\documentclass{amsart}
\usepackage{amsmath}
\usepackage{amsfonts}
\usepackage{graphicx}
\usepackage{enumitem}

\usepackage{subfigure}
\usepackage{multirow}
\usepackage{array}
\usepackage{color}

\setcounter{MaxMatrixCols}{10}

\newtheorem{theorem}{Theorem}

\newtheorem{axiom}[theorem]{Axiom}

\newtheorem{conjecture}[theorem]{Conjecture}
\newtheorem{corollary}[theorem]{Corollary}

\newtheorem{definition}[theorem]{Definition}
\newtheorem{example}[theorem]{Example}
\newtheorem{exercise}[theorem]{Exercise}
\newtheorem{lemma}[theorem]{Lemma}

\newtheorem{proposition}[theorem]{Proposition}
\newtheorem{remark}[theorem]{Remark}

\typeout{TCILATEX Macros for Scientific Word 2.5 <22 Dec 95>.}
\typeout{NOTICE:  This macro file is NOT proprietary and may be 
freely copied and distributed.}
\makeatletter
%
\newcount\@hour\newcount\@minute\chardef\@x10\chardef\@xv60
\def\tcitime{
\def\@time{%
  \@minute\time\@hour\@minute\divide\@hour\@xv
  \ifnum\@hour<\@x 0\fi\the\@hour:%
  \multiply\@hour\@xv\advance\@minute-\@hour
  \ifnum\@minute<\@x 0\fi\the\@minute
  }}%

\@ifundefined{hyperref}{}{}

\@ifundefined{qExtProgCall}{\def\qExtProgCall#1#2#3#4#5#6{\relax}}{}
%
%
%
%
\def\QCTOpt[#1]#2{%
  \def\QCTOptB{#1}
  \def\QCTOptA{#2}
}
\def\QCTNOpt#1{%
  \def\QCTOptA{#1}
  \let\QCTOptB\empty
}
\def\Qct{%
  \@ifnextchar[{%
    \QCTOpt}{\QCTNOpt}
}
\def\QCBOpt[#1]#2{%
  \def\QCBOptB{#1}
  \def\QCBOptA{#2}
}
\def\QCBNOpt#1{%
  \def\QCBOptA{#1}
  \let\QCBOptB\empty
}
\def\Qcb{%
  \@ifnextchar[{%
    \QCBOpt}{\QCBNOpt}
}
\def\PrepCapArgs{%
  \ifx\QCBOptA\empty
    \ifx\QCTOptA\empty
      {}%
    \else
      \ifx\QCTOptB\empty
        {\QCTOptA}%
      \else
        [\QCTOptB]{\QCTOptA}%
      \fi
    \fi
  \else
    \ifx\QCBOptA\empty
      {}%
    \else
      \ifx\QCBOptB\empty
        {\QCBOptA}%
      \else
        [\QCBOptB]{\QCBOptA}%
      \fi
    \fi
  \fi
}
\newcount\GRAPHICSTYPE
\GRAPHICSTYPE=\z@
\def\GRAPHICSPS#1{%
 \ifcase\GRAPHICSTYPE
   \special{ps: #1}%
 \or
   \special{language "PS", include "#1"}%
 \fi
}%
%
%
%
\def\graffile#1#2#3#4{%
    \leavevmode
    \raise -#4 \BOXTHEFRAME{%
        \hbox to #2{\raise #3\hbox to #2{\null #1\hfil}}}%
}%
%
\def\draftbox#1#2#3#4{%
 \leavevmode\raise -#4 \hbox{%
  \frame{\rlap{\protect\tiny #1}\hbox to #2%
   {\vrule height#3 width\z@ depth\z@\hfil}%
  }%
 }%
}%
\newcount\draft
\draft=\z@

\newif\ifwasdraft
\wasdraftfalse

\def\GRAPHIC#1#2#3#4#5{%
 \ifnum\draft=\@ne\draftbox{#2}{#3}{#4}{#5}%
  \else\graffile{#1}{#3}{#4}{#5}%
  \fi
 }%
\def\addtoLaTeXparams#1{%
    \edef\LaTeXparams{\LaTeXparams #1}}%
%

\newif\ifBoxFrame \BoxFramefalse
\newif\ifOverFrame \OverFramefalse
\newif\ifUnderFrame \UnderFramefalse

\def\BOXTHEFRAME#1{%
   \hbox{%
      \ifBoxFrame
         \frame{#1}%
      \else
         {#1}%
      \fi
   }%
}

\def\doFRAMEparams#1{\BoxFramefalse\OverFramefalse\UnderFramefalse\readFRAMEparams#1\end}%
\def\readFRAMEparams#1{%
 \ifx#1\end%
  \let\next=\relax
  \else
  \ifx#1i\dispkind=\z@\fi
  \ifx#1d\dispkind=\@ne\fi
  \ifx#1f\dispkind=\tw@\fi
  \ifx#1t\addtoLaTeXparams{t}\fi
  \ifx#1b\addtoLaTeXparams{b}\fi
  \ifx#1p\addtoLaTeXparams{p}\fi
  \ifx#1h\addtoLaTeXparams{h}\fi
  \ifx#1X\BoxFrametrue\fi
  \ifx#1O\OverFrametrue\fi
  \ifx#1U\UnderFrametrue\fi
  \ifx#1w
    \ifnum\draft=1\wasdrafttrue\else\wasdraftfalse\fi
    \draft=\@ne
  \fi
  \let\next=\readFRAMEparams
  \fi
 \next
 }%
%

\def\IFRAME#1#2#3#4#5#6{%
      \bgroup
      \let\QCTOptA\empty
      \let\QCTOptB\empty
      \let\QCBOptA\empty
      \let\QCBOptB\empty
      #6%
      \parindent=0pt%
      \leftskip=0pt
      \rightskip=0pt
      \setbox0 = \hbox{\QCBOptA}%
      \@tempdima = #1\relax
      \ifOverFrame
          \typeout{This is not implemented yet}%
          \show\HELP
      \else
         \ifdim\wd0>\@tempdima
            \advance\@tempdima by \@tempdima
            \ifdim\wd0 >\@tempdima
               \textwidth=\@tempdima
               \setbox1 =\vbox{%
                  \noindent\hbox to \@tempdima{\hfill\GRAPHIC{#5}{#4}{#1}{#2}{#3}\hfill}\\%
                  \noindent\hbox to \@tempdima{\parbox[b]{\@tempdima}{\QCBOptA}}%
               }%
               \wd1=\@tempdima
            \else
               \textwidth=\wd0
               \setbox1 =\vbox{%
                 \noindent\hbox to \wd0{\hfill\GRAPHIC{#5}{#4}{#1}{#2}{#3}\hfill}\\%
                 \noindent\hbox{\QCBOptA}%
               }%
               \wd1=\wd0
            \fi
         \else
            \ifdim\wd0>0pt
              \hsize=\@tempdima
              \setbox1 =\vbox{%
                \unskip\GRAPHIC{#5}{#4}{#1}{#2}{0pt}%
                \break
                \unskip\hbox to \@tempdima{\hfill \QCBOptA\hfill}%
              }%
              \wd1=\@tempdima
           \else
              \hsize=\@tempdima
              \setbox1 =\vbox{%
                \unskip\GRAPHIC{#5}{#4}{#1}{#2}{0pt}%
              }%
              \wd1=\@tempdima
           \fi
         \fi
         \@tempdimb=\ht1
         \advance\@tempdimb by \dp1
         \advance\@tempdimb by -#2%
         \advance\@tempdimb by #3%
         \leavevmode
         \raise -\@tempdimb \hbox{\box1}%
      \fi
      \egroup%
}%
%
\def\DFRAME#1#2#3#4#5{%
 \begin{center}
     \let\QCTOptA\empty
     \let\QCTOptB\empty
     \let\QCBOptA\empty
     \let\QCBOptB\empty
     \ifOverFrame 
        #5\QCTOptA\par
     \fi
     \GRAPHIC{#4}{#3}{#1}{#2}{\z@}
     \ifUnderFrame 
        \nobreak\par #5\QCBOptA
     \fi
 \end{center}%
 }%
%
\def\FFRAME#1#2#3#4#5#6#7{%
 \begin{figure}[#1]%
  \let\QCTOptA\empty
  \let\QCTOptB\empty
  \let\QCBOptA\empty
  \let\QCBOptB\empty
  \ifOverFrame
    #4
    \ifx\QCTOptA\empty
    \else
      \ifx\QCTOptB\empty
        \caption{\QCTOptA}%
      \else
        \caption[\QCTOptB]{\QCTOptA}%
      \fi
    \fi
    \ifUnderFrame\else
      \label{#5}%
    \fi
  \else
    \UnderFrametrue%
  \fi
  \begin{center}\GRAPHIC{#7}{#6}{#2}{#3}{\z@}\end{center}%
  \ifUnderFrame
    #4
    \ifx\QCBOptA\empty
      \caption{}%
    \else
      \ifx\QCBOptB\empty
        \caption{\QCBOptA}%
      \else
        \caption[\QCBOptB]{\QCBOptA}%
      \fi
    \fi
    \label{#5}%
  \fi
  \end{figure}%
 }%
%
%
%
%
%
\newcount\dispkind%

\def\makeactives{
  \catcode`\"=\active
  \catcode`\;=\active
  \catcode`\:=\active
  \catcode`\'=\active
  \catcode`\~=\active
}
\bgroup
   \makeactives
   \gdef\activesoff{%
      \def"{\string"}
      \def;{\string;}
      \def:{\string:}
      \def'{\string'}
      \def~{\string~}
    }
\egroup

\def\FRAME#1#2#3#4#5#6#7#8{%
 \bgroup
 \@ifundefined{bbl@deactivate}{}{\activesoff}
 \ifnum\draft=\@ne
   \wasdrafttrue
 \else
   \wasdraftfalse%
 \fi
 \def\LaTeXparams{}%
 \dispkind=\z@
 \def\LaTeXparams{}%
 \doFRAMEparams{#1}%
 \ifnum\dispkind=\z@\IFRAME{#2}{#3}{#4}{#7}{#8}{#5}\else
  \ifnum\dispkind=\@ne\DFRAME{#2}{#3}{#7}{#8}{#5}\else
   \ifnum\dispkind=\tw@
    \edef\@tempa{\noexpand\FFRAME{\LaTeXparams}}%
    \@tempa{#2}{#3}{#5}{#6}{#7}{#8}%
    \fi
   \fi
  \fi
  \ifwasdraft\draft=1\else\draft=0\fi{}%
  \egroup
 }%
%

\def\TEXUX#1{"texux"}

%
%
%
%
%
%
%
\def\func#1{\mathop{\rm #1}}%
%

%
\long\def\QQQ#1#2{%
     \long\expandafter\def\csname#1\endcsname{#2}}%
\@ifundefined{QTP}{\def\QTP#1{}}{}
\@ifundefined{QEXCLUDE}{\def\QEXCLUDE#1{}}{}
\@ifundefined{Qlb}{}{}
\@ifundefined{Qlt}{}{}
\long\def\QQA#1#2{}%
\def\QTR#1#2{{\csname#1\endcsname #2}}
\def\EXPAND#1[#2]#3{}%
\def\NOEXPAND#1[#2]#3{}%
\def\LaTeXparent#1{}%
\def\ChildStyles#1{}%
\def\ChildDefaults#1{}%
\def\QTagDef#1#2#3{}%
%
\@ifundefined{StyleEditBeginDoc}{}{}
%
\def\QQfnmark#1{\footnotemark}

%
\def\makeatletter\input gnuindex.sty\makeatother\makeindex{\makeatletter\input gnuindex.sty\makeatother\makeindex}%
\@ifundefined{INDEX}{\def\INDEX#1#2{}{}}{}%
\@ifundefined{SUBINDEX}{\def\SUBINDEX#1#2#3{}{}{}}{}%
\@ifundefined{initial}%
   {\def\initial#1{\bigbreak{\raggedright\large\bf #1}\kern 2\p@\penalty3000}}%
   {}%
\@ifundefined{entry}{}{}%
\@ifundefined{primary}{}{}%
\@ifundefined{secondary}{}{}%
\@ifundefined{ZZZ}{}{\makeatletter\input gnuindex.sty\makeatother\makeindex\makeatletter}%
%
\@ifundefined{abstract}{%
 \def\abstract{%
  \if@twocolumn
   \section*{Abstract (Not appropriate in this style!)}%
   \else \small 
   \begin{center}{\bf Abstract\vspace{-.5em}\vspace{\z@}}\end{center}%
   \quotation 
   \fi
  }%
 }{%
 }%
\@ifundefined{endabstract}{\def\endabstract
  {\if@twocolumn\else\endquotation\fi}}{}%
\@ifundefined{maketitle}{\def\maketitle#1{}}{}%
\@ifundefined{affiliation}{\def\affiliation#1{}}{}%
\@ifundefined{proof}{}{}%
\@ifundefined{endproof}{}{}%
\@ifundefined{newfield}{\def\newfield#1#2{}}{}%
\@ifundefined{chapter}{\def\chapter#1{\par(Chapter head:)#1\par }%
 \newcount\c@chapter}{}%
\@ifundefined{part}{\def\part#1{\par(Part head:)#1\par }}{}%
\@ifundefined{section}{\def\section#1{\par(Section head:)#1\par }}{}%
\@ifundefined{subsection}{\def\subsection#1%
 {\par(Subsection head:)#1\par }}{}%
\@ifundefined{subsubsection}{\def\subsubsection#1%
 {\par(Subsubsection head:)#1\par }}{}%
\@ifundefined{paragraph}{\def\paragraph#1%
 {\par(Subsubsubsection head:)#1\par }}{}%
\@ifundefined{subparagraph}{\def\subparagraph#1%
 {\par(Subsubsubsubsection head:)#1\par }}{}%
\@ifundefined{therefore}{}{}%
\@ifundefined{backepsilon}{}{}%
\@ifundefined{yen}{}{}%
\@ifundefined{registered}{%
   \def\registered{\relax\ifmmode{}\r@gistered
                    \else$\m@th\r@gistered$\fi}%
 \def\r@gistered{^{\ooalign
  {\hfil\raise.07ex\hbox{$\scriptstyle\rm\text{R}$}\hfil\crcr
  \mathhexbox20D}}}}{}%
\@ifundefined{Eth}{}{}%
\@ifundefined{eth}{}{}%
\@ifundefined{Thorn}{}{}%
\@ifundefined{thorn}{}{}%
%
\@ifundefined{degree}{}{}%
%
\newdimen\theight
\def\Column{%
 \vadjust{\setbox\z@=\hbox{\scriptsize\quad\quad tcol}%
  \theight=\ht\z@\advance\theight by \dp\z@\advance\theight by \lineskip
  \kern -\theight \vbox to \theight{%
   \rightline{\rlap{\box\z@}}%
   \vss
   }%
  }%
 }%
\def\qed{%
 \ifhmode\unskip\nobreak\fi\ifmmode\ifinner\else\hskip5\p@\fi\fi
 \hbox{\hskip5\p@\vrule width4\p@ height6\p@ depth1.5\p@\hskip\p@}%
 }%
\def\miss{\hbox{\vrule height2\p@ width 2\p@ depth\z@}}%
%
%
\def\tcol#1{{\baselineskip=6\p@ \vcenter{#1}} \Column}  %
%
%
%
%
%

\def\newfmtname{LaTeX2e}
\def\chkcompat{%
   \if@compatibility
   \else
     \usepackage{latexsym}
   \fi
}

\ifx\fmtname\newfmtname
  \DeclareOldFontCommand{\rm}{\normalfont\rmfamily}{\mathrm}
  \DeclareOldFontCommand{\sf}{\normalfont\sffamily}{\mathsf}
  \DeclareOldFontCommand{\tt}{\normalfont\ttfamily}{\mathtt}
  \DeclareOldFontCommand{\bf}{\normalfont\bfseries}{\mathbf}
  \DeclareOldFontCommand{\it}{\normalfont\itshape}{\mathit}
  \DeclareOldFontCommand{\sl}{\normalfont\slshape}{\@nomath\sl}
  \DeclareOldFontCommand{\sc}{\normalfont\scshape}{\@nomath\sc}
  \chkcompat
\fi

%

\def\alpha{\Greekmath 010B }%
\def\beta{\Greekmath 010C }%
\def\gamma{\Greekmath 010D }%
\def\delta{\Greekmath 010E }%
\def\epsilon{\Greekmath 010F }%
\def\zeta{\Greekmath 0110 }%
\def\eta{\Greekmath 0111 }%
\def\theta{\Greekmath 0112 }%
\def\iota{\Greekmath 0113 }%
\def\kappa{\Greekmath 0114 }%
\def\lambda{\Greekmath 0115 }%
\def\mu{\Greekmath 0116 }%
\def\nu{\Greekmath 0117 }%
\def\xi{\Greekmath 0118 }%
\def\pi{\Greekmath 0119 }%
\def\rho{\Greekmath 011A }%
\def\sigma{\Greekmath 011B }%
\def\tau{\Greekmath 011C }%
\def\upsilon{\Greekmath 011D }%
\def\phi{\Greekmath 011E }%
\def\chi{\Greekmath 011F }%
\def\psi{\Greekmath 0120 }%
\def\omega{\Greekmath 0121 }%
\def\varepsilon{\Greekmath 0122 }%
\def\vartheta{\Greekmath 0123 }%
\def\varpi{\Greekmath 0124 }%
\def\varrho{\Greekmath 0125 }%
\def\varsigma{\Greekmath 0126 }%
\def\varphi{\Greekmath 0127 }%

\def\nabla{\Greekmath 0272 }
\def\FindBoldGroup{%
   {\setbox0=\hbox{$\mathbf{x\global\edef\theboldgroup{\the\mathgroup}}$}}%
}

\def\Greekmath#1#2#3#4{%
    \if@compatibility
        \ifnum\mathgroup=\symbold
           \mathchoice{\mbox{\boldmath$\displaystyle\mathchar"#1#2#3#4$}}%
                      {\mbox{\boldmath$\textstyle\mathchar"#1#2#3#4$}}%
                      {\mbox{\boldmath$\scriptstyle\mathchar"#1#2#3#4$}}%
                      {\mbox{\boldmath$\scriptscriptstyle\mathchar"#1#2#3#4$}}%
        \else
           \mathchar"#1#2#3#4%
        \fi 
    \else 
        \FindBoldGroup
        \ifnum\mathgroup=\theboldgroup 
           \mathchoice{\mbox{\boldmath$\displaystyle\mathchar"#1#2#3#4$}}%
                      {\mbox{\boldmath$\textstyle\mathchar"#1#2#3#4$}}%
                      {\mbox{\boldmath$\scriptstyle\mathchar"#1#2#3#4$}}%
                      {\mbox{\boldmath$\scriptscriptstyle\mathchar"#1#2#3#4$}}%
        \else
           \mathchar"#1#2#3#4%
        \fi     	    
	  \fi}

\newif\ifGreekBold  \GreekBoldfalse
\let\SAVEPBF=\pbf
\def\pbf{\GreekBoldtrue\SAVEPBF}%

\@ifundefined{theorem}{\newtheorem{theorem}{Theorem}}{}
\@ifundefined{lemma}{}{}
\@ifundefined{corollary}{}{}
\@ifundefined{conjecture}{}{}
\@ifundefined{proposition}{\newtheorem{proposition}[theorem]{Proposition}}{}
\@ifundefined{axiom}{}{}
\@ifundefined{remark}{\newtheorem{remark}{Remark}}{}
\@ifundefined{example}{}{}
\@ifundefined{exercise}{}{}
\@ifundefined{definition}{\newtheorem{definition}{Definition}}{}

\@ifundefined{mathletters}{%
  \newcounter{equationnumber}  
  \def\mathletters{%
     \addtocounter{equation}{1}
     \edef\@currentlabel{\theequation}%
     \setcounter{equationnumber}{\c@equation}
     \setcounter{equation}{0}%
     \edef\theequation{\@currentlabel\noexpand\alph{equation}}%
  }
  
}{}

\@ifundefined{BibTeX}{%
    \def\BibTeX{{\rm B\kern-.05em{\sc i\kern-.025em b}\kern-.08em
                 T\kern-.1667em\lower.7ex\hbox{E}\kern-.125emX}}}{}%
\@ifundefined{AmS}%
    {\def\AmS{{\protect\usefont{OMS}{cmsy}{m}{n}%
                A\kern-.1667em\lower.5ex\hbox{M}\kern-.125emS}}}{}%
\@ifundefined{AmSTeX}{}{}%
%

%
%
\ifx\ds@amstex\relax
   \message{amstex already loaded}\makeatother 
\else
   \@ifpackageloaded{amstex}%
      {\message{amstex already loaded}\makeatother }
      {}
   \@ifpackageloaded{amsgen}%
      {\message{amsgen already loaded}\makeatother }
      {}
\fi
%
%
%
%
\let\DOTSI\relax
\def\RIfM@{\relax\ifmmode}%
\def\FN@{\futurelet\next}%
\newcount\intno@
\def\iint{\DOTSI\intno@\tw@\FN@\ints@}%
\def\iiint{\DOTSI\intno@\thr@@\FN@\ints@}%
\def\iiiint{\DOTSI\intno@4 \FN@\ints@}%
\def\idotsint{\DOTSI\intno@\z@\FN@\ints@}%
\def\ints@{\findlimits@\ints@@}%
\newif\iflimtoken@
\newif\iflimits@
\def\findlimits@{\limtoken@true\ifx\next\limits\limits@true
 \else\ifx\next\nolimits\limits@false\else
 \limtoken@false\ifx\ilimits@\nolimits\limits@false\else
 \ifinner\limits@false\else\limits@true\fi\fi\fi\fi}%
\def\multint@{\int\ifnum\intno@=\z@\intdots@                          
 \else\intkern@\fi                                                    
 \ifnum\intno@>\tw@\int\intkern@\fi                                   
 \ifnum\intno@>\thr@@\int\intkern@\fi                                 
 \int}
\def\multintlimits@{\intop\ifnum\intno@=\z@\intdots@\else\intkern@\fi
 \ifnum\intno@>\tw@\intop\intkern@\fi
 \ifnum\intno@>\thr@@\intop\intkern@\fi\intop}%
\def\intic@{%
    \mathchoice{\hskip.5em}{\hskip.4em}{\hskip.4em}{\hskip.4em}}%
\def\negintic@{\mathchoice
 {\hskip-.5em}{\hskip-.4em}{\hskip-.4em}{\hskip-.4em}}%
\def\ints@@{\iflimtoken@                                              
 \def\ints@@@{\iflimits@\negintic@
   \mathop{\intic@\multintlimits@}\limits                             
  \else\multint@\nolimits\fi                                          
  \eat@}
 \else                                                                
 \def\ints@@@{\iflimits@\negintic@
  \mathop{\intic@\multintlimits@}\limits\else
  \multint@\nolimits\fi}\fi\ints@@@}%
\def\intkern@{\mathchoice{\!\!\!}{\!\!}{\!\!}{\!\!}}%
\def\plaincdots@{\mathinner{\cdotp\cdotp\cdotp}}%
\def\intdots@{\mathchoice{\plaincdots@}%
 {{\cdotp}\mkern1.5mu{\cdotp}\mkern1.5mu{\cdotp}}%
 {{\cdotp}\mkern1mu{\cdotp}\mkern1mu{\cdotp}}%
 {{\cdotp}\mkern1mu{\cdotp}\mkern1mu{\cdotp}}}%
%
%
%
\def\RIfM@{\relax\protect\ifmmode}
\def\text{\RIfM@\expandafter\text@\else\expandafter\mbox\fi}
\let\nfss@text\text
\def\text@#1{\mathchoice
   {\textdef@\displaystyle\f@size{#1}}%
   {\textdef@\textstyle\tf@size{\firstchoice@false #1}}%
   {\textdef@\textstyle\sf@size{\firstchoice@false #1}}%
   {\textdef@\textstyle \ssf@size{\firstchoice@false #1}}%
   \glb@settings}

\def\textdef@#1#2#3{\hbox{{%
                    \everymath{#1}%
                    \let\f@size#2\selectfont
                    #3}}}
\newif\iffirstchoice@
\firstchoice@true
%
%
%
%
%
\def\Let@{\relax\iffalse{\fi\let\\=\cr\iffalse}\fi}%
\def\vspace@{\def\vspace##1{\crcr\noalign{\vskip##1\relax}}}%
\def\multilimits@{\bgroup\vspace@\Let@
 \baselineskip\fontdimen10 \scriptfont\tw@
 \advance\baselineskip\fontdimen12 \scriptfont\tw@
 \lineskip\thr@@\fontdimen8 \scriptfont\thr@@
 \lineskiplimit\lineskip
 \vbox\bgroup\ialign\bgroup\hfil$\m@th\scriptstyle{##}$\hfil\crcr}%
\def\Sb{_\multilimits@}%
\def\endSb{\crcr\egroup\egroup\egroup}%
\def\Sp{^\multilimits@}%

%
%
%
\newdimen\ex@
\ex@.2326ex
\def\rightarrowfill@#1{$#1\m@th\mathord-\mkern-6mu\cleaders
 \hbox{$#1\mkern-2mu\mathord-\mkern-2mu$}\hfill
 \mkern-6mu\mathord\rightarrow$}%
\def\leftarrowfill@#1{$#1\m@th\mathord\leftarrow\mkern-6mu\cleaders
 \hbox{$#1\mkern-2mu\mathord-\mkern-2mu$}\hfill\mkern-6mu\mathord-$}%
\def\leftrightarrowfill@#1{$#1\m@th\mathord\leftarrow
\mkern-6mu\cleaders
 \hbox{$#1\mkern-2mu\mathord-\mkern-2mu$}\hfill
 \mkern-6mu\mathord\rightarrow$}%
\def\overrightarrow{\mathpalette\overrightarrow@}%
\def\overrightarrow@#1#2{\vbox{\ialign{##\crcr\rightarrowfill@#1\crcr
 \noalign{\kern-\ex@\nointerlineskip}$\m@th\hfil#1#2\hfil$\crcr}}}%

\def\overleftarrow{\mathpalette\overleftarrow@}%
\def\overleftarrow@#1#2{\vbox{\ialign{##\crcr\leftarrowfill@#1\crcr
 \noalign{\kern-\ex@\nointerlineskip}$\m@th\hfil#1#2\hfil$\crcr}}}%
\def\overleftrightarrow{\mathpalette\overleftrightarrow@}%
\def\overleftrightarrow@#1#2{\vbox{\ialign{##\crcr
   \leftrightarrowfill@#1\crcr
 \noalign{\kern-\ex@\nointerlineskip}$\m@th\hfil#1#2\hfil$\crcr}}}%
\def\underrightarrow{\mathpalette\underrightarrow@}%
\def\underrightarrow@#1#2{\vtop{\ialign{##\crcr$\m@th\hfil#1#2\hfil
  $\crcr\noalign{\nointerlineskip}\rightarrowfill@#1\crcr}}}%

\def\underleftarrow{\mathpalette\underleftarrow@}%
\def\underleftarrow@#1#2{\vtop{\ialign{##\crcr$\m@th\hfil#1#2\hfil
  $\crcr\noalign{\nointerlineskip}\leftarrowfill@#1\crcr}}}%
\def\underleftrightarrow{\mathpalette\underleftrightarrow@}%
\def\underleftrightarrow@#1#2{\vtop{\ialign{##\crcr$\m@th
  \hfil#1#2\hfil$\crcr
 \noalign{\nointerlineskip}\leftrightarrowfill@#1\crcr}}}%


\def\qopnamewl@#1{\mathop{\operator@font#1}\nlimits@}
\let\nlimits@\displaylimits
\def\setboxz@h{\setbox\z@\hbox}

\def\varlim@#1#2{\mathop{\vtop{\ialign{##\crcr
 \hfil$#1\m@th\operator@font lim$\hfil\crcr
 \noalign{\nointerlineskip}#2#1\crcr
 \noalign{\nointerlineskip\kern-\ex@}\crcr}}}}

 \def\rightarrowfill@#1{\m@th\setboxz@h{$#1-$}\ht\z@\z@
  $#1\copy\z@\mkern-6mu\cleaders
  \hbox{$#1\mkern-2mu\box\z@\mkern-2mu$}\hfill
  \mkern-6mu\mathord\rightarrow$}
\def\leftarrowfill@#1{\m@th\setboxz@h{$#1-$}\ht\z@\z@
  $#1\mathord\leftarrow\mkern-6mu\cleaders
  \hbox{$#1\mkern-2mu\copy\z@\mkern-2mu$}\hfill
  \mkern-6mu\box\z@$}

\def\projlim{\qopnamewl@{proj\,lim}}
\def\injlim{\qopnamewl@{inj\,lim}}
\def\varinjlim{\mathpalette\varlim@\rightarrowfill@}
\def\varprojlim{\mathpalette\varlim@\leftarrowfill@}
\def\varliminf{\mathpalette\varliminf@{}}
\def\varliminf@#1{\mathop{\underline{\vrule\@depth.2\ex@\@width\z@
   \hbox{$#1\m@th\operator@font lim$}}}}
\def\varlimsup{\mathpalette\varlimsup@{}}
\def\varlimsup@#1{\mathop{\overline
  {\hbox{$#1\m@th\operator@font lim$}}}}

%
%
%
%
%
%
%
%
%
%
%
%
%
%
%
%
%
%
%
%
%
%
%

%
%
%
%
%
%
%
%
%
%
%
%
%
%
%
%
%
%
%
%
%
%

%
%
%
%
%
%
%
%
%
%
%
%
%
%
%
%
%
%
%
%
%
%
%
%
\begingroup \catcode `|=0 \catcode `[= 1
\catcode`]=2 \catcode `\{=12 \catcode `\}=12
\catcode`\\=12 
|gdef|@alignverbatim#1\end{align}[#1|end[align]]
|gdef|@salignverbatim#1\end{align*}[#1|end[align*]]

|gdef|@alignatverbatim#1\end{alignat}[#1|end[alignat]]
|gdef|@salignatverbatim#1\end{alignat*}[#1|end[alignat*]]

|gdef|@xalignatverbatim#1\end{xalignat}[#1|end[xalignat]]
|gdef|@sxalignatverbatim#1\end{xalignat*}[#1|end[xalignat*]]

|gdef|@gatherverbatim#1\end{gather}[#1|end[gather]]
|gdef|@sgatherverbatim#1\end{gather*}[#1|end[gather*]]

|gdef|@gatherverbatim#1\end{gather}[#1|end[gather]]
|gdef|@sgatherverbatim#1\end{gather*}[#1|end[gather*]]

|gdef|@multilineverbatim#1\end{multiline}[#1|end[multiline]]
|gdef|@smultilineverbatim#1\end{multiline*}[#1|end[multiline*]]

|gdef|@arraxverbatim#1\end{arrax}[#1|end[arrax]]
|gdef|@sarraxverbatim#1\end{arrax*}[#1|end[arrax*]]

|gdef|@tabulaxverbatim#1\end{tabulax}[#1|end[tabulax]]
|gdef|@stabulaxverbatim#1\end{tabulax*}[#1|end[tabulax*]]

|endgroup

\def\align{\@verbatim \frenchspacing\@vobeyspaces \@alignverbatim
You are using the "align" environment in a style in which it is not defined.}

\@namedef{align*}{\@verbatim\@salignverbatim
You are using the "align*" environment in a style in which it is not defined.}
\expandafter\let\csname endalign*\endcsname =\endtrivlist

\def\alignat{\@verbatim \frenchspacing\@vobeyspaces \@alignatverbatim
You are using the "alignat" environment in a style in which it is not defined.}

\@namedef{alignat*}{\@verbatim\@salignatverbatim
You are using the "alignat*" environment in a style in which it is not defined.}
\expandafter\let\csname endalignat*\endcsname =\endtrivlist

\def\xalignat{\@verbatim \frenchspacing\@vobeyspaces \@xalignatverbatim
You are using the "xalignat" environment in a style in which it is not defined.}

\@namedef{xalignat*}{\@verbatim\@sxalignatverbatim
You are using the "xalignat*" environment in a style in which it is not defined.}
\expandafter\let\csname endxalignat*\endcsname =\endtrivlist

\def\gather{\@verbatim \frenchspacing\@vobeyspaces \@gatherverbatim
You are using the "gather" environment in a style in which it is not defined.}

\@namedef{gather*}{\@verbatim\@sgatherverbatim
You are using the "gather*" environment in a style in which it is not defined.}
\expandafter\let\csname endgather*\endcsname =\endtrivlist

\def\multiline{\@verbatim \frenchspacing\@vobeyspaces \@multilineverbatim
You are using the "multiline" environment in a style in which it is not defined.}

\@namedef{multiline*}{\@verbatim\@smultilineverbatim
You are using the "multiline*" environment in a style in which it is not defined.}
\expandafter\let\csname endmultiline*\endcsname =\endtrivlist

\def\arrax{\@verbatim \frenchspacing\@vobeyspaces \@arraxverbatim
You are using a type of "array" construct that is only allowed in AmS-LaTeX.}

\def\tabulax{\@verbatim \frenchspacing\@vobeyspaces \@tabulaxverbatim
You are using a type of "tabular" construct that is only allowed in AmS-LaTeX.}

\@namedef{arrax*}{\@verbatim\@sarraxverbatim
You are using a type of "array*" construct that is only allowed in AmS-LaTeX.}
\expandafter\let\csname endarrax*\endcsname =\endtrivlist

\@namedef{tabulax*}{\@verbatim\@stabulaxverbatim
You are using a type of "tabular*" construct that is only allowed in AmS-LaTeX.}
\expandafter\let\csname endtabulax*\endcsname =\endtrivlist


\def\@@eqncr{\let\@tempa\relax
    \ifcase\@eqcnt \def\@tempa{& & &}\or \def\@tempa{& &}%
      \else \def\@tempa{&}\fi
     \@tempa
     \if@eqnsw
        \iftag@
           \@taggnum
        \else
           \@eqnnum\stepcounter{equation}%
        \fi
     \fi
     \global\tag@false
     \global\@eqnswtrue
     \global\@eqcnt\z@\cr}

 \def\endequation{%
     \ifmmode\ifinner 
      \iftag@
        \addtocounter{equation}{-1} 
        $\hfil
           \displaywidth\linewidth\@taggnum\egroup \endtrivlist
        \global\tag@false
        \global\@ignoretrue   
      \else
        $\hfil
           \displaywidth\linewidth\@eqnnum\egroup \endtrivlist
        \global\tag@false
        \global\@ignoretrue 
      \fi
     \else   
      \iftag@
        \addtocounter{equation}{-1} 
        \eqno \hbox{\@taggnum}
        \global\tag@false%
        $$\global\@ignoretrue
      \else
        \eqno \hbox{\@eqnnum}
        $$\global\@ignoretrue
      \fi
     \fi\fi
 } 

 \newif\iftag@ \tag@false
 
 \def\tag{\@ifnextchar*{\@tagstar}{\@tag}}
 \def\@tag#1{%
     \global\tag@true
     \global\def\@taggnum{(#1)}}
 \def\@tagstar*#1{%
     \global\tag@true
     \global\def\@taggnum{#1}%
}


\makeatother

\title[Periodic hydroelastic waves]{Periodic traveling interfacial hydroelastic waves with or without mass}

\author{Benjamin F. Akers}
\address{Department of Mathematics and Statistics,
Air Force Institute of Technology, 2950 Hobson Way, WPAFB, OH 45433 USA}
\email{benjamin.akers@afit.edu}
\thanks{This work was supported in part from a grant from the Office of Naval Research (ONR grant APSHEL to Dr. Akers).}

\author{David M. Ambrose} 
\address{Department of Mathematics, Drexel University, 3141 Chestnut Street, Philadelphia, PA 19104 USA}
\email{dma68@drexel.edu}
\thanks{Dr. Ambrose is grateful to support from the NSF through grant DMS-1515849.}

\author{David W. Sulon}
\address{Department of Mathematics, Drexel University, 3141 Chestnut Street, Philadelphia, PA 19104 USA}
\email{dws57@drexel.edu}

\begin{document}

\begin{abstract}We study the motion of an interface between two irrotational, incompressible fluids,
with elastic bending forces present; this is the hydroelastic wave problem.  We prove a global bifurcation
theorem for the existence of families of spatially periodic traveling waves on infinite depth.  Our traveling
wave formulation uses a parameterized curve, in which the waves are able
to have multi-valued height.  This formulation and the presence of the elastic bending terms allows for the 
application of an abstract global bifurcation theorem of ``identity plus compact'' type.  We furthermore perform
numerical computations of these families of traveling waves, finding that, depending on the choice of parameters,
the curves of traveling waves can either be unbounded, reconnect to trivial solutions, or end with a wave which has a 
self-intersection.  Our analytical and computational methods are  able to treat in a unified way 
the cases of positive or zero mass density
along the sheet, the cases of single-valued or multi-valued height, and the cases of single-fluid or interfacial waves.
\end{abstract}

\maketitle

\section{Introduction}

We study the motion of an elastic, frictionless membrane of non-negative mass between two irrotational,
incompressible fluids.  This is known as a hydroelastic wave problem.
Each fluid has its own non-negative density, and if one of these densities is equal to zero,
this is the hydroelastic water wave case.  Hydroelastic waves can occur in several scenarios, such as a layer of ice
above the ocean \cite{Squire1995} (for which the water wave case would be relevant), or a flapping flag in a fluid \cite{Alben2008} (for which the interfacial case would be relevant).

To model the elastic effects at the free surface, we use the Cosserat theory of elastic shells as developed and described 
by Plotnikov and Toland \cite{plotnikovToland}.  
This system is more suitable for large surface deformations than simpler models
such as linear or Kirchoff-Love models.  The second author, Siegel, and Liu have shown that the initial
value problems for these Cosserat-type hydroelastic waves are well-posed in Sobolev spaces \cite{ambroseSiegelWPHydro},
\cite{LiuAmbroseWellPosednessMass}.  
Toland and Baldi and Toland have proved existence of periodic traveling hydroelastic water
waves with and without mass including studying secondary bifurcations \cite{toland1}, \cite{toland2}, \cite{baldiToland1}, \cite{baldiToland2}.  A number of authors have also computed traveling hydroelastic water waves, finding results
in 2D and 3D, computations of periodic and solitary waves, comparison with weakly nonlinear models, and comparison
across different modelling assumptions for the bending force 
\cite{guyenneParauJFM}, \cite{guyenneParauFS},   \cite{MVBWJFM}, \cite{MVBWPRSA469}, 
\cite{milewskiWangSAM}, \cite{WPMVBPRSA470}, 
 \cite{WVBMIMA}.
While we believe these computations of hydroelastic water waves are the most relevant such studies to the present work, 
this is not an exhaustive list, and the interested reader is encouraged to consult these papers for further references.

We use the formulation for traveling waves introduced by two of the authors and Wright 
\cite{AkersAmbroseWrightTravelingWavesSurfaceTension}.
This version of the traveling wave ansatz is valid for a traveling parameterized curve, and thus extreme behavior
of the waves, such as overturning, may be studied.  Furthermore, while the present study concerns waves in 
two-dimensional fluids, the formulation based on a traveling parameterized curve extends to the case of a
traveling parameterized surface in three space dimensions.  Thus, this method of allowing for overturning waves
generalizes to the higher-dimensional case, unlike methods
based on complex analysis; this has been carried out in one case already \cite{akersReeger}.

In \cite{AkersAmbroseWrightTravelingWavesSurfaceTension},
the density-matched vortex sheet with surface tension was studied.  The particular results  in 
\cite{AkersAmbroseWrightTravelingWavesSurfaceTension} are that the 
formulation for a traveling parameterized curve was introduced and was used to prove a local bifurcation theorem,
and families of waves were computed, showing that curves of traveling waves typically ended when a self-intersecting
wave was reached.  Subsequently, Akers, Ambrose, and Wright showed that Crapper waves, a family of 
exact pure capillary traveling water waves, could be perturbed by including the effect of gravity, and the 
formulation was again used to compute these waves \cite{akersAmbroseWright2}.
Further numerical results were demonstrated in \cite{akersAmbrosePondWright}, where the non-density-matched
vortex sheet was considered.  The formulation was also used to prove a global bifurcation theorem 
for vortex sheets with surface tension for arbitrary constant densities, and thus including the water wave case
\cite{AmbroseStraussWrightGlobBifurc}.
 
We give details of this formulation in Section 2 below, after first stating the
evolution equations for the hydroelastic wave problem.  While the evolution equations, and thus the traveling 
wave equations, are different in the cases with and without mass (i.e., the case of zero mass density or positive 
mass density along the elastic sheet), this difference goes away when applying the abstract global bifurcation
theorem.  This is because the terms involving the mass parameter are nonlinear, and vanish when linearizing
about equilibrium.  We are therefore able to treat the two cases simultaneously in the analysis.

The abstract bifurcation theorem we apply requires a one-dimensional kernel in our linearized operator.
For certain values of the parameters, there may instead be a two-dimensional kernel.  The authors will 
treat the cases of two-dimensional kernels in a subsequent paper.  This will involve studying secondary
bifurcations as in \cite{baldiToland1}, and also studying Wilton ripples \cite{reederShinbrot1}, \cite{reederShinbrot2},
\cite{wilton}, \cite{akers2012wilton}, \cite{trichtchenko2016instability}.

 In Section 3 we state and prove our main theorem, which is a global 
bifurcation theorem for periodic traveling hydroelastic waves, giving several conditions for how a
curve of such waves might end.  In Section 4, we describe our numerical method for computing 
curves of traveling waves, and we give numerical results.

\section{Governing equations}

\subsection{Equations of motion}
The setup of our problem closely mirrors that of \cite{AmbroseStraussWrightGlobBifurc} and \cite{LiuAmbroseWellPosednessMass}.  We consider two two-dimensional fluids, each of which may possess a different mass density.  A one-dimensional interface $I$ (a free surface) comprises the boundary between these two fluid regions; one fluid (with density $\rho_1\geq 0$)  lies below $I$, while the other (with density $\rho_2\geq 0$) lies above $I$.  The fluid regions are infinite in the vertical direction, and are periodic in the horizontal direction.  In our model, we allow the interface itself to possess non-negative mass density $\rho$.  Our model also includes the effects of hydroelasticity and surface tension on the interface; these will be presented later in this section as we introduce the full equations of motion.

Within the interior of each fluid region, the fluid's velocity $u$ is governed by the irrotational, incompressible Euler equations:
\begin{eqnarray*}
u_t + u \cdot \nabla u &=& - \nabla p, \\
\func{div} \left( u \right) &=& 0, \\
u &=& \nabla \phi;
\end{eqnarray*}
however, since $u$ may jump across $I$, there is may still be measure-valued vorticity whose support is $I$.  We can write this vorticity in the form $\gamma \,  \delta_{I}$, where $\gamma \in \mathbb{R}$ (which may vary along $I$) is called the ``unnormalized vortex sheet-strength," and $\delta_{I}$ is the Dirac mass of $I$  \cite{AmbroseStraussWrightGlobBifurc}.

Identifying our overall region with $\mathbb{C}$, we parametrize $I$ as a curve $z\left(
\alpha ,t\right) =x\left( \alpha ,t\right) +iy\left( \alpha ,t\right)$ with periodicity conditions 
\begin{subequations}
\begin{eqnarray}
x\left( \alpha +2\pi ,t\right) &=&x\left( a,t\right) +M,
\label{periodicity_conditions_x} \\
y\left( \alpha +2\pi ,t\right) &=&y\left( \alpha ,t\right),
\label{periodicity_conditions_y}
\end{eqnarray}%
for some $M>0$ (throughout, $\alpha$ will be our spatial parameter along the interface, and $t$ will represent time). \ Let $U$
and $V$ denote the normal and tangential velocities,
respectively; i.e. 
\end{subequations}
\begin{equation}
z_{t}=UN+VT,  \label{position_equation}
\end{equation}%
where 
\begin{eqnarray}
T &=&\frac{z_{\alpha }}{s_{\alpha }}, \notag \\ 
N &=&i\frac{z_{\alpha }}{s_{\alpha }}, \notag \\ 
s_{\alpha }^{2} &=&\left\vert z_{\alpha }\right\vert ^{2}=x_{\alpha
}^{2}+y_{\alpha }^{2}.  \label{s_alpha_definition}
\end{eqnarray}%
(Notice that $T$ and $N$ are the complex versions of the unit tangent and upward normal
vectors to the curve.)  We choose a normalized arclength parametrization;
i.e. we enforce 
\begin{equation}
s_{\alpha }=\sigma \left( t\right) :=\frac{L\left( t\right) }{2\pi }
\label{normalized_arclength_param}
\end{equation}%
at all times $t$, where $L\left( t\right) $ is the length of one period of
the interface. \ Thus, in our parametrization, $s_{\alpha }$ is constant
with respect to $\alpha $. \ Furthermore, we define the tangent angle%
\begin{equation*}
\theta =\arctan \left( \frac{y_{\alpha }}{x_{\alpha }}\right) ;
\end{equation*}%
it is clear that the curve $z$ can be constructed (up to one point) from information about $%
\theta $ and $\sigma $, and that curvature of the interface $\kappa $ can be
given as%
\begin{equation*}
\kappa =\frac{\theta _{\alpha }}{s_{\alpha }}.
\end{equation*}%

The normal velocity $U$ (a geometric invariant) is determined entirely by
the Birkhoff-Rott integral:%
\begin{equation}
U=\func{Re}\left( W^{\ast }N\right) ,  \label{normal_equation}
\end{equation}%
\newline
where%
\begin{equation}
W^{\ast }\left( \alpha ,t\right) =\frac{1}{2\pi i}\func{PV}\int\nolimits_{%
\mathbb{R}
}\frac{\gamma \left( \alpha ^{\prime },t\right) }{z\left( \alpha ,t\right)
-z\left( \alpha ^{\prime },t\right) }d\alpha ^{\prime }.
\label{W_star_definition}
\end{equation}%
We are free to choose the tangential velocity $V$ in order to enforce our
parametrization (\ref{normalized_arclength_param}).  Explicitly, we choose
periodic $V$ such that%
\begin{equation}
V_{\alpha }=\theta _{\alpha }U-\frac{1}{2\pi }\int\nolimits_{0}^{2\pi
}\theta _{\alpha }U\text{ }d\alpha .  \label{V_explicit}
\end{equation}%
We can differentiate both sides of (\ref{s_alpha_definition}) by $t$, and
obtain 
\begin{equation*}
s_{\alpha t}=V_{\alpha }-\theta _{\alpha }U;
\end{equation*}
using this and (\ref{V_explicit}), we can then write 
\begin{equation}\label{s_alpha_t_expression} 
s_{\alpha t} =-\frac{1}{2\pi }\int\nolimits_{0}^{2\pi }\theta _{\alpha }U
\text{ }d\alpha 
=\frac{1}{2\pi }\int\nolimits_{0}^{2\pi }\left( s_{\alpha t}-V_{\alpha
}\right) \text{ }d\alpha
=\frac{1}{2\pi }\int\nolimits_{0}^{2\pi }s_{\alpha t}\text{ }d\alpha. 
\end{equation}
Note that the last step is justified since $V$ is periodic. \ But, we also have
\begin{equation*}
L=\int\nolimits_{0}^{2\pi }s_{\alpha }\text{ }d\alpha ,
\end{equation*}%
\newline
so (\ref{s_alpha_t_expression}) reduces to%
\begin{equation*}
s_{\alpha t}=\frac{L_{t}}{2\pi },
\end{equation*}%
which yields (\ref{normalized_arclength_param}) for all times $%
t$ as long as (\ref{normalized_arclength_param}) holds at $t=0$. \ 

The evolution of the interface is also determined by the behavior of the vortex sheet-strength $\gamma \left( \alpha ,t\right) $,
which can be written in terms of the jump in tangential velocity across the
surface. \ Using a model which combines those used in \cite{ambroseSiegelWPHydro} and \cite{LiuAmbroseWellPosednessMass}, we assume the jump in pressure across the interface to be 
\begin{equation*}
\left[ \left[ p\right] \right] =\rho \left(\func{Re}\left(W_t^\ast N\right) + V_W \theta_t \right) +  \frac{1}{2} E_{b}\left( \kappa _{ss}+\frac{\kappa ^{3}}{2%
}-\tau _{1}\kappa \right) + g \rho \func{Im}N,
\end{equation*}%
where $\rho \geq 0$ is the mass density of the interface, $V_W := V - \func{Re} \left( W^{\ast} T \right)$, $E_{b} \geq 0$ is the bending modulus, $\tau _{1} > 0$ is a surface tension parameter, and $g$ is acceleration due to gravity.  Then, we can write an equation for $\gamma _{t}$ \cite{LiuAmbroseWellPosednessMass}:%
\begin{eqnarray}
\;\;\;\; \gamma _{t} &=&-\frac{\widetilde{S}}{\sigma ^{3}}\left( \partial _{\alpha
}^{4}\theta +\frac{3\theta _{\alpha }^{2}\theta _{\alpha \alpha }}{2}-\tau
_{1}\sigma ^{2}\theta _{\alpha \alpha }\right) +\frac{\left( V_{W}\gamma
\right) _{\alpha }}{\sigma }-2\widetilde{A}\left( \func{Re}\left( W_{\alpha
t}^{\ast }N\right) \right) \label{penultimateGammatEqn}\\
&&-\left( 2A-\frac{2\widetilde{A}\theta _{\alpha }}{%
\sigma }\right) \left( \func{Re}\left( W_{t}^{\ast }T\right) \right)
s_{\alpha } -2\widetilde{A}\left( \left( V_{W}\right) _{\alpha }\theta
_{t}+V_{W}\theta _{t\alpha }+\frac{gx_{\alpha \alpha }}{\sigma }\right) \notag \\
&&-2A\left( \frac{\gamma \gamma _{\alpha }}{4\sigma ^{2}}-V_{W}\func{Re}\left(
W_{\alpha }^{\ast }T\right) +gy_{\alpha }\right). \notag
\end{eqnarray}%
In addition to those defined above, equation (\ref{penultimateGammatEqn}) includes the following constant quantities; some are listed with their physical meanings:
\begin{eqnarray*}
\rho_1 &:& \text{density of the lower fluid } (\, \geq 0), \\
\rho_2 &:& \text{density of the upper fluid } (\, \geq 0), \\
\widetilde{S} &:=&\frac{E_{b}}{\rho _{1}+\rho _{2}}\,  (\, \geq 0), \\
A &:=&\frac{\rho _{1}-\rho _{2}}{\rho _{1}+\rho _{2}} \, \left(\text{the ``Atwood
number,"} \, \in [-1,1]\right), \\
\widetilde{A} &:=&\frac{\rho }{\rho _{1}+\rho _{2}}\, (\, \geq 0).
\end{eqnarray*}%
We can nondimensionalize, and write (\ref{penultimateGammatEqn}) in the
form 
\begin{eqnarray}
\;\;\;\;\;\; \gamma _{t} &=&-\frac{S}{\sigma ^{3}}\left( \partial _{\alpha }^{4}\theta +%
\frac{3\theta _{\alpha }^{2}\theta _{\alpha \alpha }}{2}-\tau _{1}\sigma
^{2}\theta _{\alpha \alpha }\right) +\frac{\left( V_{W}\gamma \right)
_{\alpha }}{\sigma }  \label{tangential_equation} \\
&&-2\widetilde{A}\left( \func{Re}\left( W_{\alpha t}^{\ast }N\right) \right)
-\left( 2A-\frac{2\widetilde{A}\theta _{\alpha }}{\sigma }\right) \left( 
\func{Re}\left( W_{t}^{\ast }T\right) \right) s_{\alpha }  \notag \\
&&-2\widetilde{A}\left( \left( V_{W}\right) _{\alpha }\theta
_{t}+V_{W}\theta _{t\alpha }+\frac{x_{\alpha \alpha }}{\sigma }\right)
-2A\left( \frac{\gamma \gamma _{\alpha }}{4\sigma ^{2}}-V_{W}\func{Re}\left(
W_{\alpha }^{\ast }T\right) +y_{\alpha }\right) ,  \notag
\end{eqnarray}%
where $S = \widetilde{S} \mathbin{/} \left| g \right|$. \ In the two-dimensional hydroelastic vortex sheet problem with
mass, the interface's motion is thus governed by (\ref{position_equation}), (%
\ref{normal_equation}), (\ref{W_star_definition}), (\ref{V_explicit}), and (\ref{tangential_equation}).

\subsection{Traveling wave ansatz\label{reduction_equations}}

We wish to consider traveling wave solutions to the two-dimensional
hydroelastic vortex sheet problem with mass.

\begin{definition}
Suppose $\left( z,\gamma \right) $ is a solution to (\ref{position_equation}%
), (\ref{normal_equation}), (\ref{W_star_definition}), (\ref{V_explicit}), and (\ref{tangential_equation}) that additionally
satisfies $\left( z,\gamma \right) _{t}=\left( c,0\right) $ for some real
parameter $c$. \ We then say $\left( z,\gamma \right) $ is a traveling wave
solution to (\ref{position_equation}), (\ref{normal_equation}), (\ref{W_star_definition}), (\ref{V_explicit}), and (\ref%
{tangential_equation}) with speed $c$.
\end{definition}

\begin{remark}
In our application of global bifurcation theory to show existence of
traveling wave solutions, the value $c$ will serve as our bifurcation
parameter.
\end{remark}

Note that under the traveling wave assumption, we clearly have
\begin{eqnarray*}
U &=&-c\sin \theta, \\
V &=&c\cos \theta.
\end{eqnarray*}%
By carefully differentiating under the integral (in the principal value
sense), it can be shown that under the traveling wave assumption,
both $W_{t}^{\ast }=0$ and $W_{\alpha t}^{\ast }=0$. \ Thus, both terms $2%
\widetilde{A}\left( \func{Re}\left( W_{\alpha t}^{\ast }N\right) \right) $
and $\left( 2A-\frac{2\widetilde{A}\theta _{\alpha }}{\sigma }\right) \left( 
\func{Re}\left( W_{t}^{\ast }T\right) \right) s_{\alpha }$ vanish in the
traveling wave case, and (\ref{tangential_equation}) reduces to 
\begin{eqnarray*}
0 &=&-\frac{S}{\sigma ^{3}}\left( \partial _{\alpha }^{4}\theta +\frac{%
3\theta _{\alpha }^{2}\theta _{\alpha \alpha }}{2}-\tau _{1}\sigma
^{2}\theta _{\alpha \alpha }\right) +\frac{\left( V_{W}\gamma \right)
_{\alpha }}{\sigma } \\
&&-2\widetilde{A}\left( \left( V_{W}\right) _{\alpha }\theta
_{t}+V_{W}\theta _{t\alpha }+\frac{x_{\alpha \alpha }}{\sigma }\right)
-2A\left( \frac{\gamma \gamma _{\alpha }}{4\sigma ^{2}}-V_{W}\func{Re}\left(
W_{\alpha }^{\ast }T\right) +y_{\alpha }\right),
\end{eqnarray*}
or 
\begin{eqnarray}
0 &=&-\frac{S}{\sigma ^{3}}\left( \partial _{\alpha }^{4}\theta +\frac{%
3\theta _{\alpha }^{2}\theta _{\alpha \alpha }}{2}-\tau _{1}\sigma
^{2}\theta _{\alpha \alpha }\right) +\frac{\left( V_{W}\gamma \right)
_{\alpha }}{\sigma }  \label{tan_equation2} \\
&&-2\widetilde{A}\partial _{\alpha }\left( V_{W}\theta _{t}+\frac{x_{\alpha }%
}{\sigma }\right) -2A\left( \frac{\gamma \gamma _{\alpha }}{4\sigma ^{2}}%
-V_{W}\func{Re}\left( W_{\alpha }^{\ast }T\right) +y_{\alpha }\right).  \notag
\end{eqnarray}
Note that 
\begin{equation}\nonumber
V_{W} =V-\func{Re}\left( W^{\ast }T\right) 
=c\cos \theta -\func{Re}\left( W^{\ast }T\right),
\end{equation}
and%
\begin{eqnarray*}
y_{\alpha } &=&\sigma \sin \theta, \\
x_{\alpha } &=&\sigma \cos \theta;
\end{eqnarray*}
also, $\theta _{t}$ clearly vanishes in the traveling wave case. \ We also have%
\begin{equation*}
\left( c\cos \theta -\func{Re}\left( W^{\ast }T\right) \right) \left( \func{%
Re}\left( W_{\alpha }^{\ast }T\right) \right) =-\frac{1}{2}\partial _{\alpha
}\left\{ \left( c\cos \theta -\func{Re}\left( W^{\ast }T\right) \right)
^{2}\right\} ,
\end{equation*}%
\newline
which is shown in \cite{AmbroseStraussWrightGlobBifurc}, or could be computed from the above.  
Thus, we can substitute the
above, and write (\ref{tan_equation2}) as 
\begin{multline}\label{tan_equation_for_numerical}
0 =-\frac{S}{\sigma ^{3}}\left( \partial _{\alpha }^{4}\theta +\frac{%
3\theta _{\alpha }^{2}\theta _{\alpha \alpha }}{2}-\tau _{1}\sigma
^{2}\theta _{\alpha \alpha }\right)   
-2\widetilde{A}\left( \cos \theta \right) _{\alpha }   \\
+\frac{\left( \left( c\cos \theta -\func{Re}\left( W^{\ast }T\right)
\right) \gamma \right) _{\alpha }}{\sigma }  
-A\left( \frac{\partial _{\alpha }\left( \gamma ^{2}\right) }{4\sigma ^{2}}%
+2\sigma \sin \theta +\partial _{\alpha }\left\{ \left( c\cos \theta -\func{%
Re}\left( W^{\ast }T\right) \right) ^{2}\right\} \right).   
\end{multline}
We multiply both sides by $\sigma /\tau _{1}$: 
\begin{eqnarray}
0 &=&-\frac{S}{\tau _{1}\sigma ^{2}}\left( \partial _{\alpha }^{4}\theta +%
\frac{3\theta _{\alpha }^{2}\theta _{\alpha \alpha }}{2}-\tau _{1}\sigma
^{2}\theta _{\alpha \alpha }\right) -\frac{2\widetilde{A}\sigma }{\tau _{1}}%
\left( \cos \theta \right) _{\alpha }  \label{tan_equation3} \\
&&+\frac{1}{\tau _{1}}\left( \left( c\cos \theta -\func{Re}\left( W^{\ast
}T\right) \right) \gamma \right) _{\alpha }\notag \\ 
&&-\frac{A}{\tau _{1}}\left( \frac{%
\partial _{\alpha }\left( \gamma ^{2}\right) }{4\sigma }+2\sigma ^{2}\sin
\theta +\sigma \partial _{\alpha }\left\{ \left( c\cos \theta -\func{Re}%
\left( W^{\ast }T\right) \right) ^{2}\right\} \right) .  \notag
\end{eqnarray}%
For concision, we define 
\begin{multline} \label{Phi_definition} 
\Phi \left( \theta ,\gamma ;c,\sigma \right) :=\frac{1}{\tau _{1}}\left(
\left( c\cos \theta -\func{Re}\left( W^{\ast }T\right) \right) \gamma
\right) _{\alpha } \\
-\frac{A}{\tau _{1}}\left[ \frac{\left( \gamma ^{2}\right) _{\alpha }}{%
4\sigma }+2\sigma ^{2}\sin \theta +\sigma \partial _{\alpha }\left\{ \left(
c\cos \theta -\func{Re}\left( W^{\ast }T\right) \right) ^{2}\right\} \right].
\end{multline}%
(We define $\Phi $ in this manner so that it corresponds with the mapping $\Phi$ defined in
 \cite{AmbroseStraussWrightGlobBifurc}; there, this mapping comprises all of the lower-order terms.) \ Then, (\ref{tan_equation3}) can be written as
\begin{equation*}
0=-\frac{S}{\tau _{1}\sigma ^{2}}\left( \partial _{\alpha }^{4}\theta +\frac{%
3\theta _{\alpha }^{2}\theta _{\alpha \alpha }}{2}-\tau _{1}\sigma
^{2}\theta _{\alpha \alpha }\right) -\frac{2\widetilde{A}\sigma }{\tau _{1}}%
\left( \cos \theta \right) _{\alpha }+\Phi \left( \theta ,\gamma ;c,\sigma
\right) ,
\end{equation*}%
or
\begin{equation}
0=\partial _{\alpha }^{4}\theta +\frac{3\theta _{\alpha }^{2}\theta _{\alpha
\alpha }}{2}-\tau _{1}\sigma ^{2}\theta _{\alpha \alpha }+\frac{2\widetilde{A%
}\sigma ^{3}}{S}\left( \cos \theta \right) _{\alpha }-\frac{\tau _{1}\sigma
^{2}}{S}\Phi \left( \theta ,\gamma ;c,\sigma \right) .  \label{tan_equation4}
\end{equation}%
We label the remaining lower-order terms as%
\begin{eqnarray*}
\Psi _{1}\left( \theta ;\sigma \right) &:=&\frac{3}{2}\theta _{\alpha
}^{2}\theta _{\alpha \alpha }, \\
\Psi _{2}\left( \theta ;\sigma \right) &:=&-\tau _{1}\sigma ^{2}\theta
_{\alpha \alpha }, \\
\Psi _{3}\left( \theta ;\sigma \right) &:=&\frac{2\widetilde{A}\sigma ^{3}}{S%
}\left( \cos \theta \right) _{\alpha };
\end{eqnarray*}%
note that $\Psi _{3}$ is the only remaining term that includes the effect of
interface mass. \ Combining these together as $\Psi :=\Psi _{1}+\Psi _{2}+\Psi _{3},$
we write (\ref{tan_equation4}) as%
\begin{equation}
0=\partial _{\alpha }^{4}\theta +\Psi \left( \theta ;\sigma \right) -\frac{%
\tau _{1}\sigma ^{2}}{S}\Phi \left( \theta ,\gamma ;c,\sigma \right) .
\label{tan_equation5}
\end{equation}

Recall that (\ref{normal_equation}) also determines the behavior of the interface.  Since $U=-c\sin \theta $, (\ref{normal_equation}) becomes (as in \cite{AmbroseStraussWrightGlobBifurc})
\begin{equation}
0=\func{Re}\left( W^{\ast }N\right) +c\cos \theta.
\label{normal_equation_new2}
\end{equation}

Note that (\ref{tan_equation5}) and (\ref{normal_equation_new2}) feature $z$
and $\theta $ interchangeably. \ From this point onward, we would like to
look for traveling wave solutions in the form $\left( \theta ,\gamma \right) 
$ alone; thus, it becomes important to explicitly state how to construct $z$
from $\theta $ (in a unique manner, up to rigid translation) in the
traveling wave case. \ We can clearly do this via
\begin{equation}
z\left( \alpha ,0\right) =z\left( \alpha ,t\right) -ct=\sigma
\int\nolimits_{0}^{\alpha }\exp \left( i\theta \left( \alpha ^{\prime
}\right) \right) \text{ }d\alpha ^{\prime }.  \label{z_construction}
\end{equation}%
Then, given the work completed throughout this section thus far, it is clear
that if $\left( \theta ,\gamma ;c\right) $ satisfy (\ref{tan_equation5}) and
(\ref{normal_equation_new2}) (with $z$ appearing in $\func{Re}\left( W^{\ast
}N\right) $ constructed from $\theta $ via \ref{z_construction}), then $%
\left( z,\gamma \right) $ is a traveling wave solution to (\ref%
{position_equation}), (\ref{normal_equation}) and (\ref{tangential_equation}%
) with speed $c$ (again, with $z$ constructed via (\ref{z_construction})).

It is as this point, however, that we arrive at a technical issue. \ Even if 
$\left( \theta ,\gamma ;c\right) $ yield a traveling wave solution $\left(
z,\gamma \right) $, we cannot expect that $2\pi $-periodic $\theta $ to
yield periodic $z$ via (\ref{z_construction}). \ We would like for any $2\pi $%
-periodic $\left( \theta ,\gamma \right) $ that solve some equations
analogous to (\ref{tan_equation5}) and (\ref{normal_equation_new2}) to
correspond directly to a \textit{periodic} traveling wave solution $\left(
z,\gamma \right) $ of (\ref{position_equation}), (\ref{normal_equation}), and
(\ref{tangential_equation}). \ Hence, in a manner closely analogous to
\cite{AmbroseStraussWrightGlobBifurc}, we modify the mappings in (\ref{tan_equation5})
and (\ref{normal_equation_new2}) to ensure this. \ 

\subsection{Periodicity considerations}

Throught this section, assume that $\theta $ is a sufficiently smooth, $2\pi 
$-periodic function. \ Define the following mean quantities:%
\begin{equation*}
\overline{\cos \theta }:=\frac{1}{2\pi }\int\nolimits_{0}^{2\pi }\cos
\left( \theta \left( \alpha ^{\prime }\right) \right) d\alpha ^{\prime },%
\text{ \ \ \ }\overline{\sin \theta }:=\frac{1}{2\pi }\int\nolimits_{0}^{2%
\pi }\sin \left( \theta \left( \alpha ^{\prime }\right) \right) d\alpha
^{\prime }.
\end{equation*}%
Given $M>0$ and $\theta $ with $\overline{\cos \theta }\neq 0$, define the
``renormalized curve"%
\begin{equation*}
\widetilde{Z}\left[ \theta \right] \left( \alpha \right) :=\frac{M}{2\pi 
\overline{\cos \theta }}\left[ \int\nolimits_{0}^{\alpha }\exp \left(
i\theta \left( \alpha ^{\prime }\right) \right) \text{ }d\alpha ^{\prime
}-i\alpha \overline{\sin \theta }\right] .
\end{equation*}%
Note that $\widetilde{Z}\left[ \theta \right] $ is one derivative smoother
than $\theta $, and a direct calculation shows that such a curve in fact
satisfies our original spacial periodicity requirement (\ref%
{periodicity_conditions_x}), (\ref{periodicity_conditions_y}):%
\begin{equation*}
\widetilde{Z}\left[ \theta \right] \left( \alpha +2\pi \right) =\widetilde{Z}%
\left[ \theta \right] \left( \alpha \right) +M.
\end{equation*}%
Also, we clearly have normal and tangent vectors to $\widetilde{Z}\left[ \theta %
\right] $ given by 
\begin{eqnarray*}
\widetilde{T}\left[ \theta \right] &=&\frac{\partial _{\alpha }\widetilde{Z}%
\left[ \theta \right] }{\left\vert \partial _{\alpha }\widetilde{Z}\left[
\theta \right] \right\vert }, \\
\widetilde{N}\left[ \theta \right] &=&i\frac{\partial _{\alpha }\widetilde{Z}%
\left[ \theta \right] }{\left\vert \partial _{\alpha }\widetilde{Z}\left[
\theta \right] \right\vert }.
\end{eqnarray*}%

Next, we use a specific form of the Birkhoff-Rott integral (for real-valued $%
\gamma $ and complex-valued $\omega $ that satisfy $\omega \left( \alpha
+2\pi \right) =\omega \left( \alpha \right) +M$):%
\begin{equation*}
B\left[ \omega \right] \gamma \left( \alpha \right) :=\frac{1}{2iM}\func{PV}%
\int\nolimits_{%
\mathbb{R}
}\gamma \left( \alpha ^{\prime }\right) \cot \left( \frac{\pi }{M}\left(
\omega \left( \alpha \right) -\omega \left( \alpha ^{\prime }\right) \right)
\right) d\alpha ^{\prime }.
\end{equation*}%
As discussed in \cite{AmbroseStraussWrightGlobBifurc}, setting $\omega =z$ yields $%
W^{\ast }=B\left[ z\right] \gamma $, where $W^{\ast }$ is as defined in (\ref%
{W_star_definition}); this follows from the well-known cotangent series expansion due to Mittag-Leffler (which can, for example, be found in \cite{AblowitzFokasComplexVar}). \ We are now ready to define a mapping $\widetilde{%
\Phi }$, analogous to the mapping in \cite{AmbroseStraussWrightGlobBifurc}:%
\begin{eqnarray*}
\widetilde{\Phi }\left( \theta ,\gamma ;c\right)
&:=&\frac{1}{\tau _{1}}\partial _{\alpha }\left\{ c\cos \theta -\func{Re}%
\left( \left(B\left[ \widetilde{Z}\left[ \theta \right] \right] \gamma\right)  
\widetilde{T}\left[ \theta \right] \right) \gamma \right\} \\
&&-\frac{A}{\tau _{1}}\left( \frac{\pi \overline{\cos \theta }}{2M}\partial
_{\alpha }\left( \gamma ^{2}\right) +\frac{M^{2}}{2\pi ^{2}\left( \overline{%
\cos \theta }\right) ^{2}}\left( \sin \theta -\overline{\sin \theta }\right)
\right) \\
&&-\frac{A}{\tau _{1}}\left( \frac{M}{2\pi \overline{\cos \theta }}\partial
_{\alpha }\left\{ \left( c\cos \theta -\func{Re}\left( \left(B\left[ \widetilde{Z}%
\left[ \theta \right] \right] \gamma\right) \widetilde{T}%
\left[ \theta \right] \right) \right) ^{2}\right\} \right) .
\end{eqnarray*}

This construction is enough for us to ensure $M$-periodicity in a
traveling-wave wave solution to (\ref{tan_equation5}) and (\ref%
{normal_equation_new2}):

\begin{proposition}
\label{spacial_periodic_version}Suppose $c\neq 0$ and $2\pi $-periodic
functions $\theta ,\gamma $ satisfy $\overline{\cos \theta }\neq 0$ and%
\begin{equation}
\func{Re}\left( \left(B\left[ \widetilde{Z}\left[ \theta \right] \right] \gamma\right)
 \widetilde{N}\left[ \theta \right] \right) +c\sin
\theta =0,  \label{normal_equation_spacial2}
\end{equation}
\begin{equation}
\partial _{\alpha }^{4}\theta +\Psi \left( \theta ;\sigma \right) -\frac{%
\tau _{1}\sigma ^{2}}{S}\widetilde{\Phi }\left( \theta ,\gamma ;c\right) =0,
\label{tan_equation_spacial2}
\end{equation}%
with $\sigma =M/\left( 2\pi \overline{\cos \theta }\right) $. \ Then, $%
\left( \widetilde{Z}\left[ \theta \right] \left( \alpha \right) +ct,\gamma
\left( \alpha \right) \right) $ is a traveling wave solution to (\ref%
{tan_equation5}) and (\ref{normal_equation_new2}) with speed $c$, and $%
\widetilde{Z}\left[ \theta \right] \left( \alpha \right) +ct$ is spatially
periodic with period $M$. \ 
\end{proposition}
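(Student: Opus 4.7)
The plan is to verify by direct substitution that, defining $z(\alpha,t):=\widetilde{Z}[\theta](\alpha)+ct$ and retaining $\gamma$, the pair $(z,\gamma)$ meets the two traveling-wave equations (\ref{tan_equation5}) and (\ref{normal_equation_new2}) and is spatially $M$-periodic. First I would confirm periodicity: from the definitions of $\overline{\cos\theta}$ and $\overline{\sin\theta}$, the computation
\[
\int_0^{2\pi}\!\partial_\alpha \widetilde{Z}[\theta]\,d\alpha =\frac{M}{2\pi\overline{\cos\theta}}\!\int_0^{2\pi}\!\bigl(\cos\theta+i\sin\theta-i\overline{\sin\theta}\bigr)\,d\alpha =M
\]
gives $\widetilde{Z}[\theta](\alpha+2\pi)=\widetilde{Z}[\theta](\alpha)+M$, and adding $ct$ preserves this for all $t$.

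Next I would identify the Birkhoff–Rott integral with the operator $B$. Since $z(\alpha,t)-z(\alpha',t)$ is independent of $t$ and the curve is $M$-periodic, the Mittag-Leffler partial fraction expansion of the cotangent converts the real-line principal value integral (\ref{W_star_definition}) into the sum used to define $B$, yielding $W^\ast(\alpha,t)=(B[\widetilde{Z}[\theta]]\gamma)(\alpha)$. Together with $\widetilde{T}[\theta]=\partial_\alpha\widetilde{Z}[\theta]/|\partial_\alpha\widetilde{Z}[\theta]|$ and $\widetilde{N}[\theta]=i\widetilde{T}[\theta]$, this expresses every geometric quantity entering the original equations purely in terms of $\theta$ and $\gamma$.

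The bulk of the work is a term-by-term matching of $\widetilde{\Phi}(\theta,\gamma;c)$ with $\Phi(\theta,\gamma;c,\sigma)$ under $\sigma=M/(2\pi\overline{\cos\theta})$. The elementary identifications $\pi\overline{\cos\theta}/(2M)=1/(4\sigma)$, $M^2/\bigl(2\pi^2(\overline{\cos\theta})^2\bigr)=2\sigma^2$, and $M/(2\pi\overline{\cos\theta})=\sigma$ show that the $\gamma^2$ term, the $\sin\theta$ term, and the $(c\cos\theta-\mathrm{Re}(W^\ast T))^2$ term in $\widetilde\Phi$ reproduce those in $\Phi$ up to a single $\overline{\sin\theta}$ correction. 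This correction is the compensation that must be inserted because $\partial_\alpha \widetilde{Z}[\theta]$ has imaginary part $\sigma(\sin\theta-\overline{\sin\theta})$ instead of $\sigma\sin\theta$; once inserted, the combined right-hand side of (\ref{tan_equation_spacial2}) is exactly the right-hand side of (\ref{tan_equation5}). A parallel but shorter calculation handles (\ref{normal_equation_spacial2}): after substituting $W^\ast=B[\widetilde{Z}[\theta]]\gamma$ and $N=\widetilde N[\theta]$ one recovers (\ref{normal_equation_new2}).

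The main obstacle is that the renormalized curve $\widetilde{Z}[\theta]$ is not in the normalized arclength parametrization originally assumed in Section~2.1: a direct computation gives $|\partial_\alpha\widetilde{Z}[\theta]|^2=\sigma^2\bigl(\cos^2\theta+(\sin\theta-\overline{\sin\theta})^2\bigr)$, which is $\alpha$-dependent in general. Keeping careful track of this mismatch is precisely what motivates replacing $T,N$ by $\widetilde T[\theta],\widetilde N[\theta]$ and introducing the $\overline{\sin\theta}$ subtraction in $\widetilde\Phi$. Verifying that these substitutions exactly cancel the non-arclength discrepancy — rather than introducing a residual term obstructing the reduction — is the one delicate point; once it is checked, the rest is routine bookkeeping modeled on the corresponding argument in \cite{AmbroseStraussWrightGlobBifurc}.
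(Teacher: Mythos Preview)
Your outline has the right overall shape, but it misses the one idea that actually makes the argument work and which the paper (via \cite{AmbroseStraussWrightGlobBifurc}) uses: the hypotheses force $\overline{\sin\theta}=0$. You correctly identify the ``main obstacle'' --- that $\widetilde{Z}[\theta]$ is generally not arclength-parametrized and its tangent angle is generally not $\theta$ --- but your proposed resolution, namely that the $\overline{\sin\theta}$ corrections in $\widetilde{\Phi}$ exactly cancel this discrepancy by careful bookkeeping, is not how the proof goes and would not succeed as stated. Equations (\ref{tan_equation5}) and (\ref{normal_equation_new2}) are written in terms of the tangent angle of the curve $z$; if $\overline{\sin\theta}\neq 0$, the tangent angle of $\widetilde{Z}[\theta]$ is some $\tilde\theta\neq\theta$, and there is no mechanism by which equations assumed in the variable $\theta$ would transform into the same equations in $\tilde\theta$. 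The $\overline{\sin\theta}$ subtraction in $\widetilde{\Phi}$ is not a compensating term designed to absorb a parametrization mismatch; it is there so that the gravity term has zero mean (matching the other terms, which are all $\alpha$-derivatives).

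The actual argument in \cite{AmbroseStraussWrightGlobBifurc} is: integrate (\ref{normal_equation_spacial2}) over a period; the Birkhoff--Rott contribution has zero mean, leaving $c\,\overline{\sin\theta}=0$, and since $c\neq 0$ one concludes $\overline{\sin\theta}=0$. (This is exactly where the hypothesis $c\neq 0$ enters, a point your proposal never addresses.) Once $\overline{\sin\theta}=0$, the renormalized curve collapses to the original construction (\ref{z_construction}): $\partial_\alpha\widetilde{Z}[\theta]=\sigma e^{i\theta}$, so $|\partial_\alpha\widetilde{Z}[\theta]|=\sigma$ is constant, the tangent angle is precisely $\theta$, $\widetilde{T}[\theta]=T$, $\widetilde{N}[\theta]=N$, and the identifications $\pi\overline{\cos\theta}/(2M)=1/(4\sigma)$ etc.\ that you list make $\widetilde{\Phi}(\theta,\gamma;c)=\Phi(\theta,\gamma;c,\sigma)$ on the nose. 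After that, (\ref{tan_equation_spacial2}) is literally (\ref{tan_equation5}) and (\ref{normal_equation_spacial2}) is literally (\ref{normal_equation_new2}), with no residual mismatch to track.
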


A proof of a proposition almost identical to Proposition \ref{spacial_periodic_version} can be found in \cite{AmbroseStraussWrightGlobBifurc}. \ Under the assumptions of
this proposition, we can see how (\ref{normal_equation_spacial2})
corresponds to (\ref{normal_equation_new2}) given our construction above;
then, \cite{AmbroseStraussWrightGlobBifurc} shows that (under these assumptions) $%
\widetilde{\Phi }\left( \theta ,\gamma ;c\right) =\Phi \left( \theta ,\gamma
;c,\sigma \right) $ with $\sigma =M/\left( 2\pi \overline{\cos \theta }%
\right) $. \ 

We thus will henceforth work with equations (\ref{normal_equation_spacial2}), (\ref%
{tan_equation_spacial2}), though a few more steps are needed in order to
bring these equations into a form conducive to applying the global
bifurcation theory.

\subsection{Final reformulation} \label{subseq:finalform}

We wish to ``solve" (\ref{tan_equation_spacial2}) for $\theta $. \ To do so,
we introduce an inverse derivative operator $\partial _{\alpha }^{-4}$, which we define in Fourier space.  For a general $2\pi $%
-periodic map $\mu $ with convergent Fourier series, let $\widehat{\mu }%
\left( k\right) $ denote the $k^{\text{th}}$ Fourier coefficient in the usual
sense, i.e. $\mu \left( \alpha \right) =\sum\nolimits_{k=-\infty }^{\infty }%
\widehat{\mu }\left( k\right) \exp \left( ik\alpha \right) $. \ Then, define for $\mu$ with mean zero (i.e. $\widehat{\mu} (0) = 0$)
\begin{equation}
\label{eq:inverseDerivDef}
\begin{cases}
\widehat{\partial _{\alpha }^{-4}\mu }\left( k\right) :=k^{-4} \widehat{\mu }\left( k\right), & k \neq 0 \\
             \widehat{\partial _{\alpha }^{-4}\mu }\left( 0\right) := 0. 
\end{cases}
\end{equation}
By this construction, $\partial _{\alpha }^{-4}$ clearly preserves periodicity, and for sufficiently regular, periodic $\mu$ with mean zero, 
\begin{equation*}
\partial _{\alpha }^{-4}\partial _{\alpha }^{4}\mu= \mu = \partial _{\alpha }^{4}\partial _{\alpha }^{-4}\mu.
\end{equation*}
Also, define the projection $P$ (here, $\mu$ may not necessarily have mean zero):
\begin{equation}
\label{eq:projDef}
P\mu (\alpha) :=\mu \left(\alpha \right)-\frac{1}{2\pi }%
\int\nolimits_{0}^{2\pi } \mu \left( \alpha^\prime \right) d\alpha^\prime;
\end{equation}
it is clear that $P\mu$ has mean zero. \ We apply $\partial _{\alpha }^{-4} P$ to both sides of (\ref{tan_equation_spacial2}), and obtain the equation
\begin{equation}
0=\theta +\partial _{\alpha }^{-4} P \Psi \left( \theta ;\sigma \right)
-\frac{\tau _{1}\sigma ^{2}}{S}\partial _{\alpha }^{-4} P \widetilde{\Phi }%
\left( \theta ,\gamma ;c\right)  \label{tan_equation6}
\end{equation}%
(throughout, note $\sigma =M/\left( 2\pi \overline{\cos \theta }\right) $).

Next, we approach (\ref{normal_equation_spacial2}). \ First, we subtract the
mean $\overline{\gamma }:=\left( 2\pi \right) ^{-1}\int\nolimits_{0}^{2\pi
}\gamma \left( \alpha \right) $ $d\alpha $ from $\gamma $ and write%
\begin{equation*}
\gamma _{1}:=\gamma -\overline{\gamma },
\end{equation*}%
As in \cite{AmbroseStraussWrightGlobBifurc}, the Birkhoff-Rott integral can be decomposed as 
\begin{equation}
B\left[ \omega \right] \gamma \left( \alpha \right) = \frac{1}{2 i \, \omega_{\alpha} (\alpha)} H \gamma (\alpha) + \mathcal{K}\left[\omega\right] \gamma (\alpha),
\end{equation}
where the most singular portion
\begin{equation} \label{eq:hilbertTransDef}
H \gamma (\alpha) := \frac{1}{2 \pi}\, \func{PV}\int \nolimits_0^{2 \pi} \gamma \left( \alpha^\prime \right) \cot \left(\frac{1}{2}\left( \alpha - \alpha^\prime \right) \right) d \alpha^\prime
\end{equation}
is the Hilbert transform, and the remainder
\begin{eqnarray*}
&&\mathcal{K} \left[ \omega \right] \gamma \left( \alpha \right) :=  \\
&&\frac{1}{4\pi i}\func{PV}\int\nolimits_{0}^{2\pi }\gamma \left( \alpha
^{\prime }\right) \left[ \cot \left( \frac{1}{2}\left( \omega \left( \alpha
\right) -\omega \left( a^{\prime }\right) \right) \right) -\frac{1}{\partial
_{\alpha ^{\prime }}\omega \left( \alpha ^{\prime }\right) }\cot \left( \frac{1}{2}%
\left( \alpha -\alpha ^{\prime }\right) \right) \right] d\alpha ^{\prime }
\end{eqnarray*}%
is smooth on the domain we later define in Section \ref{subsubseq:mappingProperties}.  Then, as is also done in \cite{AmbroseStraussWrightGlobBifurc}, we write (\ref{normal_equation_spacial2}) in
the form%
\begin{equation}
\gamma _{1}-H\left\{ 2\left\vert \partial _{\alpha }\widetilde{Z}\left[
\theta \right] \right\vert \func{Re}\left( \left( \mathcal{K} \left[ \widetilde{Z}\left[
\theta \right] \right] \left( \overline{\gamma }+\gamma _{1}\right) \right) 
\widetilde{N}\left[ \theta \right] \right) +2c\left\vert \partial _{\alpha }%
\widetilde{Z}\left[ \theta \right] \right\vert \sin \theta \right\} =0.
\label{nor_equation2}
\end{equation}%

Define%
\begin{equation*}
\Theta \left( \theta ,\gamma _{1};c\right) :=\frac{\tau _{1}\sigma ^{2}}{S}%
\partial _{\alpha }^{-4}P \widetilde{\Phi }\left( \theta ,\gamma _{1}+%
\overline{\gamma };c\right) -\partial _{\alpha }^{-4} P \Psi \left( \theta
;\sigma \right) ,
\end{equation*}%
so (\ref{tan_equation6}) becomes%
\begin{equation}
\theta -\Theta \left( \theta ,\gamma _{1};c\right) =0.
\label{theta_equation_new}
\end{equation}%
We then substitute $\theta =\Theta \left( \theta ,\gamma _{1};c\right) $
into (\ref{nor_equation2}) to obtain%
\begin{equation}
\gamma _{1}-\Gamma \left( \theta ,\gamma _{1};c\right) =0,
\label{gamma_equation_new}
\end{equation}%
where%
\begin{eqnarray}\label{gamma_equation_new2}
&&\Gamma \left( \theta ,\gamma _{1};c\right) := \\
&&H\{2\left\vert \partial
_{\alpha }\widetilde{Z}\left[ \Theta \left( \theta ,\gamma _{1};c\right) %
\right] \right\vert \func{Re}\left( \left( \mathcal{K} \left[ \widetilde{Z}\left[
\Theta \left( \theta ,\gamma _{1};c\right) \right] \right] \left( \overline{%
\gamma }+\gamma _{1}\right) \right) \widetilde{N}\left[ \Theta \left( \theta
,\gamma _{1};c\right) \right] \right)\notag \\ 
&&+2c\left\vert \partial _{\alpha }\widetilde{Z}\left[ \Theta \left( \theta
,\gamma _{1};c\right) \right] \right\vert \sin \left( \Theta \left( \theta
,\gamma _{1};c\right) \right) \}. \notag
\end{eqnarray}

In Section \ref{subsubseq:mappingProperties}, we will show compactness of $\left( \Theta ,\Gamma \right) $ given
appropriate choice of domain, as the bifurcation theorem we shall apply to (%
\ref{theta_equation_new}), (\ref{gamma_equation_new}) requires an ``identity
plus compact" formulation.

\section{Global bifurcation theorem}

\subsection{Main theorem}

We now present a global bifurcation theorem for the traveling wave problem.
\ In essence, this theorem shows the existence of a rich variety of
nontrivial solution sets. \ We prove this theorem throughout this section.

\begin{theorem}
\label{main_theorem}Let all be as defined in the previous section. \ For all
choices of constants $M>0$, $S>0$, $\tau _{1}>0$, $A\in \left[ -1,1\right] $%
, $\widetilde{A}\geq 0$, $\overline{\gamma }\in 
\mathbb{R}
$, there exists a countable number of connected sets of smooth, non-trivial
traveling-wave solutions of the two-dimensional hydroelastic vortex sheet
problem with mass (i.e. solutions to $\left( \theta -\Theta \left( \theta
,\gamma _{1};c\right) ,\gamma _{1}-\Gamma \left( \theta ,\gamma
_{1};c\right) \right) =\left( 0,0;c\right) $). \ If $\overline{\gamma }\neq
0 $ or $A\neq 0$, then each of these connected sets have at least one of the
following properties (a) -- (e): \\

\begin{enumerate}[label={\upshape(\alph*):}]

\item It contains waves with arbitrarily long interface lengths per period

\item It contains waves whose interfaces have curvature with arbitrarily large derivative

\item It contains waves in which the derivative of the jump of the tangential component of fluid velocity can be arbitrarily large

\item Its closure contains a wave whose interface self-intersects

\item It contains a sequence of waves whose interface converge to a trivial solution but whose speeds contain at least two convergent subsequences whose limits differ. \\
\end{enumerate}
\noindent If $\overline{\gamma }=0$ and $A=0$,
then another possible outcome is \\
\begin{enumerate}[label={\upshape(\alph*):},resume]

\item It contains waves which have speeds which are arbitrarily large. \\

\end{enumerate}
\end{theorem}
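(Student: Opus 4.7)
The plan is to apply an abstract global bifurcation theorem of ``identity plus compact'' Rabinowitz-alternative type to the system
\[
F(\theta,\gamma_1;c) := \bigl(\theta - \Theta(\theta,\gamma_1;c),\ \gamma_1 - \Gamma(\theta,\gamma_1;c)\bigr) = 0
\]
derived in Section \ref{subseq:finalform}. The argument mirrors that of \cite{AmbroseStraussWrightGlobBifurc}, with the essential new feature being the bending term $\partial_\alpha^4\theta$, which simultaneously provides the smoothing used for compactness and drives the bifurcation at isolated wave speeds. The mass term $\Psi_3$ and the $\widetilde{A}$-dependent pieces of $\widetilde{\Phi}$ are at least quadratic in $(\theta,\gamma_1)$, so they drop out on linearization; this is precisely what allows the with-mass and without-mass cases to be treated simultaneously.

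First I would fix Banach spaces $X$ of $2\pi$-periodic pairs $(\theta,\gamma_1)$ (e.g., a Sobolev or little-H\"older pair in which $\theta$ sits four derivatives above $\gamma_1$) and an open set $U \subset X \times \mathbb{R}$ on which $\overline{\cos\theta} \neq 0$ and $\widetilde{Z}[\theta]$ is an embedding. On $U$ the operator $\partial_\alpha^{-4}P$ gains four derivatives while $\Psi$, the smoothed Birkhoff--Rott remainder $\mathcal{K}[\widetilde{Z}[\theta]]$, and the renormalized curve map lose only finitely many, so $(\Theta,\Gamma):U\to X$ is compact and $C^1$. This is the content of the forthcoming Section \ref{subsubseq:mappingProperties}.

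Next I would linearize $F$ at the trivial branch $(\theta,\gamma_1)=(0,0)$, which solves the system for every $c$. At $\theta=0$ one has $\widetilde{T}[0]=1$, $\widetilde{N}[0]=i$, and $B[\widetilde{Z}[0]]$ reduces, on mean-zero functions, to a constant multiple of the Hilbert transform, whose Fourier symbol is $-i\,\mathrm{sgn}(k)$. The linearization is then diagonal in Fourier variables, producing for each $k\in\mathbb{Z}\setminus\{0\}$ a scalar dispersion relation $D(k;c)=0$ in which the $k^4$ contribution from $\partial_\alpha^4\theta$ dominates. The zero set $\{c_k\}$ is therefore countable and discrete, and for the parameter regime considered in the statement each $c_k$ gives a one-dimensional kernel satisfying the transversality condition $\partial_c D(k;c_k)\neq 0$; the two-dimensional-kernel case is the one explicitly postponed to a sequel. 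At each such $c_k$, the Rabinowitz (equivalently Kielh\"ofer, or Buffoni--Toland) global bifurcation theorem yields a connected set $\mathcal{C}_k$ of nontrivial solutions bifurcating from $(0,0;c_k)$.

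The final and main step is to translate the abstract Rabinowitz alternative --- $\mathcal{C}_k$ is either unbounded in $X\times\mathbb{R}$, meets another trivial-branch point, or exits $U$ --- into the geometric alternatives (a)--(f). Blowup of $\sigma = M/(2\pi\overline{\cos\theta})$ yields (a); blowup of higher derivatives of $\theta$ or of $\gamma_1$ yields (b) and (c); exiting $U$ through self-intersection of $\widetilde{Z}[\theta]$ yields (d); return to the trivial branch along a sequence with two distinct limit speeds yields (e); divergence of $|c|$ yields (f). The hard part, and the only place where the distinction between the hypotheses $\overline{\gamma}\neq 0$ or $A\neq 0$ versus $\overline{\gamma}=A=0$ enters, is ruling out (f) in the former case. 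This requires an a priori bound on $|c|$ along $\mathcal{C}_k$; I expect to obtain it by multiplying the tangential equation by $\gamma$ and integrating, using either the gravity contribution $2A\sigma^2\sin\theta$ in $\widetilde{\Phi}$ or the mean sheet-strength $\overline{\gamma}$ to produce a coercive balance on $c$. In the doubly-symmetric case $\overline{\gamma}=A=0$ the equations acquire an extra scaling symmetry in $c$, so no such bound is available, and alternative (f) genuinely reappears.
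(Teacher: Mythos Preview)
Your overall architecture is correct and matches the paper: identity-plus-compact formulation, compactness via the $\partial_\alpha^{-4}P$ smoothing, Fourier-diagonal linearization, odd crossing number at each $c_\pm(k)$, application of the Kielh\"ofer--Rabinowitz alternative, and translation of the abstract trichotomy into (a)--(f). Your observation that the $\widetilde{A}$-terms vanish on linearization, allowing mass and no-mass to be handled together, is exactly the point the paper makes.

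Two concrete deviations deserve comment. First, a minor one: the function spaces are not ``$\theta$ four derivatives above $\gamma_1$.'' The paper takes $\theta\in H^2_{\text{per,odd}}$ and $\gamma_1\in H^1_{\text{per},0,\text{even}}$; the four-derivative gain from $\partial_\alpha^{-4}$ acts on $\widetilde{\Phi}$ and $\Psi$, which themselves cost at most two or three derivatives, yielding $\Theta$ with values in $H^4$ and hence compactness into $H^2$.

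Second, and more substantively, your proposed mechanism for ruling out (f) when $\overline{\gamma}\neq 0$ or $A\neq 0$ --- ``multiply the tangential equation by $\gamma$ and integrate to get a coercive balance on $c$'' --- is not what the paper does, and it is not clear that argument closes. The paper's route uses \emph{both} equations and proceeds differently: first the \emph{normal} equation $\func{Re}(W^\ast N)+c\sin\theta=0$, together with a Birkhoff--Rott bound from \cite{AmbroseStraussWrightGlobBifurc}, forces $\|c_n\sin\theta_n\|_{H^2}$ bounded, hence $\theta_n\to 0$ uniformly along any sequence with $|c_n|\to\infty$ and bounded $(\theta_n,\gamma_{1,n})$. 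Only then is the tangential equation used, and it is \emph{integrated twice in $\alpha$} rather than tested against $\gamma$. In the case $A=0$ this isolates a term $\int^\alpha c_n\cos\theta_n\,\gamma_n$, whose boundedness forces $\overline{\gamma}=0$; in the case $A\neq 0$ one divides through by $c_n^2$ and finds that $\sigma_n\cos^2\theta_n\to 0$, contradicting $|\cos\theta_n|\to 1$. Your ``extra scaling symmetry in $c$'' heuristic for the case $\overline{\gamma}=A=0$ is not invoked in the paper; there (f) is simply left as a possible outcome because the contradiction argument above genuinely has nothing to work with.
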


\begin{remark}
The possible outcomes listed in the above theorem are very similar to the
analogous main theorem of \cite{AmbroseStraussWrightGlobBifurc}. \ Notably different is
outcome (b), where we list the possibility for the derivative of curvature
to arbitrarily grow (instead of merely curvature itself). \ This distinction
arises from a difference in domain spaces used; here, we require $\theta $ to
possess one higher derivative than in \cite{AmbroseStraussWrightGlobBifurc}.
\end{remark}

\subsection{Global bifurcation results}

\subsubsection{General global bifurcation theory}

Our main theorem posits the existence of certain solution sets to the
traveling wave problem; we show that this essentially follows directly from
an application of a global bifurcation theorem due to Rabinowitz \cite{RabinowitzNonlinearEig} and generalized by Kielh\"{o}fer
\cite{KielhoferBifurcationBook}. \ The conditions of the theorem require a notion of 
\textit{odd crossing number} for families of bounded linear operators.

\begin{definition}
\label{odd_crossing_number_def}Assume $A\left( c\right) $ is a family of
bounded linear operators depending continuously on a real parameter $c$. \
Suppose at some $c=c_{0}$, $A\left( c\right) $ has a zero eigenvalue. \
Define $\sigma ^{<}\left( c\right) :=1$ if there are no negative real
eigenvalues of $A\left( c\right) $ that perturb from this zero eigenvalue of 
$A\left( c_{0}\right) $, and $\sigma ^{<}\left( c\right) :=\left( -1\right)
^{m_{1}+\dots +m_{k}}$ if $\mu _{1},\dots ,\mu _{k}$ are all negative real
eigenvalues of $A\left( c\right) $ that perturb from this zero eigenvalue of 
$A\left( c_{0}\right) $, each with algebraic multiplicities $m_{1},\dots
,m_{k}$. \ We say $A\left( c\right) $ has an odd crossing number at $c=c_{0}$
if (i) $A\left( c\right) $ is regular for $c\in \left( c_{0}-\delta
,c_{0}\right) \cup \left( c_{0},c_{0}+\delta \right) $ and (ii) $\sigma
^{<}\left( c\right) $ changes sign at $c=c_{0}$.
\end{definition}

\begin{remark}
We can think of the crossing number itself as the number of real eigenvalues
(counted with algebraic multiplicity) of $A\left( c\right) $ that pass
through $0$ as $c$ moves across $c_{0}$ \cite{AmbroseStraussWrightGlobBifurc}.
\end{remark}

We now state the abstract theorem as it is appears in \cite{AmbroseStraussWrightGlobBifurc}, which itself is a slight modification of the Kielh\"{o}fer version
(see Remark \ref{rem:abstractThmBoundary} below).

\begin{theorem}
\label{general_bifurcation_theorem}(General bifurcation theorem). \ Let $X$
be a Banach space, and let $U$ be an open subset of $X\times 
\mathbb{R}
$. \ Let $F$ map $U$ continuously into $X$. \ Assume that \\
\begin{enumerate}[label={\upshape(\alph*):}]
\item the Frechet derivative $D_{\xi }F\left( 0,\cdot \right) $ belongs to $C\left( 
\mathbb{R}
,L\left( X,X\right) \right) $

\item the mapping $\left( \xi ,c\right) \mapsto F\left( \xi
,c\right) -\xi $\ is compact from $X\times 
\mathbb{R}
$ into $X$\textit{, and}

\item $F\left( 0,c_{0}\right) =0$ and $D_{x}F\left(
0,c\right) $ has an odd crossing number at $c=c_{0}$. \\
\end{enumerate}
Let $S$ denote the closure of the set of nontrivial
solutions of $F\left( \xi ,c\right) =0$ in $X\times 
\mathbb{R}
$. \ Let $C$ denote the connected component of $S$ to which $\left( 0,c_{0}\right) $ belongs. \ Then, one of the
following alternatives is valid: \\

\begin{enumerate}[label={\upshape(\roman*):}]
\item $C$ is unbounded; or

\item $C$ contains a point $\left( 0,c_{1}\right) $ where $c_{0}\neq c_{1}$ ; or

\item $C$ contains a point on the boundary of $U$. \\
\end{enumerate}
\end{theorem}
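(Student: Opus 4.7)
The plan is to follow the classical Rabinowitz--Kielh\"ofer argument: reduce to Leray--Schauder degree theory, convert the odd crossing number hypothesis into a jump of the local index at $(0,c_0)$, and then run a contradiction argument via a Whyburn-type separation lemma whenever all of (i)--(iii) fail.

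First, I would set $T(\xi,c) := \xi - F(\xi,c)$, so that by hypothesis (b) the map $T$ is continuous and compact on $U$, and solutions of $F(\xi,c)=0$ are exactly the fixed points of $T(\cdot,c)$. For any bounded open $\Omega \subset X$ whose boundary is free of such fixed points, the Leray--Schauder degree $d(c,\Omega) := \deg(I - T(\cdot,c),\Omega,0)$ is defined. Hypothesis (a) gives $D_\xi F(0,c) = I - K(c)$ with $K(c)$ compact and continuous in $c$. When $D_\xi F(0,c)$ is regular, the Leray--Schauder index at the trivial solution equals $(-1)^{\beta(c)}$, where $\beta(c)$ is the sum of algebraic multiplicities of the negative real eigenvalues of $D_\xi F(0,c)$. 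The hypothesis in (c) that $\sigma^{<}(c)$ flips sign at $c_0$ is then exactly the statement that this local Leray--Schauder index jumps as $c$ crosses $c_0$.

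Combining the index jump with the regularity of $D_\xi F(0,c)$ for $0 < |c - c_0| < \delta$, the standard Krasnosel'skii--Rabinowitz argument gives local bifurcation, placing $(0,c_0)$ in the closure of nontrivial solutions, so the connected component $C$ is nondegenerate. For the global statement I would argue by contradiction, assuming $C$ is bounded, $C \cap (\{0\}\times\mathbb{R}) = \{(0,c_0)\}$, and $C \cap \partial U = \emptyset$. Compactness of $T$ makes $I-T$ proper on closed bounded subsets of $U$, so $\overline{C}$ is compact. A Whyburn-type separation lemma then produces a bounded open neighborhood $\mathcal{O} \subset U$ of $C$ whose boundary contains no nontrivial solution and whose trivial slice $\mathcal{O} \cap (\{0\}\times\mathbb{R})$ is a single open interval $(a,b) \ni c_0$ with no other bifurcation value inside. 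Setting $\mathcal{O}_c := \{\xi : (\xi,c) \in \mathcal{O}\}$ and excising small balls about $0$ near the endpoints of the interval, homotopy invariance of the degree forces $c \mapsto d(c, \mathcal{O}_c \setminus B_\varepsilon(0))$ to be constant on $[a,b]$; but additivity and the index jump at $c_0$ force this constant to simultaneously equal two different values, the desired contradiction.

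The main obstacle is the Whyburn separation step: constructing the neighborhood $\mathcal{O}$ inside $U$ with boundary free of nontrivial solutions uses the assumed boundedness of $C$, its disjointness from $\partial U$, and the absence of other trivial bifurcation values, together with the point-set-topological fact that in a compact Hausdorff space a connected component can be separated from any disjoint closed set by a clopen set. Once $\mathcal{O}$ is in hand, the rest is bookkeeping in Leray--Schauder degree: additivity, excision, and the index formula. Kielh\"ofer's refinement of Rabinowitz is precisely the relaxation from odd algebraic multiplicity of $1$ as an eigenvalue of $K(c_0)$ to the more flexible odd crossing number already encoded in hypothesis (c), so no additional eigenvalue analysis is required beyond converting $\sigma^{<}$ into the sign of $(-1)^{\beta(c)}$.
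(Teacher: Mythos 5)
Your outline is essentially correct, but note that the paper never proves Theorem \ref{general_bifurcation_theorem} at all: it imports the statement from \cite{AmbroseStraussWrightGlobBifurc} as a quoted form of the Rabinowitz--Kielh\"ofer theorem \cite{RabinowitzNonlinearEig}, \cite{KielhoferBifurcationBook}, and the only commentary offered is Remark \ref{rem:abstractThmBoundary}, which observes that Kielh\"ofer's proof (stated for $U=X\times\mathbb{R}$) is easily modified to allow a general open $U$, which is exactly what produces alternative (iii). So what you have written is not an alternative route to the paper's argument but a reconstruction of the standard degree-theoretic proof underlying the citation: Leray--Schauder degree for the compact perturbation $T(\xi,c)=\xi-F(\xi,c)$, the index formula $(-1)^{\beta(c)}$ at the regular trivial solution, the translation of the odd crossing number into an index jump, and the Whyburn separation plus generalized homotopy/excision bookkeeping under the negation of (i)--(iii). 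Two small points deserve care if you were to write this out in full. First, the paper's $\sigma^{<}(c)$ counts only those negative eigenvalues that perturb from the zero eigenvalue of $D_{\xi}F(0,c_0)$, whereas the Leray--Schauder index involves all negative real eigenvalues; the bridge is that the remaining negative eigenvalues stay bounded away from zero for $c$ near $c_0$, so their contribution to the parity is locally constant and the sign change of $\sigma^{<}$ is indeed equivalent to the index jump. Second, your claim that the trivial slice of the separating neighborhood $\mathcal{O}$ is a single open interval containing no other bifurcation value is a bit stronger than what the separation lemma hands you directly; the standard fix is to shrink $\mathcal{O}$ (using compactness of $C$ and $C\cap(\{0\}\times\mathbb{R})=\{(0,c_0)\}$) so that all trivial solutions in $\overline{\mathcal{O}}$ have parameter within a small interval about $c_0$ on which $D_{\xi}F(0,c)$ is regular for $c\neq c_0$, after which your excision-of-balls and constancy-of-degree argument goes through. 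With those two refinements your sketch matches the proof the paper is relying on.
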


\begin{remark} \label{rem:abstractThmBoundary}
The Kielh\"{o}fer version of the theorem explicitly assumes the case $%
U=X\times 
\mathbb{R}
$. \ The proof, however, is easily modified to admit general open $%
U\subseteq X\times 
\mathbb{R}
$. \ The choice of such $U$ for our problem will ensure well-definedness and
compactness of our mapping $\left( \Theta ,\Gamma \right) .$
\end{remark}

One condition for Theorem \ref{general_bifurcation_theorem} is that the mapping in
question can be written in the form ``identity plus compact." \ We show that $%
\left( \Theta ,\Gamma \right) $ is in fact compact over an appropriately
chosen domain.

\subsubsection{Mapping properties} \label{subsubseq:mappingProperties}

To begin, we set up the necessary notation for the function spaces we wish
to work with.

\begin{definition}
Let $H_{\text{per}}^{s}$ denote $H_{\text{per}}^{s}\left[ 0,2\pi \right] ,$
i.e. the usual Sobelev space of $2\pi $-periodic functions from $%
\mathbb{R}
$ to $%
\mathbb{C}
$ with square-integrable derivatives up to order $s\in 
\mathbb{N}
$. \ Let $H_{\text{per,odd}}^{s}$ denote the subset of $H_{\text{per}}^{s}$
comprised of odd functions; define $H_{\text{per,even}}^{s}$ similarly. \
Let $H_{\text{per,}0\text{,even}}^{s}$ denote the subset of $H_{\text{%
per,even}}^{s}$ comprised of mean-zero functions. \ Finally, letting $H_{%
\text{loc}}^{s}$ denote the usual Sobolev space of functions in $H^{s}\left(
I\right) $ for all bounded intervals $I$, we put%
\begin{equation*}
H_{M}^{s}=\left\{ \omega \in H_{\text{loc}}^{s}:\omega \left( \alpha \right)
-\frac{M\alpha }{2\pi }\in H_{\text{per}}^{s}\right\} .
\end{equation*}
For $b\geq 0$ and $s\geq 2$, define the ``chord-arc space"%
\begin{equation*}
C_{b}^{s}=\left\{ \omega \in H_{M}^{s}:\inf_{\alpha ,\alpha ^{\prime }\in 
\left[ a,b\right] }\left\vert \frac{\omega \left( \alpha \right) -\omega
\left( \alpha ^{\prime }\right) }{\alpha ^{\prime }-\alpha }\right\vert
>b\right\} .
\end{equation*}%
\end{definition}

We are now ready to set an appropriate domain for $\left( \Theta ,\Gamma
\right) $, and assert its compactness as a mapping over such domain.

\begin{proposition}
\label{mapping_proposition}Put%
\begin{equation*}
X=H_{\text{per},\text{odd}}^{2}\times H_{\text{per},0,\text{even}}^{1}\times 
\mathbb{R}%
\end{equation*}%
\newline
and%
\begin{equation}
U_{b,h}=\left\{ \left( \theta ,\gamma _{1};c\right) \in X
:\overline{\cos \theta }>h,\widetilde{Z}\left[ \theta \right] \in C_{b}^{2}%
\text{ and } \widetilde{Z}\left[ \Theta \left( \theta ,\gamma _{1};c\right) %
\right] \in C_{b}^{5}\right\} .  \label{Ubh_definition}
\end{equation}%
\newline
The mapping $\left( \Theta ,\Gamma \right) $ (where $\Theta ,\Gamma $ are as
defined in Section \ref{subseq:finalform}) from $U_{b,h}\subseteq X$ into 
$X$ is compact.
\end{proposition}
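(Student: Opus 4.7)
The plan is to show that $(\Theta,\Gamma)$ maps $U_{b,h}$ into a space that is strictly smoother in each functional component than what is required by $X$, and then to apply Rellich--Kondrachov. Concretely, I aim to prove $\Theta \in H^4_{\text{per,odd}}$ and $\Gamma \in H^2_{\text{per,0,even}}$ whenever $(\theta,\gamma_1;c) \in U_{b,h}$.

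For $\Theta$, recall that $\Theta = (\tau_1\sigma^2/S)\,\partial_\alpha^{-4}P\widetilde\Phi - \partial_\alpha^{-4}P\Psi$; since $\partial_\alpha^{-4}P$ gains four derivatives (immediate from the Fourier definition \eqref{eq:inverseDerivDef}), it suffices to bound $\Psi$ and $\widetilde\Phi$ in $L^2$. The pieces of $\Psi = \Psi_1+\Psi_2+\Psi_3$ involve at most two $\alpha$-derivatives of $\theta$, with the cubic nonlinearity $\theta_\alpha^2\theta_{\alpha\alpha}$ handled by the Banach algebra structure of $H^1$ in one dimension and the composition $\cos\theta$ in $\Psi_3$ handled by smoothness of $\cos$. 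The more delicate piece is $\widetilde\Phi$, whose worst terms carry one $\alpha$-derivative of products containing $\func{Re}\bigl((B[\widetilde Z[\theta]]\gamma)\widetilde T[\theta]\bigr)$. Here I would invoke the decomposition $B[\widetilde Z]\gamma = (2i\partial_\alpha \widetilde Z)^{-1}H\gamma + \mathcal{K}[\widetilde Z]\gamma$, use that the Hilbert transform preserves $H^1$, use that $\mathcal{K}[\widetilde Z[\theta]]$ is smoothing, and use the chord-arc condition $\widetilde Z[\theta]\in C_b^2$ together with $\overline{\cos\theta}>h$ to keep the denominators $\partial_\alpha\widetilde Z[\theta]$ and $\overline{\cos\theta}$ bounded below. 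These estimates give $\widetilde\Phi\in L^2$ and hence $\Theta\in H^4$. Parity and zero-mean properties follow by direct inspection using that $\theta$ is odd and $\gamma_1$ is even.

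For $\Gamma$, the previous step ensures $\widetilde Z[\Theta(\theta,\gamma_1;c)]\in H^5$, and the chord-arc condition $\widetilde Z[\Theta]\in C_b^5$ built into $U_{b,h}$ makes the kernel of $\mathcal{K}[\widetilde Z[\Theta]]$ sufficiently regular for a smoothing estimate at higher order. A computation of the type carried out in \cite{AmbroseStraussWrightGlobBifurc} shows $\mathcal{K}[\widetilde Z[\Theta]](\overline\gamma+\gamma_1)\in H^2$, gaining one derivative over $\gamma_1\in H^1$. The remaining factors $|\partial_\alpha\widetilde Z[\Theta]|$, $\widetilde N[\Theta]$, and $\sin\Theta$ all lie in $H^4$, so multiplying them against the $H^2$ factor keeps us in $H^2$. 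Finally, the Hilbert transform preserves $H^2$ and always produces mean-zero output, giving $\Gamma\in H^2_{\text{per,0,even}}$ after a parity check.

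Putting this together, bounded subsets of $U_{b,h}$ (with $c$ bounded) map into bounded subsets of $H^4_{\text{per,odd}}\times H^2_{\text{per,0,even}}$, whose inclusions into $H^2_{\text{per,odd}}\times H^1_{\text{per,0,even}}$ are compact by Rellich--Kondrachov. Continuity of the map in the relevant norms follows from Lipschitz dependence of the Birkhoff--Rott-type operators on the interface in the chord-arc regime and from standard Sobolev multiplication and composition estimates. The step I expect to be the main obstacle is the smoothing estimate for $\mathcal{K}[\widetilde Z[\Theta]]\gamma$ at the $H^1\to H^2$ level: this requires tracking how many derivatives the kernel subtraction in the definition of $\mathcal{K}$ actually gains, checking that the chord-arc norm of $\widetilde Z[\Theta]$ suffices, and handling the fact that $\widetilde Z[\Theta]$ depends nonlinearly on all of $(\theta,\gamma_1,c)$ through $\Theta$ itself.
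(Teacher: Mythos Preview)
Your approach is essentially the same as the paper's: show that $(\Theta,\Gamma)$ lands in a strictly smoother space than $X$ and invoke Rellich. The only noteworthy difference is quantitative rather than structural: the paper argues (citing \cite{AmbroseStraussWrightGlobBifurc}) that $\Gamma$ is the composition of $\Theta$ with an operator that neither gains nor loses derivatives, hence $\Gamma\in H^4_{\text{per},0,\text{even}}$, whereas you only claim $\Gamma\in H^2_{\text{per},0,\text{even}}$ via a one-derivative smoothing estimate for $\mathcal{K}[\widetilde Z[\Theta]]$. Your weaker target is still enough for compactness of the inclusion into $H^1$, so nothing is lost; the paper's sharper count simply leans more heavily on the smoothing properties of $\mathcal{K}$ already established in the cited reference.
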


\begin{proof}
The chord-arc conditions are imposed to ensure the well-definedness of the
Birkhoff-Rott integral; this is seen in \cite{AmbroseStraussWrightGlobBifurc}. \ With
these conditions, alongside the condition $\overline{\cos \theta }>h$, we
ensure that each component of $\widetilde{\Phi }$ is well-defined over $%
U_{b,h}$. \ It is also demonstrated in \cite{AmbroseStraussWrightGlobBifurc} that $%
\widetilde{\Phi }$ costs a derivative in $\theta $ and retains derivatives
in $\gamma $; here, we have that $\widetilde{\Phi }$ maps from $U_{b,h}$ to $%
H_{\text{per},0}^{1}$. \ We need to check the mapping properties of $\Psi $ (i.e.,
the terms that differ from the analogous equation of \cite{AmbroseStraussWrightGlobBifurc}). \ Recall that $\Psi :=\Psi _{1}+\Psi _{2}+\Psi _{3}$, where%
\begin{eqnarray*}
\Psi _{1}\left( \theta ;\sigma \right) &=&\frac{3}{2}\theta _{\alpha
}^{2}\theta _{\alpha \alpha }=\frac{1}{2}\partial _{\alpha }\left[ \theta
_{\alpha }^{3}\right] \\
\Psi _{2}\left( \theta ;\sigma \right) &=&-\tau _{1}\sigma ^{2}\theta
_{\alpha \alpha } \\
\Psi _{3}\left( \theta ;\sigma \right) &=&\frac{2\widetilde{A}\sigma ^{3}}{S}%
\left( \cos \theta \right) _{\alpha }
\end{eqnarray*}%

Using elementary results regarding algebra properties for Sobolev
spaces, we have that the maps $\left( \cdot \right) ^{3}$ and $\cos \left(
\cdot \right) $ both map from $H_{\text{per}}^{s}$ to $H_{\text{per}}^{s}$
as long as $s>\frac{n}{2}=\frac{1}{2}$. \ The choice $s=2$ satisfies this. \
Also, if $\theta $ is odd, $\theta _{\alpha }$ is even (as is $\theta
_{\alpha }^{3}$), so $\partial _{\alpha }\left[ \theta _{\alpha }^{3}\right] 
$ is odd. \ The function $\theta _{\alpha \alpha }$ is also odd, so $\Psi _{1},\Psi _{2}$
maps into an ``odd" space. \ Moreover, since $\partial _{\alpha }\cos \left(
\theta \right) =-\left( \sin \theta \right) \left( \partial _{\alpha }\theta
\right) $ and $\partial _{\alpha }\theta $ is even, we see that $\Psi _{3}$
maps into an "odd" space as well. \ Thus, we can write 
\begin{equation*}
\Psi :H_{\text{per},\text{odd}}^{2}\rightarrow H_{\text{per},\text{odd}}^{0},
\end{equation*}%
so $\partial _{\alpha }^{-4} P \Psi $ maps into $H_{\text{per}}^{4}$. \ 

Furthermore, $\Psi $ maps
bounded sets to bounded sets as well, as each $\partial _{\alpha }$ is a
bounded linear map between appropriate Sobolev spaces, and $\left( \cdot \right) ^{3}$ also maps bounded sets to bounded sets given that its
domain satisfies the condition $s>\frac{1}{2}$. \ Since $\partial _{\alpha
}^{-4}$ and $P$ are also bounded linear maps, we have that $\partial _{\alpha
}^{-4} P \Psi $ maps bounded sets to bounded sets. \ Also, it is clear that $%
\partial _{\alpha }^{-4} P \Psi \left( \theta ;\sigma \right) $ retains parity
of $\theta $. \ 

We summarize the mapping properties as follows:%
\begin{eqnarray*}
\widetilde{\Phi } &:&U_{b,h}\rightarrow H_{per,0,odd}^{1}, \\
\partial _{\alpha }^{-4} P \widetilde{\Phi } &:&U_{b,h}\rightarrow
H_{\text{per},\text{odd}}^{5}, \\
\partial _{\alpha }^{-4} P \Psi &:&U_{b,h}\rightarrow H_{\text{per},\text{odd}}^{4},
\end{eqnarray*}%
so by the above, we have by our definition of $\Theta,$%
\begin{equation*}
\Theta :U_{b,h}\rightarrow H_{\text{per},\text{odd}}^{4}.
\end{equation*}%
As in \cite{AmbroseStraussWrightGlobBifurc}, $\Gamma $ is written as a composition of $%
\Theta $ and an operator that neither gains nor costs derivatives, so%
\begin{equation*}
\Gamma :U_{b,h}\rightarrow H_{\text{per},0,\text{even}}^{4}\text{.}
\end{equation*}%
By Rellich's theorem, bounded sets in $H_{\text{per},\text{odd}}^{4}$ are precompact in $%
H_{\text{per},\text{odd}}^{2}$, and bounded sets $H_{\text{per},0,\text{even}}^{4}$ are precompact in $%
H_{\text{per},0,\text{even}}^{1}$. \ Each term of $\left( \Theta ,\Gamma \right) $ maps
bounded sets to bounded sets. \ Thus, by viewing $\left( \Theta ,\Gamma
\right) $ as a map from open $U_{b,h}\subseteq X$ into $X$, we have that $%
\left( \Theta ,\Gamma \right) $ is a compact map.
\end{proof}

We next compute the Frech\'{e}t derivative of $\left( \theta -\Theta \left(
\theta ,\gamma _{1};c\right) ,\gamma _{1}-\Gamma \left( \theta ,\gamma
_{1};c\right) \right) $, and subsequently use this to analyze the crossing
number.

\subsubsection{Linearization calculation}

In order to abbreviate the linearization calculations of $\left( \Theta
,\Gamma \right) $, we introduce some notation. \ For any map $\mu \left(
\theta ,\gamma _{1};c\right) $, let $\left( \overrightarrow{\theta },%
\overrightarrow{\gamma }\right) $ denote the direction of differentiation,
and define for general, sufficiently regular mappings $\mu$
\begin{eqnarray*}
\mu _{0} &:=&\mu \left( 0,0;c\right) \\
D\mu &:=&\left. D_{\theta ,\gamma _{1}}\mu \left( \theta ,\gamma
_{1};c\right) \right\vert _{\left( 0,0;c\right) } \left( \overrightarrow{\theta },%
\overrightarrow{\gamma }\right):=\lim_{\varepsilon \rightarrow 0}\frac{1}{\varepsilon }%
\left( \mu \left( \varepsilon \overrightarrow{\theta },\varepsilon 
\overrightarrow{\gamma };c\right) -\mu _{0}\right) .
\end{eqnarray*}%
Note that $\sigma $ is dependent on $\theta $; we denote $\Sigma \left(
\theta \right) :=\sigma =M/\left( 2\pi \overline{\cos \theta }\right) $. \
We note the following elementary results:%
\begin{eqnarray*}
\sin _{0} &=&0,\text{ \ \ \ }D\cos =0 \\
\Sigma _{0} &=&\frac{M}{2\pi },\text{ \ \ }D\Sigma =0.
\end{eqnarray*}

A large number of components of $\left( D\Theta ,D\Gamma \right) $ were
explicitly computed in \cite{AmbroseStraussWrightGlobBifurc}. \ Namely, these results
yield for our closely analogous $\widetilde{\Phi }$ 
\begin{equation*}
D \widetilde{\Phi }=-\frac{\pi \overline{\gamma }}{M}\left( -\frac{\overline{%
\gamma }}{\tau _{1}}+\frac{cAM}{\pi \tau _{1}}\right) \partial _{\alpha }H%
\overrightarrow{\theta }-\frac{AM^{2}}{2\pi ^{2}\tau _{1}}P\overrightarrow{%
\theta }+\left( \frac{c}{\tau _{1}}-\frac{\pi A\overline{\gamma }}{\tau _{1}M%
}\right) \partial _{\alpha }\overrightarrow{\gamma },
\end{equation*}
where the projection $P$ is as defined in (\ref{eq:projDef}) and $H$ is the Hilbert transform (see (\ref{eq:hilbertTransDef})). \ We need to compute the linearization of
the ``extra" terms $\Psi _{1}, \Psi _{2}, \Psi _{3}$ that (loosely) correspond
to the hydroelastic and interface mass effects. \ Examine
\begin{eqnarray*}
D\Psi _{1} &=&\frac{1}{2}\partial _{\alpha }D\left[ \theta _{\alpha }^{3}%
\right] \\
&=&\frac{3}{2}\partial _{\alpha }\left[ \left( \partial _{\alpha }\theta
\right) ^{2}\partial _{\alpha }\overrightarrow{\theta }\right] ,
\end{eqnarray*}
so at $\theta =0$, we see $D\Psi _{1}=0$. \ Next, we examine
\begin{eqnarray*}
D\Psi _{2} &=&D\left[ -\tau _{1}\Sigma ^{2}\partial _{\alpha }^{2}\theta %
\right] \\
&=&-\tau _{1}\left( 2\Sigma _{0}D\Sigma \left[ \partial _{\alpha }^{2}\theta
\right]_{\theta =0}+\Sigma _{0}^{2}\partial _{\alpha }^{2}D\theta
\right) \\
&=&-\tau _{1}\left( 0+\left( \frac{M}{2\pi }\right) ^{2}\partial _{\alpha
}^{2}\overrightarrow{\theta }\right) \\
&=&-\tau _{1}\left( \frac{M}{2\pi }\right) ^{2}\partial _{\alpha }^{2}%
\overrightarrow{\theta },
\end{eqnarray*}
and
\begin{eqnarray*}
D\Psi _{3} &=&D\left[ -\frac{2\widetilde{A}g\Sigma ^{3}}{S}\sin \left(
\theta \right) \theta _{\alpha }\right] \\
&=&-\frac{2\widetilde{A}}{S}\left[ D\left( \Sigma ^{3}\right) \sin
_{0}\left[ \partial _{\alpha }\left( \theta \right) \right]_{\theta
=0}+\Sigma _{0}^{3}D\cos \left[\partial _{\alpha }\left( \theta \right)
\right]_{\theta =0}+\Sigma _{0}^{3}\sin _{0}\partial _{\alpha }%
\overrightarrow{\theta }\right] \\
&=&-\frac{2\widetilde{A}}{S}\left[ 0+0+0\right] \\
&=&0.
\end{eqnarray*}
Thus, 
\begin{equation*}
D\Psi =-\tau _{1}\left( \frac{M}{2\pi }\right) ^{2}\partial _{\alpha }^{2}%
\overrightarrow{\theta }.
\end{equation*}

We pause to remark that the crossing number is entirely determined by the linearization
near equilibrium. \ Note that $D\Psi _{3}=0$ means that the presence of
interface mass will not have any bearing on the application of Theorem \ref%
{general_bifurcation_theorem} to our problem; in other words, the same
conclusions about bifurcation (given odd crossing number) can be drawn in the $%
\widetilde{A}=0$ as in the $\widetilde{A}>0$ case. \ 

Continuing, we recall the definition of $\Theta$, and calculate%
\begin{eqnarray*}
D\Theta &=&\partial _{\alpha }^{-4} P \left[ \frac{\tau _{1}}{S}D\left( \Sigma
^{2}\widetilde{\Phi }\right) -D\Psi \right] \\
&=&\partial _{\alpha }^{-4} P \left[ \frac{\tau _{1}}{S}\left( 2\Sigma
_{0}D\Sigma \widetilde{\Phi }_{0}+\Sigma _{0}^{2}D\widetilde{\Phi }\right)
-D\Psi \right] \\
&=&\partial _{\alpha }^{-4} P \left[ \frac{\tau _{1}}{S}\left( \frac{M}{2\pi }%
\right) ^{2}D\widetilde{\Phi }-D\Psi \right] \\
&=&\frac{\tau _{1}\overline{\gamma }M}{4\pi S}\left( \frac{\overline{\gamma }%
}{\tau _{1}}-\frac{cAM}{\pi \tau _{1}}\right) \partial _{\alpha
}^{-4}\partial _{\alpha }H\overrightarrow{\theta }-\frac{AM^{4}}{8\pi ^{4}S}%
\partial _{\alpha }^{-4}P\overrightarrow{\theta }+\frac{\tau _{1}M^{2}}{4\pi
^{2}}\partial _{\alpha }^{-4}\partial _{\alpha }^{2}\overrightarrow{\theta }
\\
&&-\frac{\tau _{1}M^{2}}{4\pi ^{2}S}\left( \frac{\pi A\overline{\gamma }}{%
\tau _{1}M}-\frac{c}{\tau _{1}}\right) \partial _{\alpha }^{-4}\partial
_{\alpha }\overrightarrow{\gamma }.
\end{eqnarray*}

The mapping $\Gamma $ defined in (\ref{gamma_equation_new2}) is the composition of $\Theta $ and a mapping
identical to that which appears in \cite{AmbroseStraussWrightGlobBifurc}. \ It is shown
in \cite{AmbroseStraussWrightGlobBifurc} that $D\Gamma =\frac{cM}{\pi }HD\Theta ,$ so
by substituting our expression for $D\Theta $, we obtain%
\begin{eqnarray*}
D\Gamma &=&\frac{c\tau _{1}\overline{\gamma }M^{2}}{4\pi ^{2}S}\left( \frac{%
\overline{\gamma }}{\tau _{1}}-\frac{cAM}{\pi \tau _{1}}\right) H\partial
_{\alpha }^{-4}\partial _{\alpha }H\overrightarrow{\theta }-\frac{cAM^{5}}{%
8\pi ^{5}S}H\partial _{\alpha }^{-4}P\overrightarrow{\theta } \\
&&+\frac{c\tau
_{1}M^{3}}{4\pi ^{3}}H\partial _{\alpha }^{-4}\partial _{\alpha }^{2}%
\overrightarrow{\theta } -\frac{c\tau _{1}M^{3}}{4\pi ^{3}S}\left( \frac{\pi A\overline{\gamma }}{%
\tau _{1}M}-\frac{c}{\tau _{1}}\right) H\partial _{\alpha }^{-4}\partial
_{\alpha }\overrightarrow{\gamma }.
\end{eqnarray*}

Combining our results for $D\Theta $, $D\Gamma $, we write the linearization 
$L_{c}$ at $\left(
0,0;c\right) $ in matrix form:%
\begin{equation}
L_{c}%
\begin{bmatrix}
\overrightarrow{\theta } \\ 
\overrightarrow{\gamma }%
\end{bmatrix}%
:=%
\begin{bmatrix}
\overrightarrow{\theta }-D\Theta  \\ 
\overrightarrow{\gamma }-D\Gamma 
\end{bmatrix},
\label{linearized_equations}
\end{equation}
where
\begin{equation*}
L_{c}:=%
\begin{bmatrix}
L_{11} & L_{12} \\ 
L_{21} & L_{22}%
\end{bmatrix}%
\begin{bmatrix}
\overrightarrow{\theta } \\ 
\overrightarrow{\gamma }%
\end{bmatrix},%
\end{equation*}
with
\begin{eqnarray*}
L_{11} &:=&1-\frac{\tau _{1}\overline{\gamma }M}{4\pi S}\left( \frac{%
\overline{\gamma }}{\tau _{1}}-\frac{cAM}{\pi \tau _{1}}\right) \partial
_{\alpha }^{-4}\partial _{\alpha }H+\frac{AM^{4}}{8\pi ^{4}S}\partial
_{\alpha }^{-4}P-\frac{\tau _{1}M^{2}}{4\pi ^{2}}\partial _{\alpha
}^{-4}\partial _{\alpha }^{2}, \\
L_{12} &:=&\frac{\tau _{1}M^{2}}{4\pi ^{2}S}\left( \frac{\pi A\overline{%
\gamma }}{\tau _{1}M}-\frac{c}{\tau _{1}}\right) \partial _{\alpha
}^{-4}\partial _{\alpha }, \\
L_{21} &:=&-\frac{c\tau _{1}\overline{\gamma }M^{2}}{4\pi ^{2}S}\left( \frac{%
\overline{\gamma }}{\tau _{1}}-\frac{cAM}{\pi \tau _{1}}\right) H\partial
_{\alpha }^{-4}\partial _{\alpha }H+\frac{cAM^{5}}{8\pi ^{5}S}H\partial
_{\alpha }^{-4}P-\frac{c\tau _{1}M^{3}}{4\pi ^{3}}H\partial _{\alpha
}^{-4}\partial _{\alpha }^{2}, \\
L_{22} &:=&1+\frac{c\tau _{1}M^{2}}{4\pi ^{2}S}\left( \frac{A\overline{%
\gamma }}{\tau _{1}}-\frac{cM}{\pi \tau _{1}}\right) H\partial _{\alpha
}^{-4}\partial _{\alpha }.
\end{eqnarray*}

\subsubsection{Eigenvalue calculation}

The next step in applying Theorem \ref{general_bifurcation_theorem} to $%
\left( \theta -\Theta ,\gamma _{1}-\Gamma \right) $ is to find $c$ that
yield zero eigenvalues of $L_{c}$. \ To do so, we note the periodicity of $%
\left( \overrightarrow{\theta },\overrightarrow{\gamma }\right) $, and
examine the Fourier coefficients of $L_{c}$. \ Let $\mu$ be a general $2 \pi$-periodic map with convergent Fourier series. \ Noting our definition of $\widehat{\partial_{\alpha}^{-4} \mu }\left( k \right)$ in (\ref{eq:inverseDerivDef}), along with the
elementary results
\begin{eqnarray*}
\widehat{\partial _{\alpha }\mu }\left( k\right) &=&ik\widehat{\mu }\left(
k\right), \\
\widehat{\partial _{\alpha }^{2}\mu }\left( k\right) &=&-k^{2}\widehat{\mu }%
\left( k\right), \\
\widehat{H\mu }\left( k\right) &=&-i\func{ sgn}\left( k\right) \widehat{\mu }%
\left( k\right), \\
\widehat{P\mu }\left( k\right) &=&\left( 1-\delta _{0}\left( k\right)
\right) \widehat{\mu }\left( k\right),
\end{eqnarray*}%
we compute, for $k \neq 0$,
\begin{eqnarray*}
\widehat{L_{11}}\left( k\right) &=&1-\frac{\tau _{1}\overline{\gamma }M}{%
4\pi S}\left( \frac{\overline{\gamma }}{\tau _{1}}-\frac{cAM}{\pi \tau _{1}}%
\right) \frac{1}{\left\vert k\right\vert ^{3}}+\frac{AM^{4}}{8\pi ^{4}S}%
\frac{1}{k^{4}}+\frac{\tau _{1}M^{2}}{4\pi ^{2}}\frac{1}{k^{2}}, \\
\widehat{L_{12}}\left( k\right) &=&i\frac{\tau _{1}M^{2}}{4\pi ^{2}S}\left( 
\frac{\pi A\overline{\gamma }}{\tau _{1}M}-\frac{c}{\tau _{1}}\right) \frac{1%
}{k^{3}}, \\
\widehat{L_{21}}\left( k\right) &=&i\frac{c\tau _{1}\overline{\gamma }M^{2}}{%
4\pi ^{2}S}\left( \frac{\overline{\gamma }}{\tau _{1}}-\frac{cAM}{\pi \tau
_{1}}\right) \frac{1}{k^{3}}-i\frac{cAM^{5}}{8\pi ^{5}S}\frac{\func{sgn}\left(
k\right) }{k^{4}}-i\frac{c\tau _{1}M^{3}}{4\pi ^{3}}\frac{\func{sgn}\left( k\right) 
}{k^{2}}, \\
\widehat{L_{22}}\left( k\right) &=&1+\frac{c\tau _{1}M^{2}}{4\pi ^{2}S}%
\left( \frac{A\overline{\gamma }}{\tau _{1}}-\frac{cM}{\pi \tau _{1}}\right) 
\frac{1}{\left\vert k\right\vert ^{3}};
\end{eqnarray*}%
thus,
\begin{equation}\nonumber
\widehat{L_{c}%
\begin{bmatrix}
\overrightarrow{\theta } \\ 
\overrightarrow{\gamma }%
\end{bmatrix}%
}\left( k\right) 
=\widehat{L_{c}}\left( k\right) 
\begin{bmatrix}
\widehat{\overrightarrow{\theta }} \\ 
\widehat{\overrightarrow{\gamma }}%
\end{bmatrix}
=%
\begin{bmatrix}
\widehat{L_{11}}\left( k\right) & \widehat{L_{12}}\left( k\right) \\ 
\widehat{L_{21}}\left( k\right) & \widehat{L_{22}}\left( k\right)%
\end{bmatrix}%
\begin{bmatrix}
\overrightarrow{\theta } \\ 
\overrightarrow{\gamma }%
\end{bmatrix}%
.
\end{equation}

For $k=0$, $\widehat{P\mu }\left( k\right) =0$, so $\widehat{L_{c}}\left(
0\right) =I$. \ Clearly, when $k=0$, $1$ is an eigenvalue with multiplicity $%
2$. \ For $k\neq 0$, we compute the eigenvalues via Mathematica \cite{Mathematica}. \ In this
case, $1$ is also an eigenvalue, as is
\begin{equation*}
\lambda _{k}\left( c\right) :=1+\frac{M^{2}\tau _{1}}{4\pi ^{2}}\left\vert
k\right\vert ^{-2}+\frac{-c^{2}M^{3}+2Ac\overline{\gamma }M^{2}\pi -%
\overline{\gamma }^{2}M\pi ^{2}}{4\pi ^{3}S}\left\vert k\right\vert ^{-3}+%
\frac{AM^{4}}{8\pi ^{4}S}\left\vert k\right\vert ^{-4}.
\end{equation*}%
Note that $\lambda _{k}\left( c\right) $ is even with respect to $k$. \
Also, by basic Fourier series results, $\left\{ 1 \right\}\cup \left\{ \lambda _{k}\left( c\right) \right\} _{k}$ constitutes the
point spectrum of $L_{c}$. \ The eigenvector corresponding to $\lambda
_{k}\left( c\right) $ is%
\begin{equation}
v_{k}\left( c\right) :=%
\begin{bmatrix}
\frac{\func{sgn}\left( k\right) \text{ }i\pi }{cM} \\ 
1%
\end{bmatrix}%
,  \label{eigenvector_definition}
\end{equation}%
and thus
\begin{equation*}
\begin{bmatrix}
\frac{i\pi }{cM} \\ 
1%
\end{bmatrix}%
\exp \left( ik\alpha \right) \text{\ \ \ \ and \ \ }%
\begin{bmatrix}
-\frac{i\pi }{cM} \\ 
1%
\end{bmatrix}%
\exp \left( -ik\alpha \right) .
\end{equation*}%
are each eigenfunctions of $L_{c}$ \ However, we can take real and imaginary
parts of each, and obtain eigenfunctions
\begin{equation*}
\begin{bmatrix}
-\frac{\pi }{cM}\sin \left( k\alpha \right) \\ 
\cos \left( k\alpha \right)%
\end{bmatrix}%
\text{ \ \ \ \ and \ \ }%
\begin{bmatrix}
\frac{\pi }{cM}\cos \left( k\alpha \right) \\ 
\sin \left( k\alpha \right)%
\end{bmatrix}%
.
\end{equation*}%
Only the first is in $H_{\text{per},\text{odd}}^{2}\times H_{\text{per},0,\text{even}}^{1}$, thus, given
our chosen function space, we have that the dimension of the eigenspace of $%
\lambda _{k}\left( c\right) $ is one. \ We can then drop the absolute
values, and state, for $k>0$,
\begin{equation}
\lambda _{k}\left( c\right) =1+\frac{M^{2}\tau _{1}}{4\pi ^{2}}k^{-2}+\frac{%
-c^{2}M^{3}+2Ac\overline{\gamma }M^{2}\pi -\overline{\gamma }^{2}M\pi ^{2}}{%
4\pi ^{3}S}k^{-3}+\frac{AM^{4}}{8\pi ^{4}S}k^{-4}. \label{lambda_c_def}
\end{equation}

We summarize the spectral results thus far, along with with a few immediate
consequences, below:

\begin{proposition}
\label{spectral_proposition}Let $L_{c}$ be the linearization of $\left(\theta , \gamma _{1};c\right) \mapsto ( \theta -\Theta \left( \theta
,\gamma _{1};c\right) ,\gamma _{1}-\Gamma \left( \theta ,\gamma
_{1};c\right) ) $ at $\left( 0,0;c\right) $. \ The spectrum of $L_{c}$
is the set of eigenvalues $\left\{ 1\right\} \cup \left\{ \lambda _{k}\left(
c\right) :k\in 
\mathbb{N}
\right\} $, where $\lambda _{k}\left( c\right) $ is as defined in (\ref{lambda_c_def}). \ Each eigenvalue $\lambda $ of $L_{c}$ has
algebraic multiplicity equal to its geometric multiplicity, which we denote
\begin{equation*}
N_{\lambda }\left( c\right) :=\left\vert \left\{ k\in 
\mathbb{N}
:\lambda _{k}\left( c\right) =\lambda \right\} \right\vert ,
\end{equation*}%
and the corresponding eigenspace is%
\begin{equation*}
E_{\lambda }\left( c\right) :=\func{span}\left\{ 
\begin{bmatrix}
-\frac{\pi }{cM}\sin \left( k\alpha \right) \\ 
\cos \left( k\alpha \right)%
\end{bmatrix}%
:k\in 
\mathbb{N}
\text{ such that }\lambda _{k}\left( c\right) =\lambda \right\} .
\end{equation*}%
Also, for fixed $k$, if the inequality 
\begin{equation}
AM^{4}+\left( -2\overline{\gamma }^{2}M\pi ^{3}+2A^{2}\overline{\gamma }%
^{2}M\pi ^{3}\right) k+2M^{2}\pi ^{2}S\tau _{1}k^{2}+8\pi ^{4}Sk^{4}\geq 0
\label{nonstrict_ineq_condition}
\end{equation}%
holds, then the $c\in 
\mathbb{R}
$ for which $\lambda _{k}\left( c\right) =0$ is 
\begin{equation}
c_{\pm }\left( k\right) :=\frac{A\overline{\gamma }\pi }{M}\pm \sqrt{\frac{%
AM^{4}+\left( -2\overline{\gamma }^{2}M\pi ^{3}+2A^{2}\overline{\gamma }%
^{2}M\pi ^{3}\right) k+2M^{2}\pi ^{2}S\tau _{1}k^{2}+8\pi ^{4}Sk^{4}}{%
2kM^{3}\pi }},  \label{c_pm}
\end{equation}%
\newline
and this zero eigenvalue has multiplicity $N_{0}\left( c_{\pm }\left(
k\right) \right) \leq 2$. \ Specifically, if we define the polynomial (in $l$%
)%
\begin{equation*}
p\left( l;k\right) :=-AM^{4}+2kl\pi ^{2}S\left( 4\left(
k^{2}+kl+l^{2}\right) \pi ^{2}+M^{2}\tau _{1}\right) ,
\end{equation*}%
$p\left( \cdot ;k\right) $ has a single real root (denoted $l\left( k\right) 
$), and we have $N_{0}\left( c_{\pm }\left( k\right) \right) =2$ if and only
if $l\left( k\right) $ is a positive integer not equal to $k$.
\end{proposition}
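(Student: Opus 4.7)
The plan is to diagonalize $L_c$ in a Fourier basis adapted to our function space. Since $\vec\theta \in H^{2}_{\text{per,odd}}$ and $\vec\gamma \in H^{1}_{\text{per},0,\text{even}}$, both components have vanishing $k=0$ Fourier mode, so the natural basis consists of the pairs $e_1^k := (\sin(k\alpha),0)$ and $e_2^k := (0,\cos(k\alpha))$ for $k\in\mathbb{N}$. Using the Fourier multiplier action of each operator appearing in $L_c$, the two-dimensional subspace $V_k := \operatorname{span}\{e_1^k,e_2^k\}$ is invariant under $L_c$, and the associated $2\times 2$ real matrix has real entries obtained from the Fourier symbols $\widehat{L_{ij}}(k)$ of the previous subsection. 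Its trace equals $1+\lambda_k(c)$ and its determinant equals $\lambda_k(c)$, so its eigenvalues are $1$ and $\lambda_k(c)$. The eigenvector corresponding to $\lambda_k(c)$ within $V_k$ is (after rescaling) the element of $E_\lambda(c)$ listed in the proposition, which can be read off from the complex Fourier eigenvector $v_k(c)$ in (\ref{eigenvector_definition}) by taking real and imaginary parts of $v_k(c)e^{ik\alpha}+v_{-k}(c)e^{-ik\alpha}$. Summing the one-dimensional contributions over $\{k \in \mathbb{N}: \lambda_k(c) = \lambda\}$ yields the multiplicity $N_\lambda(c)$, and the block-diagonality of $L_c$ across the $V_k$ makes algebraic and geometric multiplicities agree.

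Next I solve $\lambda_k(c)=0$ for $c$. Multiplying (\ref{lambda_c_def}) through by $8\pi^{4}Sk^{4}$ clears denominators and produces a quadratic equation in $c$; the quadratic formula then delivers (\ref{c_pm}) exactly, with real roots precisely when the discriminant is non-negative --- which is the condition (\ref{nonstrict_ineq_condition}).

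To count the multiplicity of the zero eigenvalue of $L_{c_\pm(k)}$, set
\begin{equation*}
Q(n;c) := 8\pi^{4}Sn^{4}+2M^{2}\pi^{2}\tau_{1}Sn^{2}+2\pi n\bigl(-c^{2}M^{3}+2Ac\overline{\gamma}M^{2}\pi-\overline{\gamma}^{2}M\pi^{2}\bigr)+AM^{4},
\end{equation*}
so that $\lambda_n(c)=0 \iff Q(n;c)=0$ for $n\neq 0$. If some $l \in \mathbb{N}$ with $l\neq k$ also satisfies $\lambda_l(c_\pm(k))=0$, I would subtract $Q(k;c_\pm(k))=Q(l;c_\pm(k))=0$, divide by the nonzero factor $k-l$, and then eliminate the $c$-dependent bracket using $Q(k;c_\pm(k))=0$ once more. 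Elementary algebra collapses the result to $p(l;k)=0$ exactly, so the ``extra'' zero eigenvalues at $c=c_\pm(k)$ correspond to positive integer roots of $p(\cdot;k)$ distinct from $k$.

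The main technical point, though still routine, is that the cubic $p(\cdot;k)$ has exactly one real root for each $k\in\mathbb{N}$. I would differentiate:
\begin{equation*}
\partial_l p(l;k) = 2k\pi^{2}S\bigl(12\pi^{2}l^{2}+8\pi^{2}kl+4\pi^{2}k^{2}+M^{2}\tau_{1}\bigr),
\end{equation*}
and compute that the discriminant of the inner quadratic in $l$ equals $64\pi^{4}k^{2}-48\pi^{2}(4\pi^{2}k^{2}+M^{2}\tau_{1})=-128\pi^{4}k^{2}-48\pi^{2}M^{2}\tau_{1}<0$. With positive leading coefficient, this forces $\partial_l p>0$ on all of $\mathbb{R}$, so $p(\cdot;k)$ is strictly increasing and has a unique real root $l(k)$. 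Combining with the previous paragraph, $N_{0}(c_\pm(k))=2$ iff $l(k)\in\mathbb{N}\setminus\{k\}$ and $N_{0}(c_\pm(k))=1$ otherwise, which completes the statement.
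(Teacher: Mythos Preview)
Your proof is correct and follows the paper's general outline for the spectral description and the derivation of $c_\pm(k)$, but you take a genuinely different and more elementary route for the key claim that $p(\cdot;k)$ has a unique real root. The paper computes the three roots of the cubic explicitly via Cardano-type formulas (with the aid of Mathematica), introduces auxiliary quantities $B,C,D$, and then argues that two of the roots are non-real by directly examining their imaginary parts and showing $D>0$. Your argument instead differentiates $p$ in $l$, observes that $\partial_l p(l;k)=2k\pi^2 S\bigl(12\pi^2 l^2+8\pi^2 kl+4\pi^2 k^2+M^2\tau_1\bigr)$ is a quadratic in $l$ with negative discriminant and positive leading coefficient, hence strictly positive, so $p$ is strictly increasing with exactly one real root. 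This is cleaner, avoids any appeal to computer algebra, and sidesteps the need to verify positivity of the somewhat intricate expression $D$ (which involves terms of both signs when $A<0$). Likewise, your derivation of $p(l;k)=0$ by subtracting $Q(k;c_\pm(k))=Q(l;c_\pm(k))$, dividing by $k-l$, and eliminating the $c$-dependent bracket via $Q(k;c_\pm(k))=0$ is a transparent hand computation, whereas the paper simply records the factored form of $\lambda_l(c_\pm(k))$ as Mathematica output. Both approaches reach the same conclusion; yours is more self-contained.
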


\begin{proof}
The point spectrum of $L_{c}$, along with each $E_{\lambda }\left( c\right) $%
, was explicitly calculated above. \ The fact that the the geometric and
algebraic multiplicities are equal follows from the even/odd considerations
of the eigenfunctions. \ 

The result (\ref{c_pm}) follows from an easy computation, as $\lambda
_{k}\left( c\right) $ is quadratic in $c$. \ Next, we wish to make
statements about the multiplicity of the zero eigenvalue $\lambda _{k}\left(
c_{\pm }\left( k\right) \right) $. \ Using (\ref{lambda_c_def}) and (\ref{c_pm}), we compute%
\begin{equation}
\lambda _{l}\left( c_{\pm }\left( k\right) \right) =-\frac{\left( k-l\right)
\left( -AM^{4}+2kl\pi ^{2}S\left( 4\left( k^{2}+kl+l^{2}\right) \pi
^{2}+M^{2}\tau _{1}\right) \right) }{8kl^{4}\pi ^{4}S}. \label{lambdalk}
\end{equation}
Obviously, $\lambda _{l}\left( c_{\pm }\left( k\right) \right) =0$ when $l=k$%
. \ The other factor in the numerator of $\lambda _{l}\left( c_{\pm }\left(
k\right) \right) $ is precisely the polynomial $p$ defined above, which is
cubic in $l$. \ Using Mathematica, we compute its three roots, and label them $%
l_{1}\left( k\right) $, $l_{2}\left( k\right) $, $l_{3}\left( k\right) $: \ 
\begin{eqnarray*}
l_{1}\left( k\right) &:=&\frac{-B+4k^{8/3}\pi ^{4/3}S^{5/3}\left( C+3\sqrt{3}%
\sqrt{D}\right) ^{1/3}+\left( 2k^{2}S^{2}\left( C+3\sqrt{3}\sqrt{D}\right)
\right) ^{2/3}}{12\pi ^{4/3}\left( kS\right) ^{5/3}\left( C+3\sqrt{3}\sqrt{D}%
\right) ^{1/3}}, \\
l_{2}\left( k\right) &:=&\frac{zB-8k^{8/3}\pi ^{4/3}S^{5/3}\left( C+3\sqrt{3}%
\sqrt{D}\right) ^{1/3}-\overline{z}\left( 2k^{2}S^{2}\left( C+3\sqrt{3}\sqrt{%
D}\right) \right) ^{2/3}}{24\pi \left( kS\right) ^{5/3}\left( C+3\sqrt{3}%
\sqrt{D}\right) ^{1/3}}, \\
l_{3}\left( k\right) &:=&\overline{l_{2}\left( k\right) },
\end{eqnarray*}%
\newline
where
\begin{eqnarray*}
z &:=&1+i\sqrt{3}, \\
B &:=&2\cdot 2^{1/3}k^{2}S^{2}\left( 8\pi ^{8/3}k^{2}+3M^{2}\pi ^{2/3}\tau
_{1}\right), \\
C &:=&27AM^{4}+56k^{4}\pi ^{4}S+18k^{2}M^{2}\pi ^{2}S\tau _{1}, \\
D &:=&27A^{2}M^{2}+4k^{2}\pi ^{2}S^{2}\left( 3k^{2}\pi ^{2}+M^{2}\tau
_{1}\right) \left( 4k^{2}\pi ^{2}+M^{2}\tau _{1}\right) ^{2}\\
&&+4Ak^{2}M^{4}\pi
^{2}S\left( 28k^{2}\pi ^{2}+9M^{2}\tau _{1}\right).
\end{eqnarray*}%
We see $D>0$ given the nature of the constants in our problem; also, we have 
$B,C\in 
\mathbb{R}
$, so $l_{1}\left( k\right) $ is real. \ For $l_{2}\left( k\right) $ to be
real, we would need 
\begin{equation*}
\func{Im}\left[ zB-\overline{z}\left( 2k^{2}S^{2}\left( C+3\sqrt{3}\sqrt{D}%
\right) \right) ^{2/3}\right] =0.
\end{equation*}%
But,%
\begin{eqnarray*}
&&\func{Im}\left[ zB-\overline{z}\left( 2k^{2}S^{2}\left( C+3\sqrt{3}\sqrt{D}%
\right) \right) ^{2/3}\right] \\
&=&\sqrt{3}\left[ B+\left( 2k^{2}S^{2}\left( C+3\sqrt{3}\sqrt{D}\right)
\right) ^{2/3}\right]
\end{eqnarray*}
Thus, for $l_{2}\left( k\right) $ to be real, we would need%
\begin{equation*}
B=-\left( 2k^{2}S^{2}\left( C+3\sqrt{3}\sqrt{D}\right) \right) ^{2/3},
\end{equation*}
Since $S>0,k>0$, we see that $B$ is always positive, yet the right-hand-side
is always negative (recall that $D>0$). \ Thus, $l_{2}\left( k\right) $
(and, subsequently, $l_{3}\left( k\right) $ as well) necessarily has nonzero
imaginary part, and hence cannot be an integer. \ Therefore, when counting
the mulitplicity of zero eigenvalues of $L_{c}$, we only need to consider
the real root $l_{1}\left( k\right) $, which we label as $l\left( k\right) $%
. \ Given $k$ such that (\ref{nonstrict_ineq_condition}) holds, we have
that $N_{0}\left( c_{\pm }\left( k\right) \right) \leq 2$, since $p$ has one
real root $l\left( k\right) $. \ 

If $l\left( k\right) \neq k $ is a positive
integer, then we clearly have $N_{0}\left( c_{\pm }\left( k\right) \right)
=2 $ (since in this case both $k$ and $l\left( k\right) $ -- and only these
two positive integers -- correspond to the same zero eigenvalue).  Conversely, if $N_{0}\left( c_{\pm }\left( k\right) \right) = 2$, then the right-hand-side of (\ref{lambdalk}) must have a positive integer root $l \neq k$, and we established that such $l$ must be the real root $l(k)$ of the polynomial $p$.
\end{proof}

With this spectral information at hand, we are now ready to make some
statements about the crossing number of $L_{c}$.

\subsubsection{Necessary and sufficient conditions for odd crossing number}

\begin{proposition}
\label{odd_crossing_number_prop}Fix constants $A\in \left[ -1,1\right] ,%
\overline{\gamma }\in 
\mathbb{R}
$ and $S,\tau _{1},M>0$. \ Define the mapping $\left( \theta -\Theta ,\gamma
-\Gamma \right) $ as before, and let $L_{c}$ be its linearization at $\left(
0,0;c\right) $. \ Given fixed $k$, define $c_{\pm }\left( k\right) $ and $%
l\left( k\right) $ as in Proposition \ref{spectral_proposition}. \ Further,
put%
\begin{eqnarray}
&& \label{def_of_K} \\
K :=\{ k\in 
\mathbb{N}
&:&AM^{4}+\left( -2\overline{\gamma }^{2}M\pi ^{3}+2A^{2}\overline{\gamma }^{2}M\pi ^{3}\right)
k+2M^{2}\pi ^{2}S\tau _{1}k^{2}+8\pi ^{4}Sk^{4}>0 \notag \\ 
&&\text{ and }l\left(
k\right) \notin 
\mathbb{N}
\setminus \left\{ k\right\} \} \notag.  
\end{eqnarray}%
Then, $L_{c}$ has an odd crossing number (specifically, the crossing number
is one) at $c=c_{\pm }\left( k\right) $ (which is real) if and only if $k\in
K$. \ Furthermore, $\left\vert K\right\vert =\infty $. \ 
\end{proposition}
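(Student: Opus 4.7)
The plan is to leverage the complete spectral picture of $L_c$ provided by Proposition~\ref{spectral_proposition}: its eigenvalues are $\{1\}\cup\{\lambda_k(c)\}_{k\in\mathbb{N}}$, each $\lambda_k$ being a downward-opening quadratic in $c$ (the coefficient of $c^2$ in \eqref{lambda_c_def} is negative). The two conditions in the definition of $K$ encode precisely (i) the strict form of \eqref{nonstrict_ineq_condition}, which ensures $c_\pm(k)$ are two distinct real simple roots of $\lambda_k$, and (ii) $l(k)\notin\mathbb{N}\setminus\{k\}$, which ensures that $k$ is the only Fourier mode producing a zero eigenvalue at $c=c_\pm(k)$. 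Together these will force the zero eigenvalue of $L_{c_\pm(k)}$ to be algebraically simple and the crossing transversal, from which crossing number equal to one will be immediate.

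To carry out the forward direction ($k\in K$ implies odd crossing number one), I will first argue regularity of $L_c$ on a punctured neighborhood of $c_\pm(k)$: since all other $\lambda_j(c)$ are continuous in $c$, are nonzero at $c=c_\pm(k)$ by (ii), and accumulate at $1$ as $j\to\infty$, they remain bounded away from zero nearby, while $\lambda_k$ itself is nonzero except at $c=c_\pm(k)$. Transversality of $\lambda_k$ at its simple root (nonzero derivative there, by the distinct-roots property of the quadratic) then forces a single real eigenvalue to cross zero as $c$ moves through $c_\pm(k)$. The count in Definition~\ref{odd_crossing_number_def} consequently flips sign, yielding crossing number one.

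For the converse ($k\notin K$ implies crossing number not odd), I split into two subcases. If the left-hand side of \eqref{nonstrict_ineq_condition} is strictly negative, $c_\pm(k)$ is non-real and no mode-$k$ eigenvalue vanishes at any real $c$. If that quantity equals zero, $c_+(k)=c_-(k)$ is a double root of the downward parabola $\lambda_k$, whose maximum is therefore $0$, so $\lambda_k\le 0$ throughout a neighborhood; the perturbed eigenvalue remains nonpositive on both sides with unchanged algebraic multiplicity, so $\sigma^<$ does not switch. If instead the strict inequality holds but $l(k)\in\mathbb{N}\setminus\{k\}$, the factorization in \eqref{lambdalk} shows $\lambda_{l(k)}$ also vanishes at $c_\pm(k)$; applying the same transversality argument to both $\lambda_k$ and $\lambda_{l(k)}$ gives $N_0=2$ with two simultaneous sign changes, so $\sigma^<$ picks up $(-1)^2=+1$ and is unchanged.

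The last claim, $|K|=\infty$, I expect to be the main obstacle. The first condition is easy: for large $k$ the $8\pi^4 S k^4$ term dominates \eqref{nonstrict_ineq_condition}, so the strict inequality holds beyond some $k_1$. For the second condition, I would show $l(k)\to 0$ as $k\to\infty$ by asymptotic analysis of the cubic $p(l;k)=-AM^4+2kl\pi^2 S[4(k^2+kl+l^2)\pi^2+M^2\tau_1]$. Postulating $l=o(1)$, the leading balance reduces to $-AM^4+8k^3\pi^4 S\,l+O(kl)=0$, giving $l(k)=AM^4/(8k^3\pi^4 S)+o(k^{-3})$; since Proposition~\ref{spectral_proposition} already identifies the unique real root of $p(\,\cdot\,;k)$, this asymptotic formula must describe it, so $l(k)\in(-1,1)$ for all sufficiently large $k$ and in particular cannot be a positive integer. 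Combining both asymptotic facts places a cofinite subset of $\mathbb{N}$ inside $K$, proving $|K|=\infty$.
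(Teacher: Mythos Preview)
Your proposal is correct and follows essentially the same route as the paper: exploit the quadratic dependence of each $\lambda_k$ on $c$ from \eqref{lambda_c_def}, use the two conditions defining $K$ to pin down simplicity of the zero eigenvalue and transversality of the crossing, and show $l(k)\to 0$ as $k\to\infty$ (the paper gets the same leading term $AM^4/(8\pi^4 S k^3)$) for the cofiniteness of $K$.

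One point to tighten in your converse $N_0=2$ case: to invoke ``the same transversality argument'' for $\lambda_{l(k)}$ at $c_\pm(k)$ you need $c_\pm(k)$ to be a \emph{simple} root of $\lambda_{l(k)}$ as well, and the distinct-roots property you cited was established only for $\lambda_k$. The fix is cheap: from \eqref{c_pm} (or directly from \eqref{lambda_c_def}) every parabola $\lambda_j(\cdot)$ has the same axis of symmetry $c=A\bar\gamma\pi/M$, so a double root of $\lambda_{l(k)}$ could only sit at that vertex --- but the strict inequality you are already assuming for $k$ forces $c_\pm(k)\neq A\bar\gamma\pi/M$. The paper handles this point instead by explicitly expanding $\lambda_{l(k)}(c_\pm(k)+\varepsilon)$ and observing that its first-order coefficient carries the \emph{same} radicand (the one in $k$, not $l(k)$) as that of $\lambda_k(c_\pm(k)+\varepsilon)$, so the two eigenvalues either both change sign or neither does.
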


\begin{proof}
First, assume $k\in K$. \ The first condition in the definition of $K$
ensures (\ref{nonstrict_ineq_condition}) holds in Proposition \ref%
{spectral_proposition}, so we have that $c=c_{\pm }\left( k\right) $ is
real, and yields a zero eigenvalue of $L_{c}$, so $N_{0}\left( c_{\pm
}\left( k\right) \right) \geq 1$. \ The second condition ensures that $%
N_{0}\left( c_{\pm }\left( k\right) \right) <2$ by the last conclusion of
Proposition \ref{spectral_proposition}. \ Thus, $N_{0}\left( c_{\pm }\left(
k\right) \right) =1$. \ 

For a direct calculation of the crossing number, we examine a perturbation of this zero eigenvalue, computed via Mathematica:
\begin{eqnarray}
&&\lambda _{k}\left( c_{\pm }\left( k\right) +\varepsilon \right) \label{perturbedeigk}\\ 
&=&\mp \frac{M^{3}}{2k^{3}\pi ^{3}S}\sqrt{\frac{AM^{4}+\left( -2\overline{\gamma }^{2}M\pi
^{3}+2A^{2}\overline{\gamma }^{2}M\pi ^{3}\right) k+2M^{2}\pi ^{2}S\tau
_{1}k^{2}+8\pi ^{4}Sk^{4}}{2kM^{3}\pi }}\varepsilon \notag \\
&&-\frac{M^{3}}{4k^{3}\pi
^{3}S}\varepsilon ^{2}.  \notag
\end{eqnarray}%
Note that this expression is exact, since from (\ref{lambda_c_def}) we have that $\lambda_k \left(c\right)$ is quadratic in $c$. \ Also, in the leading-order term of (\ref{perturbedeigk}), the expression under the radical is
identical to that which is under the radical of (\ref{c_pm}); yet, we have a strict inequality in the
first condition in the definition of $K$. \ Thus, since this leading-order
coefficient is positive, we see that $\lambda _{k}\left( c_{\pm }\left(
k\right) +\varepsilon \right) $ changes sign as $\varepsilon $ passes over
zero. \ Since we have a multiplicity of one, we see that there is an odd crossing
number at $c=c_{\pm }\left( k\right) $ from a direct application of
Definition \ref{odd_crossing_number_def} (and this crossing number is in
fact one, as exactly one eigenvalue, counted with multiplicity, is changing
sign as $\varepsilon $ passes over zero. \ 

Now, assume an odd crossing number at a real $c=c_{\pm }\left( k\right) $
for some $k\in 
\mathbb{N}
$. \ By definition of $c_{\pm }\left( k\right) $, we have $N_{0}\left(
c_{\pm }\left( k\right) \right) \geq 1$, and by Proposition \ref%
{spectral_proposition}, we have $N_{0}\left( c_{\pm }\left( k\right) \right)
\leq 2$. \ Thus, either $N_{0}\left( c_{\pm }\left( k\right) \right) =1$ or $%
N_{0}\left( c_{\pm }\left( k\right) \right) =2$. \ We show that the former
implies $k\in K$, and that the latter leads to a contradiction. \ In the
case $N_{0}\left( c_{\pm }\left( k\right) \right) =1$, we necessarily have $%
l\left( k\right) \notin 
\mathbb{N}
\setminus \left\{ k\right\} $; otherwise, the multiplicity would be $2$. \
Morover, $c_{\pm }\left( k\right) $ must be real, and since the crossing
number is odd, $\lambda _{k}\left( c_{\pm }\left( k\right) +\varepsilon
\right) $ (which is the only eigenvalue perturbing from the zero eigenvalue,
assumed in this case to be of multiplicity 1) must change sign as $%
\varepsilon $ passes over $0$. Thus, by the same calculation (\ref{perturbedeigk}), we need the leading-order term to be nonzero,
which forces the strict inequality in the first condition in the definition
of $K$. \ Thus, we have $k\in K$.

If $N_{0}\left( c_{\pm }\left( k\right) \right) =2$, then clearly $l\left(
k\right) \neq k$ is a positive integer. \ A Mathematica calculation shows%
\begin{eqnarray}
&&\;\;\; \lambda _{l\left( k\right) }\left( c_{\pm }\left( k\right) +\varepsilon
\right) \label{perturb_eig_lk}\\
&=&-\frac{\left(k - l(k)\right)\left(-AM^4+2k\,l(k) \pi^2S\left(4\left(k^2 + k\, l(k) + [l(k)]^2\right)\pi^2 + M^2 \tau_1\right)\right)}{8k\, [l(k)]^4 \pi^4 S} \notag \\
&& \mp \frac{M^3}{2 [l(k)]^3\pi^3S}\sqrt{\frac{AM^{4}+\left( -2\overline{\gamma }^{2}M\pi
^{3}+2A^{2}\overline{\gamma }^{2}M\pi ^{3}\right) k+2M^{2}\pi ^{2}S\tau
_{1}k^{2}+8\pi ^{4}Sk^{4}}{2kM^{3}\pi }} \varepsilon \notag \\
&&-\frac{M^3}{4 [l(k)]^3 \pi^3 S} \varepsilon^2. \notag
\end{eqnarray}%
As expected, the zeroth-order (in $\varepsilon$) term of (\ref{perturb_eig_lk}) is precisely $\lambda_{l(k)}\left(c_\pm(k)\right)$ (see (\ref{lambdalk})), which vanishes by definition of $l(k)$. \ Then, we can see that the first-order term of (\ref{perturb_eig_lk}) is real and nonzero if and only if the first-order term of (\ref{perturbedeigk}) is real and nonzero. \ Thus, as $\varepsilon $
passes over zero, either (i) both $\lambda _{k}\left( c_{\pm }\left(
k\right) +\varepsilon \right) $ and $\lambda _{l\left( k\right) }\left(
c_{\pm }\left( k\right) +\varepsilon \right) $ change signs, or (ii) neither
of them change signs. \ In either case, we cannot have an odd crossing
number, which contradicts our assumption.

Finally, we need to show that $\left\vert K\right\vert =\infty $%
. \ First, we see that $8\pi ^{4}S$ is positive, so for sufficiently large $k$,
the first condition in the definition of $K$ is satisfied. \ For the second
condition, we need to check the behavior of $l\left( k\right) $. \ Perform
the subsitution $k=1/\delta $ ($\delta >0$), and examine the Taylor series
expansion of $l\left( 1/\delta \right) $ about $\delta =0$ (computed via
Mathematica):%
\begin{equation*}
l\left( \frac{1}{\delta }\right) =\frac{AM^{4}}{8\pi ^{4}S} \delta ^{3}
+O\left( \delta ^{5}\right) .
\end{equation*}%
We see that%
\begin{equation*}
\lim_{\delta \rightarrow 0}l\left( \frac{1}{\delta }\right) =0,
\end{equation*}%
\newline
so%
\begin{equation*}
\lim_{k\rightarrow \infty }l\left( k\right) =0
\end{equation*}%
as well. \ Thus, for sufficiently large $k$, the second condition in the
definition of $K$ holds. \ Since both the first and second conditions hold
for sufficiently large $k$, we have that $\left\vert K\right\vert =\infty $.
\ 
\end{proof}

\begin{remark}
There are other, ``indirect" methods for providing sufficient conditions for
an odd crossing number. \ One result, which appears in \cite{KielhoferBifurcationBook},
states that if $0$ is a geometrically simple eigenvalue of $D_{x}F\left(
0,c_{0}\right) $, and%
\begin{equation}
\left( Q\left[ \frac{d}{dc}D_{x}F\left( 0,c\right) \right] _{c=c_{0}}\right)
v\neq 0,  \label{indirect_condition}
\end{equation}%
\newline
where $v$ is an arbitrary element of the null space of $D_{x}F\left(
0,c_{0}\right) $, and $Q$ is the projection onto the cokernel of $%
D_{x}F\left( 0,c_{0}\right) $ (note the dimensions in this case are such
that the left-hand side is a scalar quantity), then $D_{x}F\left( 0,c\right) 
$ has an odd crossing number at $c=c_{0}$. \ For our problem, we note that
for matrices $A$, coker$(A)=\ker \left( A^{\ast }\right) $, and thus we
examine 
\begin{equation}
w_{k}\left( c_{0}\right) \cdot \left( \left[ \frac{d}{dc}\widehat{L_{c}}\left(
k\right) \right] _{c=c_{0}}v_{k}\left( c_{0}\right) \right)
\label{indirect_method_condition}
\end{equation}%
\newline
where $v_{k}\left( c_{0}\right) $ is as defined in (\ref%
{eigenvector_definition}), $w_{k}\left( c_{0}\right) $ is the eigenvector
corresponding to the zero eigenvalue of $\left[ \widehat{L_{c_{0}}}\left(
k\right) \right] ^{\ast },$ and the dot indicates the usual Euclidean dot
product. \ 

We arrive at precisely the same condition for (\ref%
{indirect_method_condition}) to be nonzero as the strict inequality in the
the definition of $K$ (\ref{def_of_K}). \ This, coupled with the assumption
that the null space is one-dimensional, allows us to conclude that if $k\in
K $, then the crossing number is odd. \ Unfortunately, (\ref%
{indirect_condition}) is stated as only a sufficient condition for odd
crossing number, and so we could not conclude the ``only if" direction of
Proposition \ref{odd_crossing_number_prop} by using this method alone. \
Because of this, and because the nature of the problem fortunately allowed
for the method to be manageable and conclusive, we favored a ``direct"
verification of an odd crossing number (namely calculations (\ref%
{perturbedeigk}) and (\ref{perturb_eig_lk})). \ 
\end{remark}

\subsubsection{Global bifurcation conclusion}

We can now apply Theorem \ref{general_bifurcation_theorem}, and cast the
conclusions of this abstract theorem in the language of our problem.

\begin{theorem}
\label{global_bifurc_theorem_for_problem}Define $U_{0,0}:=\cup
_{b,h>0}U_{b,h} $, where $U_{b,h}$ is as defined in (\ref{Ubh_definition}). 
Let $S\subseteq U_{0,0}$ be the closure (in $%
H_{\text{per},\text{odd}}^{2}\times H_{\text{per},0,\text{even}}^{1}\times 
\mathbb{R}
$) of the set of nontrivial (i.e. $\theta, \gamma _{1}$ are not both zero)
solutions of the traveling wave problem $\left( \theta -\Theta \left( \theta
,\gamma _{1};c\right) ,\gamma _{1}-\Gamma \left( \theta ,\gamma
_{1};c\right) \right) = 0 $. \ Furthermore, let $c_{\pm }\left(
k\right) $ and $K$ be as in Propositions \ref{spectral_proposition} and \ref{odd_crossing_number_prop}. \ For fixed $k\in K$, define $C_{\pm }\left(
k\right) $ to be the connected component of $S$ that contains $\left(
0,0;c_{\pm }\left( k\right) \right) $. \ Then\textit{, one of the following
alternatives is valid:}\\

\begin{enumerate}[label={\upshape(\Roman*):}]
\item $C_{\pm }\left( k\right) $ is unbounded; or

\item $C_{\pm }\left( k\right) =C_{+}\left( j\right) $ or $C_{\pm
}\left( k\right) =C_{-}\left( j\right) $ for some $j\in K$ with $%
j\neq k$; or 

\item $C_{\pm }\left( k\right) $ contains a point on the
boundary of $U_{0,0}$.\\
\end{enumerate}
\end{theorem}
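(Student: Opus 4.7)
The plan is to apply the abstract global bifurcation theorem (Theorem \ref{general_bifurcation_theorem}) to the map $F(\theta,\gamma_1;c) := \bigl(\theta - \Theta(\theta,\gamma_1;c),\, \gamma_1 - \Gamma(\theta,\gamma_1;c)\bigr)$ on the Banach space $X = H^2_{\text{per},\text{odd}} \times H^1_{\text{per},0,\text{even}}$, with open set $U = U_{0,0}$ and bifurcation point $c_0 = c_{\pm}(k)$ for $k \in K$. The three hypotheses of the abstract theorem have essentially all been verified in the preceding subsections, so the proof is mostly an assembly step.

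First I would note that $F(0,0;c)=0$ for all $c$, which is immediate from the definitions of $\Theta$ and $\Gamma$. Second, compactness of $F - \text{id} = -(\Theta,\Gamma)$ from $U_{0,0}$ into $X$ is precisely the content of Proposition \ref{mapping_proposition} (applied over each $U_{b,h}$, whose union is $U_{0,0}$). Third, continuity of $D_\xi F(0,\cdot) \in C(\mathbb{R}, L(X,X))$ follows from inspection of the explicit matrix expressions for $L_c$ derived in the linearization subsection: each entry $L_{ij}$ is an affine or quadratic polynomial in $c$ with coefficients that are bounded linear operators on $X$, so $c \mapsto L_c$ is continuous (indeed smooth). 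Fourth, the odd crossing number condition at $c = c_{\pm}(k)$ for $k \in K$ is exactly Proposition \ref{odd_crossing_number_prop}. With these four ingredients the hypotheses of Theorem \ref{general_bifurcation_theorem} (together with the remark permitting general open $U$) are met.

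Applying the abstract theorem yields a connected component $C_{\pm}(k) \subset S$ of nontrivial solutions emanating from $(0,0;c_{\pm}(k))$ satisfying one of: (i) $C_{\pm}(k)$ is unbounded; (ii) $C_{\pm}(k)$ contains another trivial point $(0,0;c_1)$ with $c_1 \ne c_{\pm}(k)$; or (iii) $C_{\pm}(k)$ meets the boundary of $U_{0,0}$. Alternatives (i) and (iii) translate directly to the stated (I) and (III). For (ii), the main task is to identify $c_1$ as some $c_{\pm}(j)$ with $j \in K$, $j \ne k$. Because $(0,0;c_1)$ lies in the closure of the nontrivial solution set, the implicit function theorem forces $L_{c_1}$ to have a nontrivial kernel (otherwise $F$ is locally invertible in $\xi$ and $(0,0;c_1)$ would be isolated among solutions). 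By Proposition \ref{spectral_proposition}, $L_{c_1}$ has a zero eigenvalue only at $c_1 = c_{\pm}(j)$ for some $j \in \mathbb{N}$, and the membership $j \in K$ is read off from the characterization of $K$ in Proposition \ref{odd_crossing_number_prop}; equality of connected components $C_{\pm}(k) = C_{\pm}(j)$ then follows since both sets are connected components of $S$ containing the common point $(0,0;c_1)$.

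The main obstacle, and the only nonroutine step, is the translation of the abstract alternative (ii) into the structural statement (II) about $C_{\pm}(j)$. The subtlety is ensuring that a secondary trivial point on the branch is genuinely of the form $c_{\pm}(j)$ for some $j \in K$, as opposed to a point where the linearization has a higher-dimensional kernel ($j \notin K$) that our hypotheses do not address; handling the latter case requires the observation from the proof of Proposition \ref{odd_crossing_number_prop} that $j \notin K$ occurs only at the discrete exceptional values where $l(k)$ becomes integer-valued, which (together with our one-dimensional kernel assumption at $c_{\pm}(k)$) we may exclude along the branch. All other bookkeeping—continuity of $L_c$, verifying the domain is open, checking well-definedness of $\Theta,\Gamma$ on $U_{0,0}$—is immediate from the earlier propositions.
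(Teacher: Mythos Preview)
Your approach is the same as the paper's---assemble Propositions \ref{mapping_proposition} and \ref{odd_crossing_number_prop} and invoke Theorem \ref{general_bifurcation_theorem}---but there is one technical point on which the paper is more careful. You apply the abstract theorem directly on $U_{0,0}$, citing Proposition \ref{mapping_proposition} for compactness of $(\Theta,\Gamma)$. That proposition, however, only establishes compactness on each $U_{b,h}$ with $b,h>0$ fixed; a bounded subset of $U_{0,0}$ need not lie in any single $U_{b,h}$ (the chord-arc constant or $\overline{\cos\theta}$ may degenerate toward zero along such a set), so compactness on all of $U_{0,0}$ does not follow for free. The paper instead applies Theorem \ref{general_bifurcation_theorem} on each $U_{b,h}$ separately: alternatives (i) and (ii) there give (I) and (II) directly, and alternative (iii)---the branch reaches $\partial U_{b,h}$---yields (III) after taking the union over all $b,h>0$ via a short topological argument.

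On your treatment of the abstract alternative (ii): the paper simply asserts that it ``corresponds exactly'' with (II) and offers no further detail. Your implicit-function-theorem observation that $L_{c_1}$ must be singular at any second trivial point is a genuine addition beyond what the paper writes. The further worry about whether the resulting $j$ lies in $K$ is not addressed in the paper's proof either, so you need not resolve it to match the paper's argument.
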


\begin{proof}
Assume $k\in K$. \ Given $b,h>0$, we have by Proposition \ref{mapping_proposition} that $%
\left( \Theta ,\Gamma \right) $ is compact on $U_{b,h}$, and by Proposition %
\ref{odd_crossing_number_prop}, there is an odd crossing number of the
linearization at $c=c_{\pm }\left( k\right) $. \ Since the conditions for
Theorem \ref{general_bifurcation_theorem} are met, we can conclude that one
of outcomes (i), (ii), (iii) in the conclusion of this theorem occur (using $%
U=U_{b,h}$). \ The outcomes (i) (ii) correspond exactly with outcomes (I),
(II) above. \ Taking a union over all $b,h>0$, outcome (iii) yields outcome
(III) above via a simple topological argument.
\end{proof}

\subsection{Proof of main theorem}

We are now ready to prove Theorem \ref{main_theorem}. \ With the bifurcation
results of Theorem \ref{global_bifurc_theorem_for_problem} at hand, this
proof will largely be comprised of matching the outcomes (I) -- (III) above
with the outcomes (a) -- (e) in Theorem \ref{main_theorem}. \ Many of these
conclusions can be reached through arguments identical to (or closely analogous
to) those in \cite{AmbroseStraussWrightGlobBifurc}.

\begin{proof}
First, note that Proposition \ref{odd_crossing_number_prop} gives us that $%
\left\vert K\right\vert =\infty $; hence, by Theorem \ref%
{global_bifurc_theorem_for_problem}, we have a countable number of connected
sets of the form $C_{\pm }\left( k\right) $ ($k\in K$) that satisfy one of
outcomes (I) -- (III). \ 

We can immediately see that Outcome (II) means (e).

Next, consider Outcome (III). \ Recall the definition of $U_{0,0}$ in
Theorem \ref{global_bifurc_theorem_for_problem}, and note that we can write%
\begin{eqnarray*}
U_{0,0}=\left\{ \left( \theta ,\gamma _{1};c\right) \in X
: \overline{\cos \theta }>0,\widetilde{Z}\left[ \theta \right] \in C_{0}^{2} \text{ and }\widetilde{Z}\left[ \Theta \left( \theta ,\gamma _{1};c\right) %
\right] \in C_{0}^{5}\right\},
\end{eqnarray*}%
where
\begin{equation*}
X = H_{\text{per},\text{odd}}^{2}\times H_{\text{per},0,\text{even}}^{1}\times 
\mathbb{R}.
\end{equation*}
Outcome (III) means either $\overline{\cos \theta }=0$ or $\widetilde{Z}%
\left[ \theta \right] =\widetilde{Z}\left[ \Theta \left( \theta ,\gamma
_{1};c\right) \right] \notin C_{0}^{5}$. \ As in \cite{AmbroseStraussWrightGlobBifurc}, 
$\overline{\cos \theta }=0$ implies (a). \ If $\widetilde{Z}\left[ \theta %
\right] \notin C_{0}^{5}$, we have that the interface self-intersects by the
same argument as in \cite{AmbroseStraussWrightGlobBifurc} (the $s=5$ regularity
does not affect this argument). \ This is outcome (d). \ 

Outcome (I) (i.e. unboundedness of the solution set) can lead to more
outcomes in this main theorem. \ If $C_{\pm }\left( k\right) $ is unbounded,
then it contains a sequence of solutions $\left( \theta _{n},\gamma
_{1,n};c_{n}\right) $ in $U$ such that%
\begin{equation}
\lim_{n\rightarrow \infty }\left( \left\vert c_{n}\right\vert +\left\Vert
\theta _{n}\right\Vert _{H_{\text{per}}^{2}}+\left\Vert \gamma _{1,n}\right\Vert
_{H_{\text{per}}^{1}}\right) =\infty. \label{diverging_norms}
\end{equation}
We first note that, as in \cite{AmbroseStraussWrightGlobBifurc}, if (a) does not hold,
then $\sigma _{n}$ is bounded above independently of $n$. \ For the
remainder of this proof, assume (a) does not hold, and hence $\sigma _{n}$
is bounded above independently of $n$. \ At least one of the three terms of (\ref{diverging_norms}) must diverge; we subsequently examine each case.

If $\left\vert c_{n}\right\vert
\rightarrow \infty $, yet $\left\Vert \theta _{n}\right\Vert
_{H_{\text{per}}^{2}}+\left\Vert \gamma _{1,n}\right\Vert _{H_{\text{per}}^{1}}$ is
bounded, then either $\overline{\gamma }\neq 0$ or $A\neq 0$ are violated as
in \cite{AmbroseStraussWrightGlobBifurc} (yet $\left\vert c_{n}\right\vert \rightarrow \infty $ is
outcome (f), which may possible, independent of the other outcomes, when $%
\overline{\gamma }=0$ and $A=0$). \ To see this, first recall that (\ref{normal_equation_new2})
implies%
\begin{equation*}
\left\Vert c_{n}\sin \left( \theta _{n}\right) \right\Vert
_{H^{2}}=\left\Vert \func{Re}\left( W_{n}^{\ast }N_{n}\right) \right\Vert
_{H^{2}}.
\end{equation*}%
Lemma 5 of \cite{AmbroseStraussWrightGlobBifurc} shows us that the right-hand side is
bounded, so we have that $c_{n}\sin \left( \theta _{n}\right) $ is bounded
in $H^{2}$. \ Since by assumption $\left\vert c_{n}\right\vert \rightarrow
\infty ,$ this forces $\left\Vert \sin \left( \theta _{n}\right) \right\Vert
_{H_{\text{per}}^{2}}$ to approach $0$; then, by Sobolev embedding, we have $\sin
\left( \theta _{n}\right) \rightarrow 0$ uniformly. \ Thus, $\theta _{n}$
must converge to a multiple of $\pi $; but, by continuity and the fact that $%
\theta $ is an odd function, we have $\theta _{n}\rightarrow 0$ uniformly,
and clearly $\left\vert \cos \left( \theta _{n}\right) \right\vert
\rightarrow 1$ (uniformly) as well. \ 

Now, recall (\ref{tan_equation2}):
\begin{eqnarray*}
0 &=&-\frac{S}{\tau _{1}\sigma ^{2}}\left( \partial _{\alpha }^{4}\theta +%
\frac{3\theta _{\alpha }^{2}\theta _{\alpha \alpha }}{2}-\tau _{1}\sigma
^{2}\theta _{\alpha \alpha }\right) -\frac{2\widetilde{A}\sigma }{\tau _{1}}%
\left( \cos \theta \right) _{\alpha } \\
&&+\frac{1}{\tau _{1}}\left( \left( c\cos \theta -\func{Re}\left( W^{\ast
}T\right) \right) \gamma \right) _{\alpha } \\
&&-\frac{A}{\tau _{1}}\left( \frac{%
\partial _{\alpha }\left( \gamma ^{2}\right) }{4\sigma }+2\sigma \sin \theta
+\sigma \partial _{\alpha }\left\{ \left( c\cos \theta -\func{Re}\left(
W^{\ast }T\right) \right) ^{2}\right\} \right),
\end{eqnarray*}%
or%
\begin{eqnarray*}
0 &=&S\left( -\frac{\partial _{\alpha }^{4}\theta }{\sigma ^{2}}-\frac{%
3\theta _{\alpha }^{2}\theta _{\alpha \alpha }}{2\sigma ^{2}}+\tau
_{1}\theta _{\alpha \alpha }\right) -2\widetilde{A}\sigma \left( \cos \theta
\right) _{\alpha } \\
&&+\left( \left( c\cos \theta -\func{Re}\left( W^{\ast }T\right) \right)
\gamma \right) _{\alpha } \\
&&-2A\left( \frac{\partial _{\alpha }\left( \gamma
^{2}\right) }{\sigma }+\sigma ^{2}\sin \theta +\frac{1}{2}\sigma \partial
_{\alpha }\left\{ \left( c\cos \theta -\func{Re}\left( W^{\ast }T\right)
\right) ^{2}\right\} \right) .
\end{eqnarray*}%
We integrate twice with respect to $\alpha $, and substitute our sequences of
solutions:%
\begin{eqnarray*}
0 &=&S\left( -\frac{\partial _{\alpha }^{2}\theta _{n}}{\sigma _{n}^{2}}%
-\int\limits^{\alpha }\frac{\left( \partial _{\alpha }\theta _{n}\right)
^{3}}{2\sigma ^{2}}\text{ }d\alpha +\tau _{1}\theta _{n}\right) -2\widetilde{%
A}\sigma _{n}\int\limits^{\alpha }\cos \theta _{n}d\alpha \\
&&+\int\limits^{\alpha } \left( c_{n}\cos \theta _{n}-\func{Re}%
\left( W^{\ast }T\right) \right) \gamma _{n} d\alpha \\
&&-2A\int\limits^{\alpha } \left\{\left[ \frac{1}{8}\frac{%
\gamma _{n}^{2}}{\sigma _{n}}+\sigma _{n}^{2}\int\limits^{\alpha }\sin
\theta _{n}\text{ }d\alpha +\frac{\sigma _{n}}{2}\left( c_{n}\cos \theta
_{n}-\func{Re}\left( W_{n}^{\ast }T_{n}\right) \right) ^{2}\right] \right\}
d\alpha .
\end{eqnarray*}
Given that $\left\Vert \theta _{n}\right\Vert _{H_{\text{per}}^{2}}$ is bounded, we
have that%
\begin{equation*}
S\left( -\frac{\partial _{\alpha }^{2}\theta _{n}}{\sigma _{n}^{2}}%
-\int\limits^{\alpha }\frac{\left( \partial _{\alpha }\theta _{n}\right)
^{3}}{2\sigma ^{2}}\text{ }d\alpha +\tau _{1}\theta _{n}\right)
\end{equation*}%
is bounded in $L_{\text{per}}^{2}$, as is the ``mass" term%
\begin{equation*}
2\widetilde{A}\sigma _{n}\int\limits^{\alpha }\cos \theta _{n} \,d\alpha .
\end{equation*}%
Thus, 
\begin{eqnarray}
&& \int\limits^{\alpha } \left( c_{n}\cos \theta _{n}-\func{Re}\left(
W^{\ast }T\right) \right) \gamma _{n} d\alpha \label{bounded_in_L2_for_outcome_f_results2} \\
&&-2A \int\limits^{\alpha } \left\{ \left[ \frac{1}{8}\frac{\gamma
_{n}^{2}}{\sigma _{n}}+\sigma _{n}^{2}\int\limits^{\alpha }\sin \theta _{n}%
\text{ }d\alpha +\frac{\sigma _{n}}{2}\left( c_{n}\cos \theta _{n}-\func{Re}%
\left( W_{n}^{\ast }T_{n}\right) \right) ^{2}\right] \right\} d\alpha \notag
\end{eqnarray}%
must be bounded in $L_{\text{per}}^{2}$. \ 

Now, assume that $A=0$. \ With this
assumption, we are left with the conclusion that%
\begin{equation*}
\int\limits^{\alpha }\left( c_{n}\cos \theta _{n}-\func{Re}\left( W^{\ast
}T\right) \right) \gamma _{n} \, d\alpha
\end{equation*}
is bounded in $L_{\text{per}}^{2}$. \ Again, Lemma 5 of \cite{AmbroseStraussWrightGlobBifurc}
shows $\func{Re}\left( W^{\ast }T\right) $ is bounded in $H_{\text{per}}^{1}$, so
since $\gamma _{n}$ is also assumed to be bounded in $H_{\text{per}}^{1}$, we have
that%
\begin{equation*}
\int\limits^{\alpha }\func{Re}\left( W^{\ast }T\right) \gamma _{n}\,d\alpha
\end{equation*}%
is bounded in $H_{\text{per}}^{2}$, and hence is also bounded in $L_{\text{per}}^{2}$. \ Thus,%
\begin{equation*}
\int\limits^{\alpha }c_{n}\cos \left( \theta _{n}\right) \gamma _{n} \,d\alpha
\end{equation*}
is subsequently bounded in $L_{\text{per}}^{2}$ as well. \ Since $c_{n}\rightarrow \infty \,$, this forces%
\begin{equation*}
\int\limits^{\alpha }\gamma _{n} \,d\alpha \rightarrow 0 \;\left(\text{in } L_{\text{per}}^{2} \right).
\end{equation*}
However, we can write $\gamma _{n}=\gamma _{1,n}+\overline{%
\gamma }$, where $\gamma _{1,n}$ has mean zero. \ Since 
\begin{equation*}
\int\limits^{%
\alpha }\gamma _{1,n}d\alpha \rightarrow 0 \;\left(\text{in } L_{\text{per}}^{2} \right),
\end{equation*}
we need
\begin{equation*}
\int\limits^{\alpha }\overline{\gamma }d\alpha = \overline{\gamma }\alpha \rightarrow 0 \;\left(\text{in } L_{\text{per}}^{2} \right).
\end{equation*}
This forces $\overline{\gamma }=0$.

Now, suppose $A\neq 0$. \ Divide (\ref{bounded_in_L2_for_outcome_f_results2}%
) by $c_{n}^{2}$:%
\begin{eqnarray}
&&\int\limits^{\alpha } \left( \frac{1}{c_{n}}\cos \theta _{n}-\frac{1%
}{c_{n}^{2}}\func{Re}\left( W^{\ast }T\right) \right) \gamma _{n}d\alpha \label{divided_by_cn2_2} \\
&&-2A \int\limits^{\alpha } \left\{ \left[ \frac{1}{8}\frac{\gamma _{n}^{2}}{c_{n}^{2}\sigma _{n}}+\frac{\sigma _{n}^{2}%
}{c_{n}^{2}}\int\limits^{\alpha }\sin \theta _{n}\text{ }d\alpha +\frac{%
\sigma _{n}}{2c_{n}^{2}}\left( c_{n}\cos \theta _{n}-\func{Re}\left(
W_{n}^{\ast }T_{n}\right) \right) ^{2}\right] \right\} d\alpha . \notag
\end{eqnarray}%
Since (\ref{bounded_in_L2_for_outcome_f_results2}) is bounded in $L_{\text{per}}^{2}$, we
have that (\ref{divided_by_cn2_2}) must approach $0$. \ Examining each term
of (\ref{divided_by_cn2_2}), we see that all but possibly%
\begin{equation*}
\int\limits^{\alpha }\frac{\sigma _{n}}{2c_{n}^{2}}\left( c_{n}\cos \theta
_{n}-\func{Re}\left( W_{n}^{\ast }T_{n}\right) \right) ^{2}d\alpha
\end{equation*}%
clearly approach $0$. \ After expanding the integrand, we see that all terms
would in fact approach zero on their own merit except for the leading-order
(in $c_{n}$) term%
\begin{equation*}
\int\limits^{\alpha }\frac{\sigma _{n}}{2c_{n}^{2}}c_{n}^{2}\cos ^{2}\theta
_{n}d\alpha =\frac{1}{2}\int\limits^{\alpha }\sigma _{n}\cos ^{2}\theta
_{n}d\alpha .
\end{equation*}%
Since all other terms of (\ref{divided_by_cn2_2}) approaches zero, this forces 
\begin{equation*}
\frac{1}{2}\int\limits^{\alpha }\sigma _{n}\cos ^{2}\theta _{n}d\alpha
\rightarrow 0\text{.}
\end{equation*}%
Thus,%
\begin{equation*}
\sigma _{n}\cos ^{2}\theta _{n}\rightarrow 0\text{,}
\end{equation*}%
\newline
which is a contradiction, since $\left\vert \cos \theta _{n}\right\vert
\rightarrow 1$ from earlier. \ 

In whole, we have a contradiction between the statements (i) $\left\vert
c_{n}\right\vert \rightarrow \infty $ but $\left\Vert \theta _{n}\right\Vert
_{H_{\text{per}}^{2}}+\left\Vert \gamma _{1,n}\right\Vert _{H_{\text{per}}^{1}}$ is
bounded,\ and (ii) either $\overline{\gamma }\neq 0$ or $A\neq 0$. \ Thus,
if we assume either $\overline{\gamma }\neq 0$ or $A\neq 0$ and $\left\vert
c_{n}\right\vert \rightarrow \infty $, we must necessarily have $\left\Vert
\theta _{n}\right\Vert _{H_{\text{per}}^{2}}+\left\Vert \gamma _{1,n}\right\Vert
_{H_{\text{per}}^{1}}$ additionally unbounded; hence, in this case, outcome (f)
implies other outcomes as in the \cite{AmbroseStraussWrightGlobBifurc}. \ But, if both $%
A=0$ and $\overline{\gamma }=0$, we do not exclude the possibility $\left\vert
c_{n}\right\vert \rightarrow \infty $ but $\left\Vert \theta _{n}\right\Vert
_{H_{\text{per}}^{2}}+\left\Vert \gamma _{1,n}\right\Vert _{H_{\text{per}}^{1}}$ is bounded, so we
list (f) as another possible outcome.

With the case $\left\vert c_{n}\right\vert \rightarrow \infty $ handled, we now turn our attention to the case in which $\left\Vert \theta
_{n}\right\Vert _{H_{\text{per}}^{2}}$ (the second term of (\ref{diverging_norms})) diverges. \ This means that one of $\theta
_{n}$, $\partial _{\alpha }\theta _{n}$, or $\partial _{\alpha }^{2}\theta
_{n}$ diverge. \ Recall that $\kappa \left( \alpha \right) =\partial
_{\alpha }\theta \left( \alpha \right) /\sigma $, so $\partial _{\alpha
}\kappa \left( \alpha \right) =\partial _{\alpha }^{2}\theta \left( \alpha
\right) /\sigma $. \ Since 
\begin{equation*}
2\pi \overline{\cos \theta }=\int\nolimits_{0}^{2\pi }\cos \left( \theta
\left( \alpha ^{\prime }\right) \right) d\alpha ^{\prime }=\frac{M}{\sigma },
\end{equation*}%
we cannot have $\sigma _{n}\rightarrow 0$. \ Since $\sigma _{n}$ is bounded
above as well, then if $\partial _{\alpha }^{2}\theta _{n}$ diverges, then
so does the derivative of curvature. \ This is outcome (b). \ 

Finally, it is shown in \cite{AmbroseStraussWrightGlobBifurc} that $\left\Vert \gamma _{1,n}\right\Vert
_{H_{\text{per}}^{1}}\rightarrow \infty $ implies either outcome (a) or outcome (c). \ 

Note that if curvature or
the jump of the tangential component of fluid velocity themselves are
arbitrarily large, then (respectively) (b) or (c) occur as well, so we omit
these as separate outcomes. \ 
\end{proof}

In the next section, we proceed to numerically compute some of these diverse
solution curves.

\section{Numerical methods and results}

\subsection{Methods}

For our numerical computations, we employ methods very similar to those in \cite{AkersAmbroseWrightTravelingWavesSurfaceTension}.  Our computations use the version of our equations
given by (\ref{tan_equation_for_numerical}) and (\ref{normal_equation_new2}%
). \ We specify the horizontal domain width $M=2\pi $ ($\alpha \in \left[
-\pi ,\pi \right] $), and project each of $\theta ,\gamma $ onto a
finite-dimensional Fourier space:%
\begin{equation*}
\theta \left( \alpha \right) =\sum\limits_{k=-N}^{k=N}a_{k}\exp \left(
ik\alpha \right) ,\text{ \ \ \ \ \ }\gamma \left( \alpha \right)
=\sum\limits_{k=-N}^{k=N}b_{k}\exp \left( ik\alpha \right) .
\end{equation*}%
As in the formulation of our problem, we work with odd, real $\theta $ and
even, real $\gamma $. \ This forces $a_{-k}=-a_{k}$ (so $a_{0}=0$) and $%
b_{-k}=b_{k}$ (and clearly $b_{0}=\overline{\gamma }$). \ Thus, with $%
\overline{\gamma }$ specified \textit{a priori}, we can see that a
traveling wave solution $\left( \theta ,\gamma ;c\right) $ is determined by
the $2N$ coefficients $a_{1},\dots ,a_{N},b_{1},\dots ,b_{N}$, along with
the wave speed $c$. \ To solve for these $2N+1$ values, we project both
sides of equations (\ref{tan_equation_for_numerical}), (\ref%
{normal_equation_new2}) onto each basis element $\exp \left( ik\alpha
\right) $, $1\leq k\leq N$. \ This yields a system of $2N$ algebraic
equations; to complete the system, we include another equation that allows
us to specify the amplitude of the solution.

The most difficult, non-obvious portion of the computation of these
algebraic equations is, perhaps, the computation of the Birkhoff-Rott
integral $W^{\ast }$ in Fourier space. \ However, as in \cite{AmbroseStraussWrightGlobBifurc} (and what was used in our ``identity plus compact"
formulation (\ref{theta_equation_new}), (\ref{gamma_equation_new})), we can write $W^{\ast }$ as the sum%
\begin{equation*}
W^{\ast }=\frac{1}{2}H\left( \frac{\gamma }{z_{\alpha }}\right) +\mathcal{K} \left[ z%
\right] \gamma \text{,}
\end{equation*}%
where, as before, $H$ is the Hilbert transform, and the remainder $\mathcal{K}\left[ z%
\right] \gamma $ can be explicitly written as%
\begin{eqnarray*}
&&\mathcal{K}\left[ z\right] \gamma \left( \alpha \right) =  \\
&&\frac{1}{4\pi i}\func{PV}\int\nolimits_{0}^{2\pi }\gamma \left( \alpha
^{\prime }\right) \left[ \cot \left( \frac{1}{2}\left( z\left( \alpha
\right) -z\left( a^{\prime }\right) \right) \right) -\frac{1}{\partial
_{\alpha ^{\prime }}z\left( \alpha ^{\prime }\right) }\cot \left( \frac{1}{2}%
\left( \alpha -\alpha ^{\prime }\right) \right) \right] d\alpha ^{\prime }.
\end{eqnarray*}%
The Hilbert transform $H$ is easily computed in Fourier space, as
\begin{equation*}
\widehat{H\mu }\left( k\right) =-i \func{sgn} \left( k\right) \widehat{\mu }\left(
k\right).
\end{equation*} 
The remainder $\mathcal{K} \left[ z\right] \gamma $ is computed with the
trapezoid rule, in an ``alternating" sense (i.e. to evaluate this integral at
an ``even" grid point, we sum the ``odd" nodes, and vice-versa).

\begin{figure}[tp]
\centerline{\includegraphics[width=0.5\textwidth]{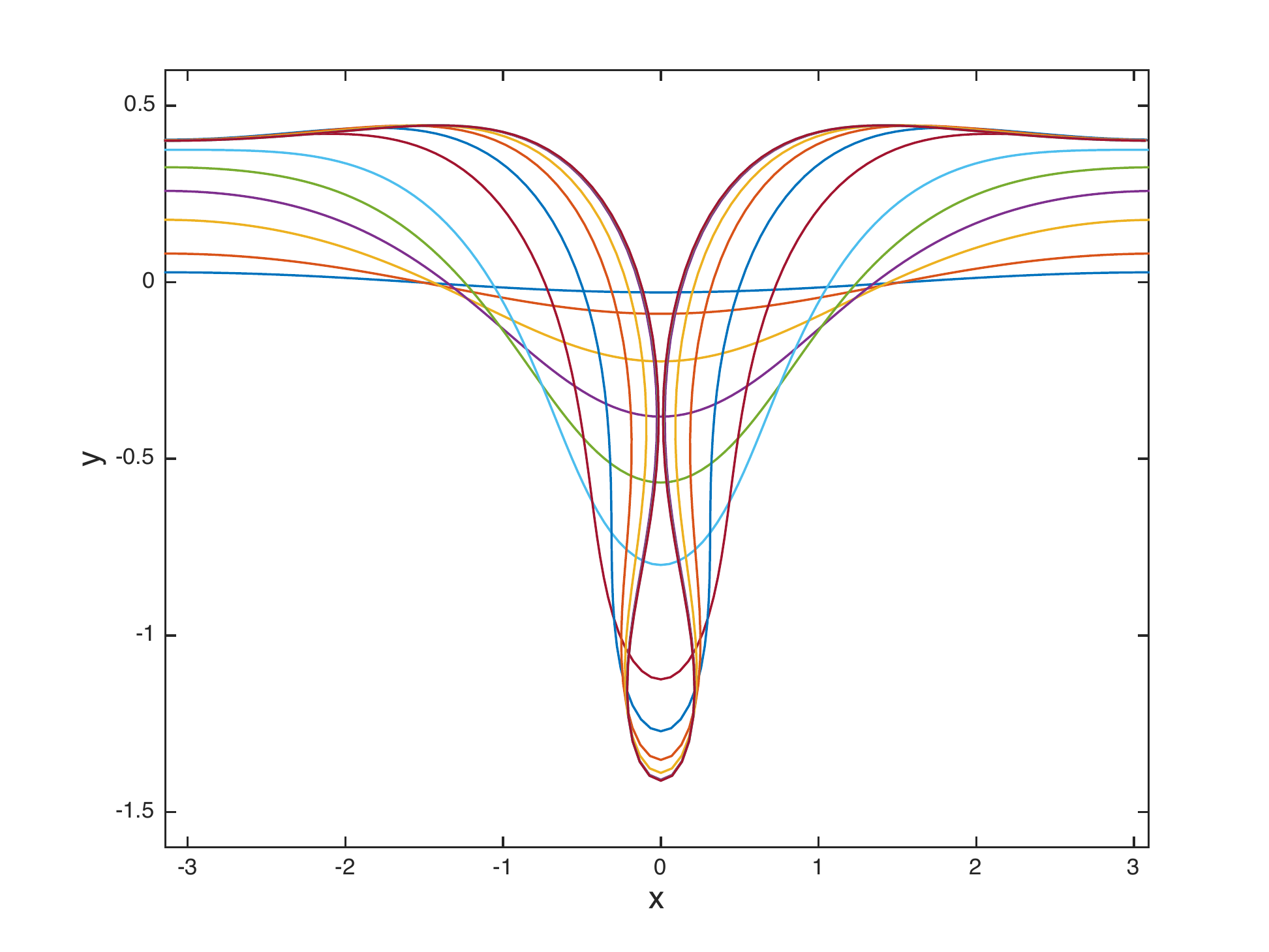}\includegraphics[width=0.5\textwidth]{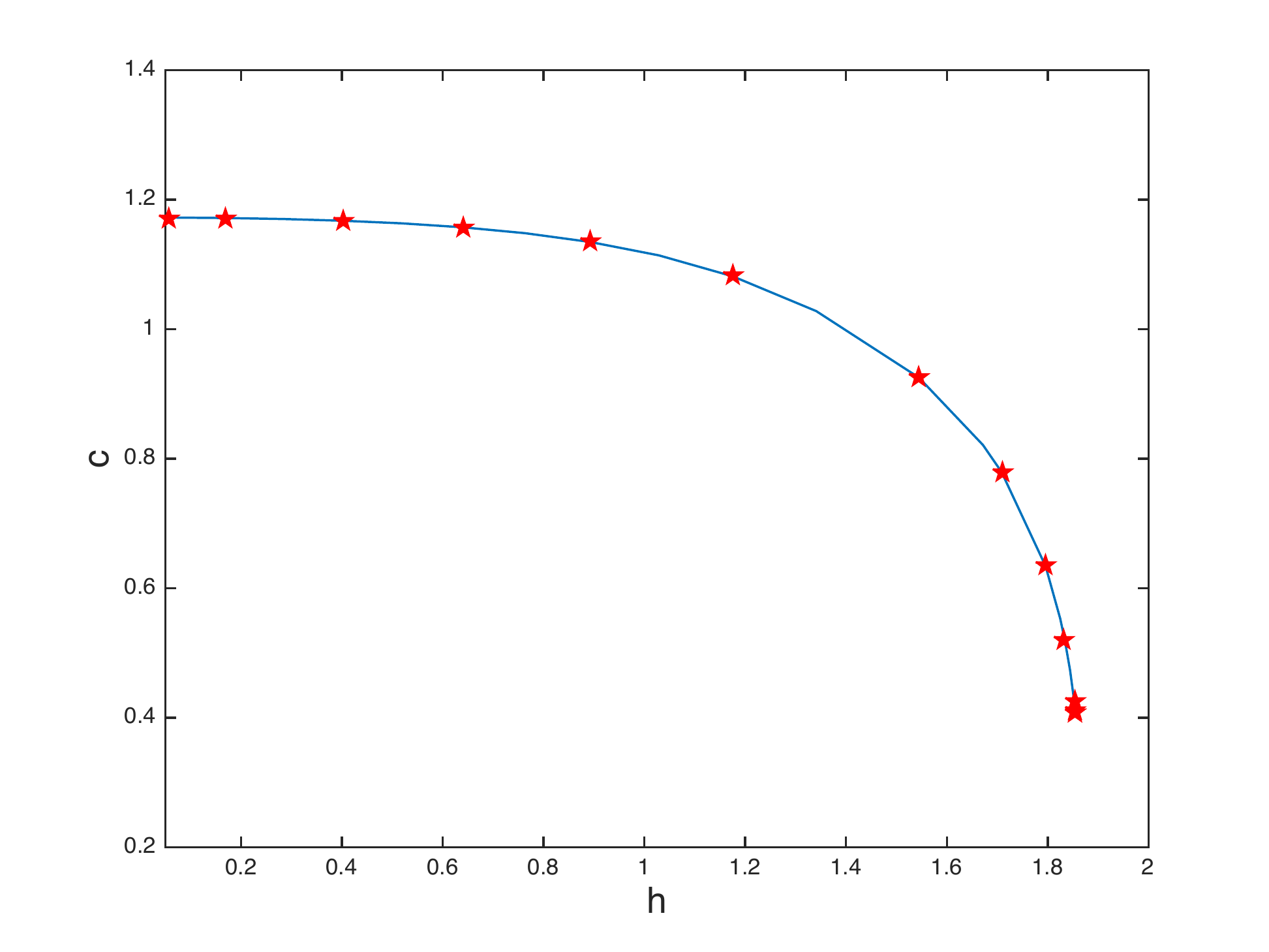}}
\caption{\it \small  An example computation of an entire branch of traveling waves, with $S=0.25$, $\tau=2$, $A=1$ and $\tilde{A}=0.2$.  A sampling of wave profiles at different locations on the branch are depicted in the left panel.  These profiles are marked with stars on the speed amplitude curve in the right panel.  The branch terminates with a self-intersecting profile. \label{BranchFig}}
\end{figure}

We proceed to numerically solve these $2N+1$ algebraic equations with
Broyden's method (a quasi-Newton method which approximates the Jacobian of
the system with a rank-one update to the Jacobian at the previous iteration; see
\cite{Broyden}). \ A small-amplitude solution to the
linearized equations (\ref{linearized_equations}) (with linear wave speed $%
c_{\pm }$ given by (\ref{c_pm})) is used as the initial guess for Broyden's
method, where amplitude (displacement) is specified as the $y$-coordinate of
the free-surface at the central node $x=0$. \ After iterating to a solution
within a desired tolerance, we record the solution. \ 

Then, as is typically done in these types of continuation methods, we look
for more solutions along the same branch by perturbing the previously computed
solution by a small amount in some direction, then using this perturbation as an initial guess for
the next application of Broyden's method.  The perturbation direction is called the continuation parameter. \  We begin using total displacement as the
continuation parameter. \ If a given ``step size" of displacement does not yield
convergence, then we choose a smaller (i.e. halve the previous step size)
perturbation from the last known solution as an initial guess. \ However, if
the step size drops below a given threshold, we switch to using a Fourier mode as our continuation parameter.  \ If the step size for this
continuation process becomes to small, we subsequently continue in higher
Fourier modes.

We follow a branch of solutions until the solution self-intersects (i.e.
outcome (d) of Theorem \ref{main_theorem}), returns to the trivial
solution (i.e. outcome (e)), or becomes to large to resolve (evidence of outcome (a)). \  After any such termination criterion is achieved, we
cease the continuation process, and record all solutions along the branch.  An example of a computed branch of waves which terminates in self-intersection is in Figure \ref{BranchFig}.

\subsection{Results}

\begin{figure}[tp]
\centerline{\includegraphics[width=0.5\textwidth]{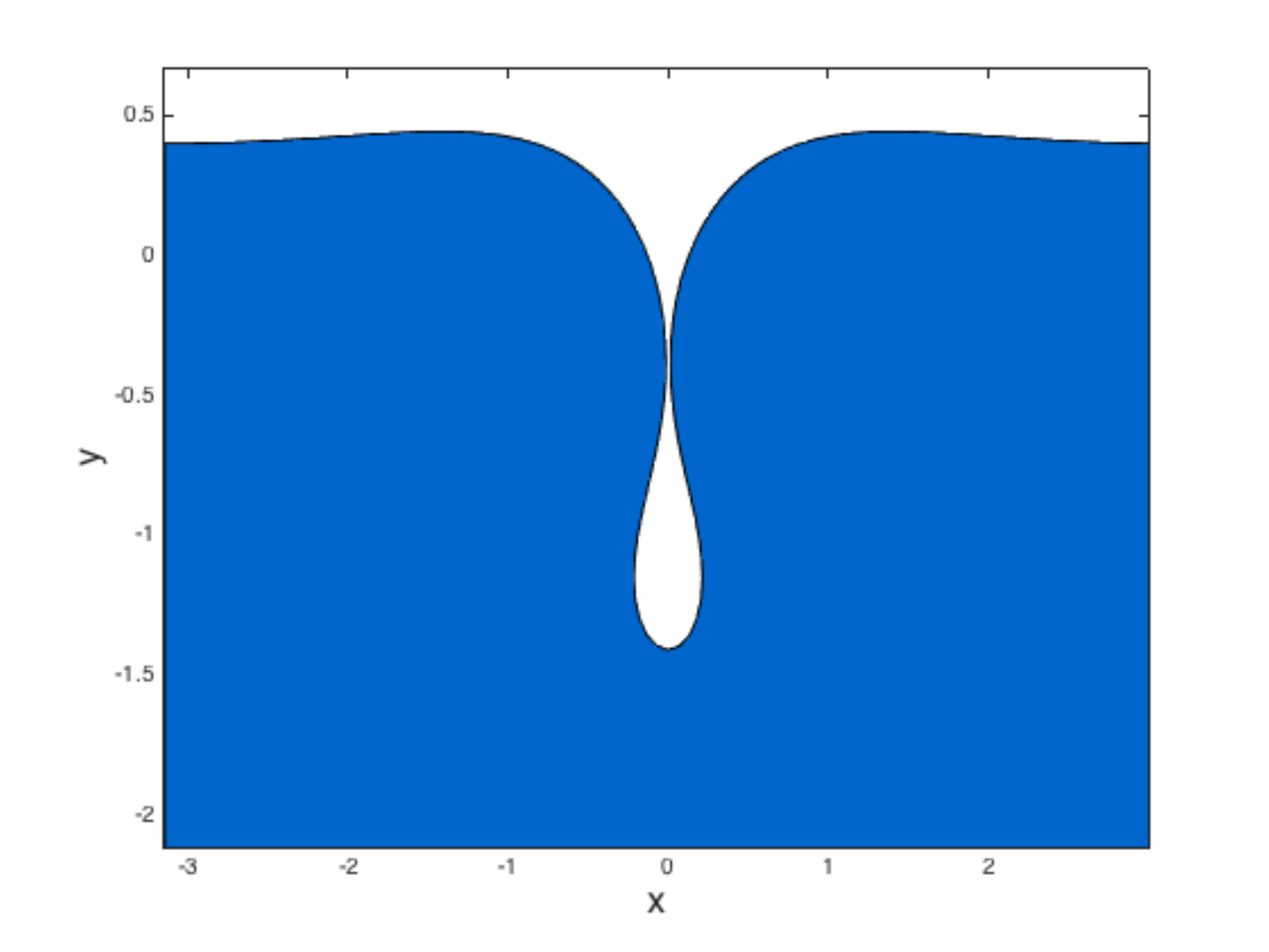}\includegraphics[width=0.5\textwidth]{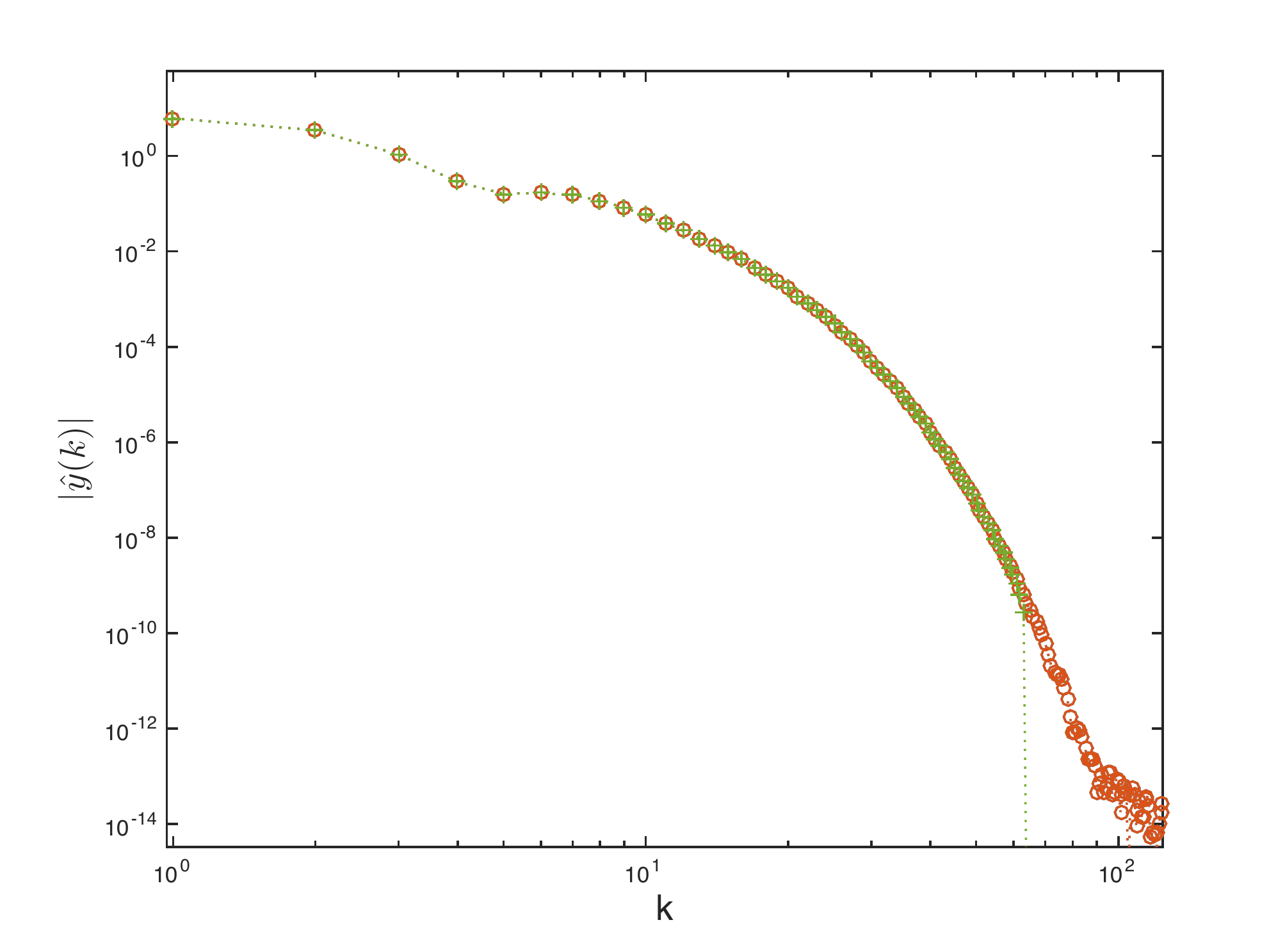}}
\caption{\it An example of a profile just before the self-intersecting configuration, for the branch of waves in Figure \ref{BranchFig}.  The wave is depicted in the left panel.  The right panel depicts the Fourier modes of the displacement, when $N=128$ points, marked with green plus signs, and $N=256$ points, marked with orange circles. \small \label{SelfFig}}
\end{figure}

In this section we present computations of global branches of traveling hydroelastic waves.  Global branches are computed which terminate in a self-intersecting profile, as well as branches whose most extreme representative is a standing wave, $c=0$.  We pay particular focus to the role of $\tilde{A}$, a proxy for the mass of the ice sheet.  This parameter was chosen as it appears only in the nonlinearity; the linear speeds and infinitesimal profiles are independent of $\tilde{A}$.

The quasi-Newton iteration described in the previous section uses an error threshold of $10^{-9}$, approximately the size of the floating point errors in approximating the derivatives in the Jacobian matrix of the quasi-Newton iteration via finite difference approximations.  The error is defined as the infinity norm of the Fourier modes of the projection of (\ref{tan_equation_for_numerical}) and (\ref{normal_equation_new2}). The bulk of the numerical results use $N=128$ points to discretize the interval of the pseudo-arclength $\alpha\in [0,2\pi)$.  When $N=128$ the most extreme waves computed have Fourier modes which decay to approximately this threshold, thus the choice of our discretization and error threshold are self-consistent.  As a check to see that the waves at this resolution are resolved to the reported threshold, we also computed a single, representative branch at $N=256$.  The waves profiles agree within the expected threshold; the Fourier modes of the extreme wave on this branch are reported at both resolutions are in the right panel of Figure \ref{SelfFig}.  The profile used for this comparison is reported in the left panel of Figure \ref{SelfFig}.

From the perspective of the global bifurcation theorem, self-intersecting waves result in a branch terminating at finite amplitude, case (d) of the theorem.  Standing waves signify a return to trivial, case (e) of the theorem.  At a standing wave a branch of waves with positive speed is connected to a branch of traveling waves with negative speed.  This setup can equally be interpreted as a branch which begins at one flat state configuration with one speed, and terminates at another 
flat configuration with a different speed.  

\begin{figure}[tp]
\centerline{\includegraphics[width=0.5\textwidth]{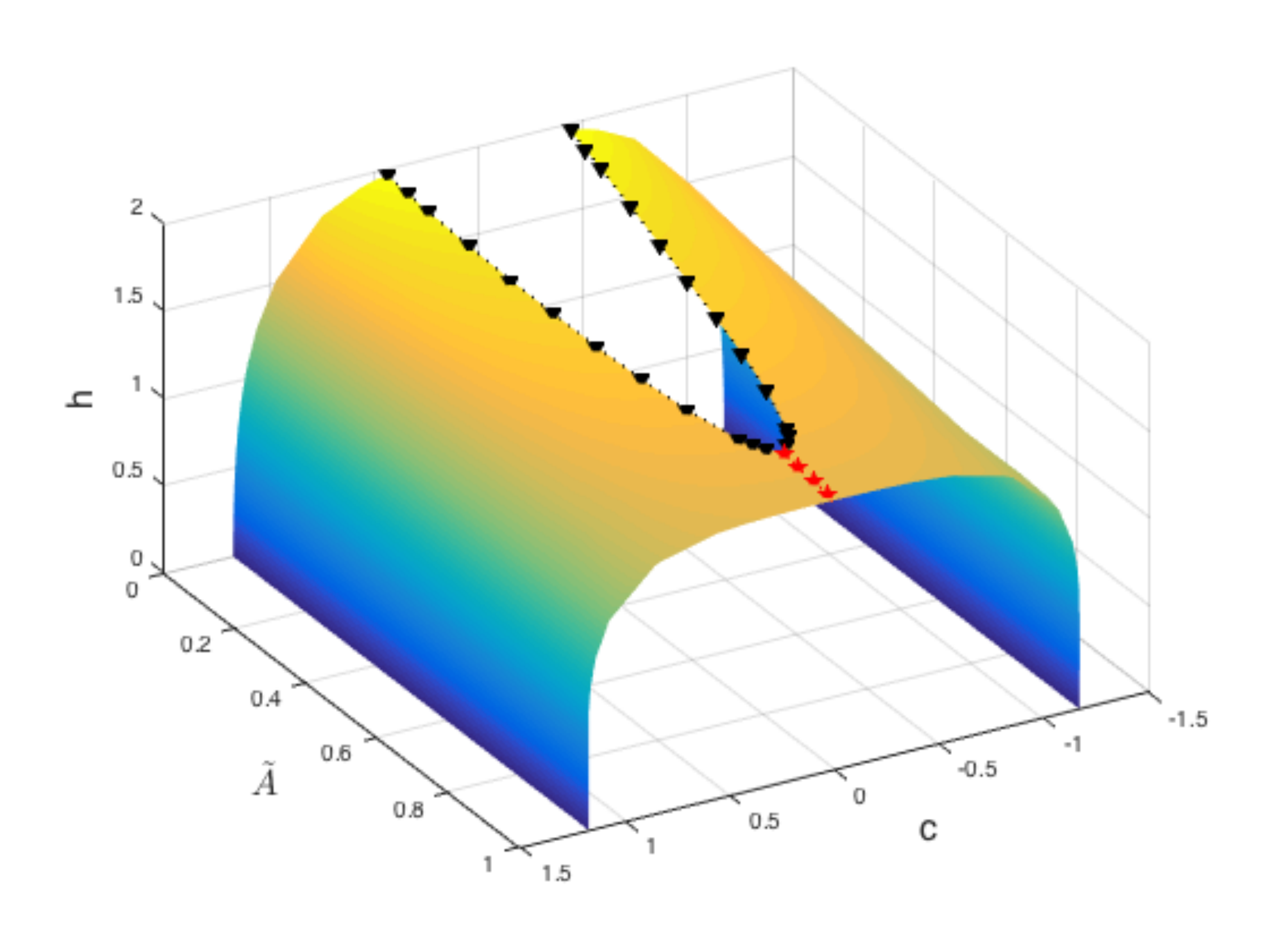}\includegraphics[width=0.5\textwidth]{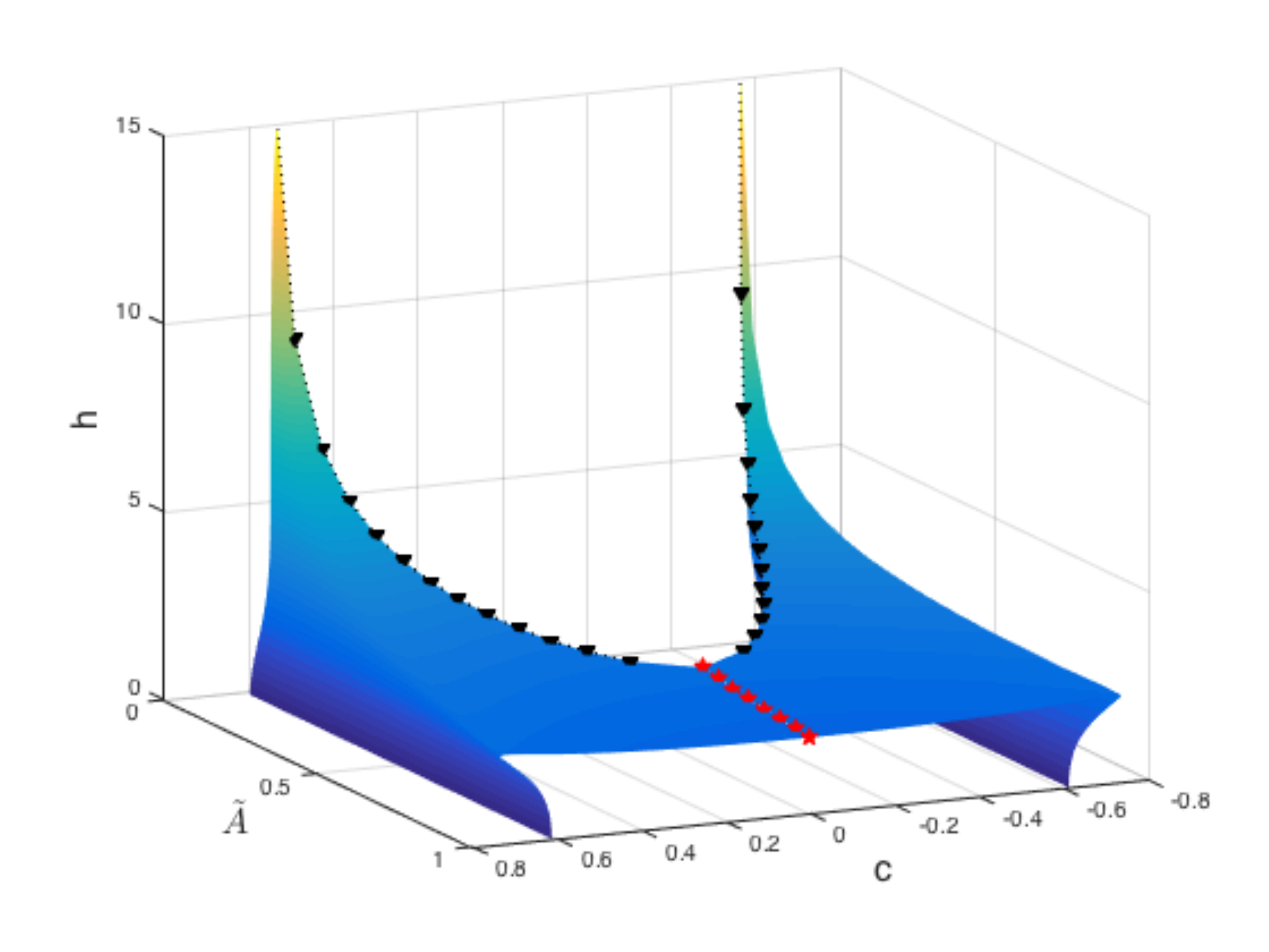}}
\caption{\it \small Two examples of bifurcation surfaces are depicted in the parameter space of $\tilde{A},c$, and the total interface displacement $h=\max(y)-\min(y)$. These waves were computed with $S=0.25$, $\tau=2$ and $A=1$ (left panel) and $A=0$ (right panel).  Branches with small $\tilde{A}$, in the back of the figure, terminate in self-intersecting waves, whose locations are marked with black triangles. In both cases, there is a critical $\tilde{A}$, corresponding to a switch from branches which terminate in self-intersecting waves to those which end in standing waves, whose locations are marked with red stars. The standing waves mark the merger of the surfaces of positive and negative speeds. \label{SurfaceFig}}
\end{figure}

We have numerically computed two examples of bifurcation surfaces, composed of a continuous family of branches of traveling waves with varying $\tilde{A}$.  These bifurcations surfaces are presented in Figure \ref{SurfaceFig}.  As extreme examples, we computed bifurcations surfaces with $A=0$, the density-matched case, and $A=1$ where the upper fluid is a vacuum.  Each wave on these surfaces has the same values of $S=0.25$ and $\tau=2$.   We present these surfaces in the three-dimensional space of $c,$ $\tilde{A},$ and total displacement $h=\max(y)-\min(y)$.  We chose to compute the surfaces for varying $\tilde{A}$, because the linear wave speeds don't depend on $\tilde{A}$.  Thus the changes in the surface for different $\tilde{A}$ are fundamentally due to large-amplitude, nonlinear effects.  In both computed surfaces, ($A=0$ and $A=1$), we observe that for small $\tilde{A}$, branches of traveling waves terminate in self intersection. After some critical $\tilde{A}$, the extreme wave on a branch is a standing wave; the branches of waves with positive speed are connected to branches of waves with negative speed, a ``return-to-trivial'' global bifurcation.  

In addition to computing bounded branches of traveling waves, we observe evidence of an unbounded branch.  In the case where the fluid densities match $A=0$ and the interface has no mass $\tilde{A}=0$, we found no evidence of a largest wave.  Considering the limit as $\tilde{A}\rightarrow 0$, we observe a largest self intersecting wave with total displacement $h\sim \tilde{A}^{-1/2}$, suggesting that wave when $\tilde{A}\rightarrow 0$, the interfaces can be unboundedly large.  This behavior is depicted both in the right panel of Figure \ref{SurfaceFig} as well as in the left panel of Figure \ref{DecayFig}.  

In search of an unbounded branch of traveling waves, we computed a branch of traveling waves in the configuration, $(\tilde{A},A,S,\tau)=(0,0,0.25,2)$.  The results of this computation are in the center and right panels of figure \ref{DecayFig}.   In the center panel are examples of increasingly large profiles of traveling waves.  We observe no evidence of a largest profile, or any tendency toward self-intersection.  The speed's dependence on displacement if depicted in the right panel of Figure \ref{DecayFig}.  The speed limits on a finite value, as the profiles become arbitrarily large.  We consider this configuration an example of case (a) of the main theorem.

\begin{figure}[tp]
 \centerline{\includegraphics[width=.375\textwidth]{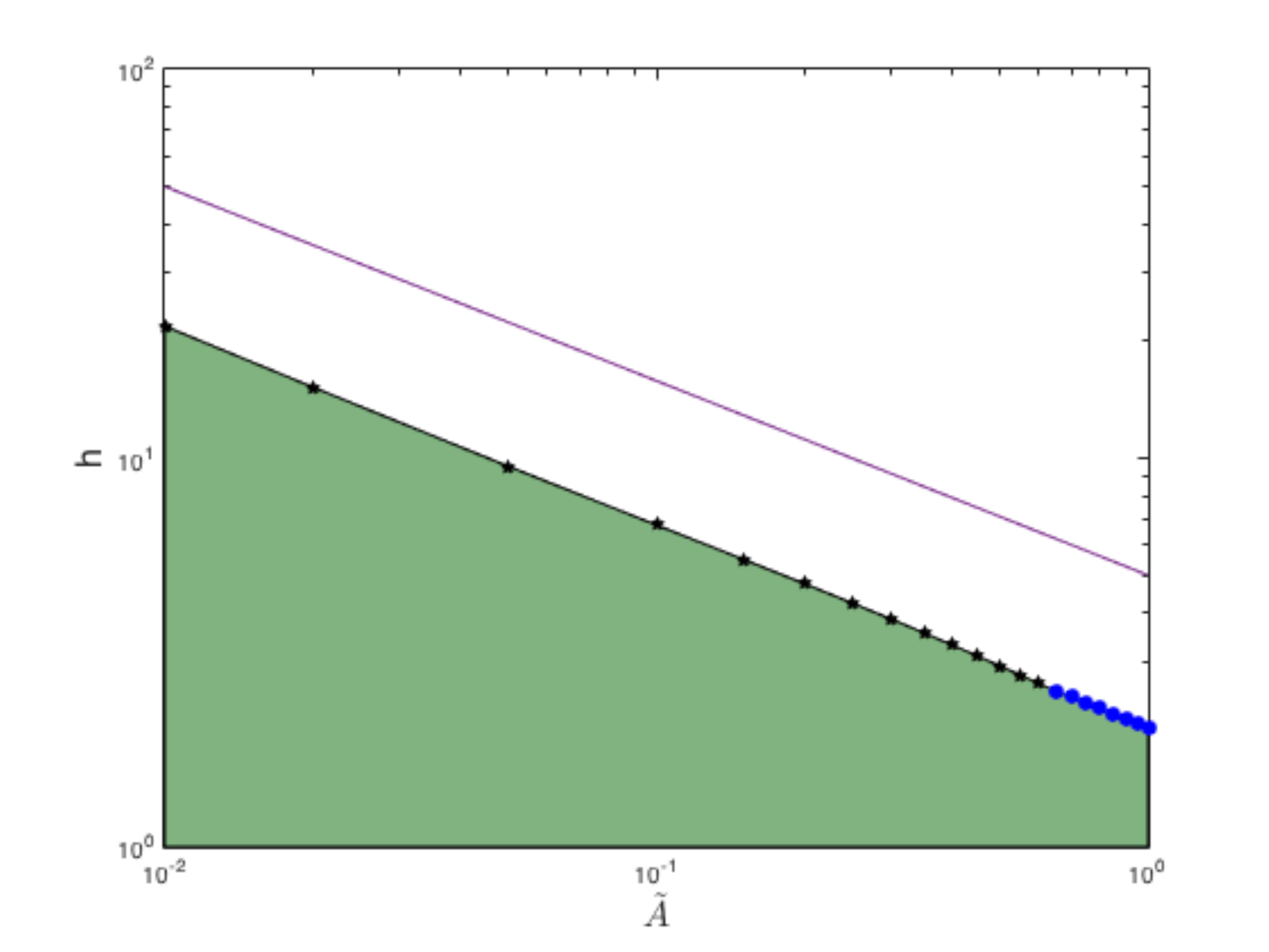}
\includegraphics[width=.375\textwidth]{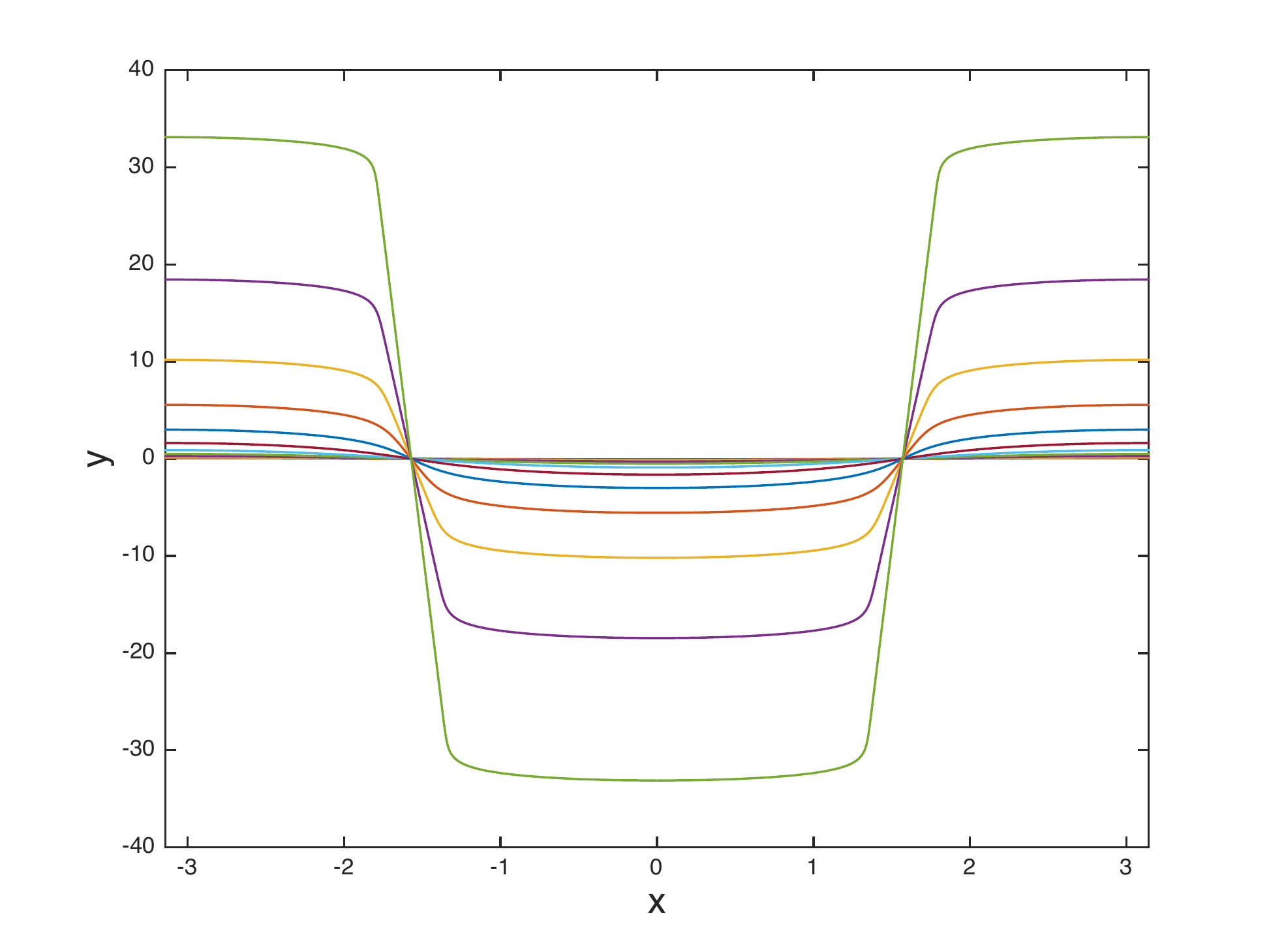}
\includegraphics[width=.375\textwidth]{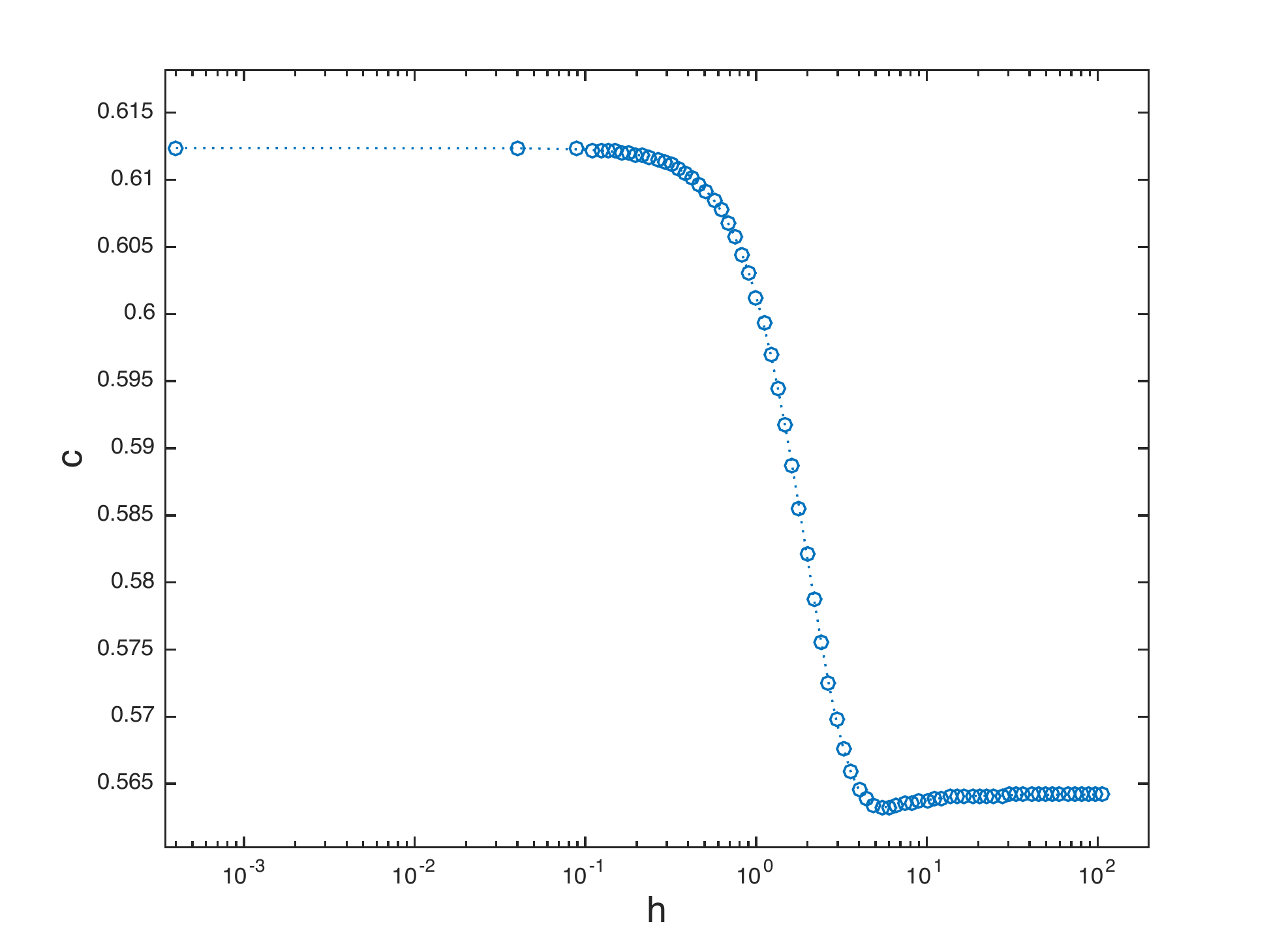}}
\caption{\it \small Numerical evidence of a branch of waves which contains interfaces of arbitrary length, case (a) of the main theorem. In the left panel, the displacement of the largest wave, $h$, for $A=0,S=0.25$, and $\tau=2$, is depicted as a function of $\tilde{A}$.  Numerical computations are marked, either self-intersecting waves or standing waves, using the marking convention of Figure \ref{SurfaceFig}.  As a guide, the value $h=5\tilde{A}^{-1/2}$ is marked with a solid line; waves exist in the shaded region. In the center are profiles computed at $(\tilde{A},A,S,\tau)=(0,0,0.25,2);$ no evidence of a largest profile was found.  On the right is the speed amplitude curve, with a logarithmic horizontal axis, for the same configuration as the center.  The speed limits on a finite value; thus, this is not case (f). \label{DecayFig}}
\end{figure}

\bibliography{references}{}
\bibliographystyle{plain}

\end{document}